\newtheorem{theorem}{Theorem}[section]
\newtheorem{corollary}[theorem]{Corollary}
\newtheorem{lemma}[theorem]{Lemma}
\newtheorem{remark}[theorem]{Remark}
\newcommand{\st}{\textnormal{s.t.}}
\newcommand{\diag}{\textnormal{diag}}
\newcommand{\Diag}{\textnormal{Diag}\,}
\newcommand{\argmin}{\mathop{\rm argmin}}
\newcommand{\LCal}{\mathcal{L}}
\newcommand{\QCal}{\mathcal{Q}}
\newcommand{\SCal}{\mathcal{S}}
\newcommand{\pres}{\textsf{pres}}
\newcommand{\dres}{\textsf{dres}}
\newcommand{\dgap}{\textsf{dgap}}
\newcommand{\pinfeas}{\textsf{pinfeas}}
\newcommand{\dinfeas}{\textsf{dinfeas}}
\newcommand{\x}{\mathbf x}
\newcommand{\y}{\mathbf y}
\newcommand{\s}{\mathbf s}
\newcommand{\sr}{\mathbf r}
\newcommand{\bc}{\mathbf c}
\newcommand{\bb}{\mathbf b}
\newcommand{\h}{\mathbf h}
\newcommand{\e}{\mathbf e}
\newcommand{\z}{\mathbf z}
\newcommand{\w}{\mathbf w}
\newcommand{\bl}{\mathbf l}
\newcommand{\su}{\mathbf u}
\newcommand{\sv}{\mathbf v}
\newcommand{\p}{\mathbf p}
\newcommand{\q}{\mathbf q}
\newcommand{\br}{\mathbb{R}}
\newcommand{\ba}{\begin{array}}
\newcommand{\ea}{\end{array}}
\newcommand{\etal}{{\it et al.\ }}
\newcommand{\indfunc}{{\mathbbm{1}}}
\newcommand{\indmat}{{\mathbbm{I}}}
\newcommand{\nn}{\nonumber}
\title{An ADMM-Based Interior-Point Method \\ for Large-Scale Linear Programming}
\author{
 Tianyi Lin
 \thanks{Department of Industrial Engineering and Operations Research, UC Berkeley, Berkeley, CA 94720, USA.} \and
 Shiqian Ma
 \thanks{Department of Mathematics, University of California, Davis, CA 95616, USA.} \and
 Yinyu Ye
 \thanks{Department of Management Science and Engineering, Stanford University, Stanford, CA 94305, USA. } \and
 Shuzhong Zhang
 \thanks{Department of Industrial and Systems Engineering, University of Minnesota, Minneapolis, MN 55455, USA, and Institute of Data and Decision Analytics, The Chinese University of Hong Kong, Shenzhen, 518172, China. }
}
\date{\today} 
\begin{document}
\maketitle

\begin{abstract}
In this paper, we propose a new framework to implement interior point method (IPM) in order to solve some very large scale linear programs (LP). Traditional IPMs typically use Newton's method to approximately solve a subproblem that aims to minimize a log-barrier penalty function at each iteration. Due its connection to Newton's method, IPM is often classified as {\it second-order method} -- a genre that is attached with stability and accuracy at the expense of scalability. Indeed, computing a Newton step amounts to solving a large system of linear equations, which can be efficiently implemented if the input data are reasonably-sized and/or sparse and/or well-structured. However, in case the above premises fail, then the challenge still stands on the way for a traditional IPM. To deal with this challenge, one approach is to apply the iterative procedure, such as preconditioned conjugate gradient method, to solve the system of linear equations. Since the linear system is different each iteration, it is difficult to find good pre-conditioner to achieve the overall solution efficiency. In this paper, an alternative approach is proposed. Instead of applying Newton's method, we resort to the alternating direction method of multipliers (ADMM) to approximately minimize the log-barrier penalty function at each iteration, under the framework of primal-dual path-following for a homogeneous self-dual embedded LP model. The resulting algorithm is an ADMM-Based Interior Point Method, abbreviated as \textsf{ABIP} in this paper. The new method inherits stability from IPM, and scalability from ADMM. Because of its self-dual embedding structure, \textsf{ABIP} is set to solve any LP without requiring prior knowledge about its feasibility. We conduct extensive numerical experiments testing \textsf{ABIP} with large-scale LPs from NETLIB and machine learning applications. The results demonstrate that \textsf{ABIP} compares favorably with other LP solvers including \textsf{SDPT3}, \textsf{MOSEK}, \textsf{DSDP-CG} and \textsf{SCS}.

\vspace{0.5cm}

\noindent {\bf Keywords:} Linear Programming, Homogeneous Self-Dual Embedding, Interior-Point Method, Central Path Following, ADMM, Iteration Complexity.
\end{abstract}

\vspace{0.3cm}

\begin{center}

{\large \sf This paper is dedicated to the special issue in memory of Professor Masao Iri}

\end{center}

\maketitle

\newpage

\section{Introduction}
{By and large, traditional interior point method (IPM) for linear program (LP) is based on solving a sequence of log-barrier penalty subproblems using Newton's method \cite{Iri-1986-multiplicative,Renegar-1988-polynomial,NN-1994,Ye-IPM-1997}.} It turns out that with a suitable penalty-parameter choice scheme, one step of Newton's method usually yields a very good initial solution for the next log-barrier penalty subproblem. {As a result, the crux of IPMs boils down to the computation of Newton steps, which requires to solve system of linear equations (e.g., Iri and Imai~\cite{Iri-1986-multiplicative}, Gill {\it et al.}~\cite{Gill-1986-Projected}, Renegar~\cite{Renegar-1988-polynomial} and Ye and Kojima~\cite{Ye-1987-recovering}).} On the surface of it, computing Newton's direction then amounts to the Cholesky decomposition of a large (often ill-conditioned) matrix or even computing its inverse, which can be done when the dimensions are not exceedingly high, or the input data are sparse and/or well-structured. In general however, computing Newton's direction is computationally expensive. Because of this connection to Newton's method, IPM is often classified as a second-order approach. Typical to the second-order methods, IPM is known to be stable and accurate, but computing Newton's direction remains a challenge when the problem is dense and large scale. On the other hand, recently there has been considerable research attention paid on the so-called first-order methods, in that no Newton direction is needed. One very successful first-order method is called alternating direction method of multipliers (ADMM), which is essentially a gradient-based algorithm for the dual of a linearly constrained optimization model; see, for example, \cite{Glowinski1975b, Lions1979, Gabay-83, Glowinski1989, Eckstein-1992-douglas} and the recent survey papers \cite{Boyd-2011-distributed,Eckstein2012}. It turns out that the ADMM is highly scalable; however, it may suffer from numerical instability and it may take overly many iterations to compute an accurate solution. A natural question thus arises: Can we combine the benefits from both campuses? This paper aims to provide an affirmative answer to the afore question.


Before moving further to that question, let us first mention a beautiful technique which resolved a difficulty that baffled researchers in the early days of the IPM.
The difficulty is that an IPM requires an interior feasible solution to begin with, but an LP may not even be feasible let alone availability of an interior feasible solution.
To tackle this, Ye {\em et al.}~\cite{Ye-1994-hsd} proposed 
a homogeneous self-dual (HSD) reformulation of the original LP, which contains all the information that one may possibly care to obtain: an optimal solution if it exists; or, if the problem is infeasible or unbounded, a certificate that proves the case. Interestingly, the HSD has a ready interior feasible solution, while an optimal solution is guaranteed to exist. A central path following algorithm was subsequently proposed in \cite{Ye-1994-hsd} to solve the HSD.
Later, Xu {\em et al.}~\cite{Xu-1996-simplified} proposed a simplified homogeneous self-dual model.
However, in either cases computing the Newton directions remains to be the chore.
In this paper, we propose to apply the ADMM to approximately solve the log-barrier penalty subproblems from the path-following scheme for the HSD. The resulting algorithm, ADMM-based IPM (abbreviated as \textsf{ABIP} henceforth), inherits advantages of both IPM and ADMM. It turns out that \textsf{ABIP} is robust, highly scalable, and achieves high accuracy. Moreover, as a benefit inherited from HSD, \textsf{ABIP} finds both primal and dual optimal solutions if they exist; otherwise it finds a certificate proving primal or dual infeasibility.

There have been lots of efforts to efficiently compute Newton's step in IPMs. Existing approaches adopted in IPM for computing Newton's step include sparse matrix factorization, Krylov subspace method and preconditioned conjugate gradient method (cf.~\cite{Davis-2006-direct, Gondzio-2012-interior, Gondzio2012, Fountoulakis2014, Benson2000, Cui-2019-Implementation}). {Very recently, Cui {\it et al.}~\cite{Cui-2019-Implementation} proposed a novel inner-iteration preconditioned Krylov subspace method which overcomes the severe ill-conditioning of linear equations solved in the final phase of interior-point iterations. Despite several progresses along this direction, the performance of these methods still highly depends on the structure of the problem and the input data and suffers from the curse of dimensionality.} For example, the system of linear equation data of the IPM is changing every iteration and becomes more and more ill-conditioned, so that it is typically hard to find a good pre-conditioner for the CG method. Moreover, often we have to solve more than one system of linear equations with different right-hand-side vectors. By investigating the structure of the problem in HSD, we discover that ADMM can be used to approximately solve the log-barrier penalty problems very efficiently. Though connected with the classical operator splitting methods in \cite{Gabay-83,Fortin-Glowinski-1983,Eckstein-1992-douglas}, renaissance of ADMM in recent years was due to its success in signal processing \cite{Combettes2007b,Yang2011}, image processing \cite{Goldstein2009a,Yang2010}, and machine learning problems; see a recent survey paper \cite{Boyd-2011-distributed} and the references therein.
An interesting and positive discovery through our study reported in this paper is twofold: 1) we find that the overall IPM strategy such as central-path following greatly improve the stability and robustness of the variable-splitting approach; 2) it turns out that the log-barrier penalty subproblem under the central path following scheme for the HSD is well-structured and can be efficiently solved by ADMM.

Recently, there are several attempts on designing first-order methods for solving LPs (and conic programs); see \cite{Wen-2010-alternating, Yang-2015-sdpnal+, Yen-2015-sparse, Wang-2017-New}. Without an HSD framework, such methods are not suited for primal or dual infeasible problems. 
More recently, the ADMM-based solvers for LP have been explored prior to our method, e.g., a split conic solver (\textsf{SCS}) developed by O'Donoghue \etal \cite{Donoghue-2016-conic}, which applies ADMM to solve the homogeneous self-dual embedding of the original LP. Therefore, \textsf{ABIP} is similar to \textsf{SCS} in that they are both based on ADMM and HSD. However, conceptually \textsf{ABIP} is built within the IPM framework, so that it can be used as an optional solver when the data is large and dense for any existing IPM solver. It is our hope that, by combining ADMM with interior-point strength, the solver would have an additional machinery to improve its solution robustness. For example, the scheme of \textsf{ABIP} can easily incorporate the notion of step-sizes and wide neighborhoods of the central path (cf.~\cite{Sturm-Zhang-1997}). The efficacy of \textsf{ABIP} is confirmed by our extensive numerical tests on large-scale LPs from NETLIB and machine learning applications. Finally, we remark that extending \textsf{ABIP} to a more general convex conic optimization setting is straightforward.

\subsection{Notation and Organization}
Throughout this paper, we denote vectors by bold lower case letters, e.g., $\x$, and matrices by regular upper case letters, e.g., $X$. The transpose of a real vector $\x$ is denoted as $\x^\top$. For a vector $\x$, and a matrix $X$, $\left\|\x\right\|$ and $\left\|X\right\|$ denote the $\ell_2$ norm and the matrix spectral norm, respectively. For two symmetric matrices $A$ and $B$, $A \succeq B$ indicates that $A-B$ is symmetric positive semi-definite. {The superscript, e.g., $\x^t$,} denotes iteration counter. $\log(\x)$ denotes the natural logarithm of $\x$. $\e$ denotes the vector of all ones. $e_j$ denotes the coordinate vector with $j$-th entry being 1. {$\indmat$ is an identity matrix with appropriate dimension.} For two vectors $\x$ and $\y$, the Hadamard product is denoted as $\x\circ\y = \left(x_1y_1, \ldots, x_n y_n\right)$.

The rest of the paper is organized as follows. In Section \ref{section2:background}, we discuss some background of homogeneous self-dual embedding. In Section \ref{Section3:Approach}, we propose our \textsf{ABIP} method for solving the homogeneous self-dual embedding with log barrier functions. We also discuss how \textsf{ABIP} can be simplified and reduced to a matrix-free algorithm. The iteration complexity of \textsf{ABIP} is also analyzed.
In Section \ref{section:implementation}, we propose several techniques that help improve the performance of \textsf{ABIP} in practice. In Section \ref{Section5:Experiment}, we present extensive numerical results on large-scale LPs from NETLIB and machine learning applications and compare with several existing LP solvers. We make some concluding remarks in Section \ref{Section6:Conclusion}.

\section{Homogeneous and Self-Dual Linear Programming}\label{section2:background}
We are interested in solving the following \textit{primal-dual} pair of linear programs (LP):
\begin{equation}\label{prob:LP}
\ba{llllll}
& \min & \bc^\top\x & & \max & \bb^\top\y \\
(P) \quad & \st  & A\x = \bb, & \quad\quad\quad (D) \quad & \st  & A^\top \y + \s = \bc, \\
\quad & & \x\geq 0,   & \quad\quad\quad\quad & & \s\geq 0,
\ea\end{equation}
where $\x\in\br^n$ is the primal variable, $\y\in\br^m$ and $\s\in\br^n$ are the dual variables, the problem data are $A\in\br^{m\times n}$, $\bb\in\br^m$ and $\bc\in\br^n$ with $m\leq n$, and without loss of generality, we assume that $A$ is of full row rank. The primal and dual optimal objective values are denoted as $p^*$ and $d^*$ respectively. 

In addition to the celebrated simplex method of Dantzig~\cite{Dantzig-1963} in 1940's, the interior point method (IPM), which was pioneered by Karmarkar~\cite{Karmarkar-1984} and intensively developed by many researchers in the 1980's and 1990's, has been a standard approach to solve linear program \eqref{prob:LP}.
In the early years of IPM, initial feasible interior solutions were assumed to be available at hand. Clearly, this assumption can be restrictive.
To address this issue specifically, Ye \etal \cite{Ye-1994-hsd} proposed to solve the following homogeneous and self-dual linear programming with arbitrary initial points $\x^0>0$, $\s^0>0$ and $\y^0$:
\begin{eqnarray}\label{HLP}
(HSD) \qquad
\begin{array}{crrrrrrrl}
\min & & & & & & ((\x^0)^\top\s^0+1)\theta & & \\
\st  & & A\x & - \bb\tau & & & + \bar{\bb}\theta & = & 0, \\
& -A^\top \y & & + \bc\tau & -\s & & -\bar{\bc}\theta & = & 0, \\
& \bb^\top \y & - \bc^\top \x &  &  & -\kappa & + \bar{\z}\theta & = & 0, \\
& -\bar{\bb}^\top \y & + \bar{\bc}^\top \x & -\bar{\z}\tau & & & & = & -(\x^0)^\top \s^0-1, \\
& \y \ \mbox{free}, & \x \geq 0, & \tau \geq 0, & \s \geq 0, & \kappa \geq 0, & \theta \mbox{ free}, & &
\end{array}
\end{eqnarray}
where
\begin{equation}\label{bar-b-c-z}
\bar{\bb}=\bb-A\x^0, \quad \bar{\bc}=\bc-A^\top \y^0-\s^0, \quad \bar{\z}=\bc^\top \x^0+1-\bb^\top \y^0.
\end{equation}
The HSD \eqref{HLP} has many nice properties. In the following we give a partial list (cf.~\cite{Ye-1994-hsd}).

\begin{theorem}[Theorem 2 in \cite{Ye-1994-hsd}]
The following holds for \eqref{HLP}:
\begin{enumerate}[(i)]
\item The optimal value of \eqref{HLP} is zero, and for any feasible point $(\y,\x,\tau,\theta,\s,\kappa)$, it holds
\begin{equation*}
((\x^0)^\top \s^0+1)\cdot\theta = \x^\top \s+\tau\kappa .
\end{equation*}
\item There is a feasible solution $(\y, \x, \tau, \theta, \s,\kappa)$ to \eqref{HLP} such that
\begin{equation*}
\y = \y^0, \ \x = \x^0, \ \tau = 1, \ \theta = 1, \ \s = \s^0, \ \kappa = 1.
\end{equation*}
\item There is an optimal solution $(\y^*, \x^*, \tau^*,\theta^*=0, \s^*, \kappa^*)$ such that $\x^*+\s^*>0$ and $\tau^*+\kappa^*>0$, which is called a strictly complementary solution.
\end{enumerate}
\end{theorem}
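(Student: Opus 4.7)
The plan is to prove the three items in order, exploiting the skew-symmetric structure of (HSD). Let me denote a generic feasible point as $(\y,\x,\tau,\theta,\s,\kappa)$ and the coefficient system in \eqref{HLP} as rows (R1)--(R4).

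For part (i), I would multiply each row by the ``dual'' variable that sits opposite to its right-hand side contribution and add them up: multiply (R1) by $\y^\top$, (R2) by $\x^\top$, (R3) by $\tau$, and (R4) by $\theta$. Because (HSD) is self-dual, the cross-terms cancel in pairs: $\y^\top A\x$ cancels against $\x^\top A^\top\y$, the $\bb^\top\y\,\tau$ terms cancel, the $\bar{\bb}^\top\y\,\theta$ terms cancel, the $\bc^\top\x\,\tau$ terms cancel, the $\bar{\bc}^\top\x\,\theta$ terms cancel, and $\bar{\z}\tau\theta$ cancels. What survives is precisely $-\x^\top\s-\tau\kappa+((\x^0)^\top\s^0+1)\theta=0$, which is the claimed identity. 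From the identity plus $\x,\s,\tau,\kappa\ge 0$, we get $\theta\ge 0$ for every feasible point, so the optimal value is at least zero. Combined with item (iii), which exhibits a feasible point with $\theta=0$, the optimum must equal zero.

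For part (ii), I would directly plug in $(\y^0,\x^0,1,1,\s^0,1)$ and verify each row using the definitions \eqref{bar-b-c-z}. Concretely: (R1) becomes $A\x^0-\bb+(\bb-A\x^0)=0$; (R2) becomes $-A^\top\y^0+\bc-\s^0-(\bc-A^\top\y^0-\s^0)=0$; (R3) becomes $\bb^\top\y^0-\bc^\top\x^0-1+(\bc^\top\x^0+1-\bb^\top\y^0)=0$; and (R4) reduces, after expanding $\bar{\bb},\bar{\bc},\bar{\z}$, to $-(\x^0)^\top\s^0-1$, matching the right-hand side. Nonnegativity of the specified components is immediate from $\x^0>0$ and $\s^0>0$. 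This step is purely mechanical.

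For part (iii), the task is to produce a strictly complementary optimal solution, i.e., one with $\x^*+\s^*>0$ and $\tau^*+\kappa^*>0$, and with $\theta^*=0$. The approach I would take is the Goldman--Tucker route: since (HSD) is a feasible LP (by (ii)) with a finite optimal value (by (i), since the objective is nonnegative for every feasible point), it has an optimal solution; and the Goldman--Tucker theorem guarantees that among the optimal solutions there is one with strict complementarity between the primal variables of (HSD) and the slacks of its dual. The decisive structural point, which is also the main obstacle, is that (HSD) is self-dual: its dual problem coincides with itself up to sign of the equalities, so ``primal'' and ``dual'' strict complementarity collapse into the single condition $\x^*+\s^*>0$, $\tau^*+\kappa^*>0$. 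I would justify the self-duality by forming the Lagrangian dual of \eqref{HLP} and matching constraints, noting that the free multiplier associated with (R4) produces the coefficient of $\theta$ in the primal and vice versa. Finally, at such an optimal solution the identity from (i) forces $((\x^0)^\top\s^0+1)\theta^* = \x^{*\top}\s^*+\tau^*\kappa^*$, which combined with $\theta^*=0$ (the optimal value is zero) yields the complementarity pattern $\x^*\circ\s^*=0$ and $\tau^*\kappa^*=0$ that, together with $\x^*+\s^*>0$ and $\tau^*+\kappa^*>0$, is exactly strict complementarity.
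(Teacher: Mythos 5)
The paper itself offers no proof of this theorem---it is quoted verbatim from Ye, Todd and Mizuno with a citation---so the only meaningful comparison is against the classical argument in that reference, which your proposal essentially reconstructs. Your part (i) identity (multiply the four rows by $\y^\top,\x^\top,\tau,\theta$ and cancel by skew-symmetry) and your part (ii) substitution both check out exactly, and your part (iii) correctly identifies the decisive structural facts: self-duality of \eqref{HLP} and the Goldman--Tucker strict-complementarity theorem.

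There is, however, one genuine logical gap: a circularity between your proofs of (i) and (iii). In (i) you establish only that the optimal value is $\geq 0$ and defer the matching upper bound to (iii), which ``exhibits a feasible point with $\theta=0$.'' But in (iii) you justify $\theta^*=0$ at the Goldman--Tucker solution by parenthetically invoking ``the optimal value is zero''---precisely the claim you were deferring. As written, neither statement is proved. The repair is standard and uses only ingredients you already have on the table: either (a) observe that self-duality forces the optimal value $z^*$ to satisfy $z^*=-z^*$ (the dual is the same program with negated objective, and strong duality applies since the problem is feasible and bounded below by your $\theta\geq 0$ observation), hence $z^*=0$ \emph{before} touching (iii); or (b) reverse the direction of your final implication in (iii): complementary slackness between the strictly complementary primal--dual pair, read through the self-dual identification, gives $\x^{*\top}\s^*=0$ and $\tau^*\kappa^*=0$ directly, and \emph{then} the identity of (i) yields $\theta^*=0$ and hence optimal value zero. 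A second, smaller point: the ``collapse'' of primal and dual strict complementarity into the single condition $\x^*+\s^*>0$, $\tau^*+\kappa^*>0$ is not automatic for a single optimal solution; the standard device is to take the Goldman--Tucker primal--dual pair, map the dual solution back to a primal optimal solution via self-duality, and average the two (the optimal set is convex), which produces one solution that is strictly complementary with itself. You should make that averaging step explicit.
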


\begin{theorem}[Theorem 3 in \cite{Ye-1994-hsd}]
Let $(\y^*,\x^*,\tau^*,\theta^*=0,\s^*,\kappa^*)$ be a strictly complementary solution for \eqref{HLP}. Then:
\begin{enumerate}[(i)]
\item (P) has a solution (feasible and bounded) if and only if $\tau^*>0$. In this case, $\x^*/\tau^*$ is an optimal solution for (P) and $(\y^*/\tau^*,\s^*/\tau^*)$ is an optimal solution for (D).
\item If $\tau^*=0$, then $\kappa^*>0$, which implies that $\bc^\top \x^*-\bb^\top \y^*<0$, i.e., at least one of $\bc^\top \x^*$ and $-\bb^\top \y^*$ is strictly less than zero. If $\bc^\top \x^*<0$ then (D) is infeasible; if $-\bb^\top \y^*<0$ then (P) is infeasible; and if both $\bc^\top \x^*<0$ and $-\bb^\top \y^*$ then both (P) and (D) are infeasible.
\end{enumerate}
\end{theorem}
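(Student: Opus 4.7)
The plan is to combine the conservation identity from Theorem 2(i) with the strict complementarity hypothesis, and then to handle part (i) by elementary scaling and part (ii) by Farkas' lemma.

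First I would set up the combinatorial structure at the optimum. Since $\theta^* = 0$, Theorem 2(i) gives $(\x^*)^\top\s^* + \tau^*\kappa^* = 0$, and nonnegativity forces both $\x^* \circ \s^* = 0$ and $\tau^*\kappa^* = 0$. Strict complementarity then refines this to a clean dichotomy: exactly one of $x_i^*, s_i^*$ is positive for each index $i$, and exactly one of $\tau^*, \kappa^*$ is positive. This dichotomy is precisely what separates the two parts.

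For part (i), suppose first that $\tau^* > 0$, so $\kappa^* = 0$. I would divide the first three rows of \eqref{HLP} by $\tau^*$, using $\theta^* = 0$: the first row yields $A(\x^*/\tau^*) = \bb$ with $\x^*/\tau^* \ge 0$, giving primal feasibility; the second yields $A^\top(\y^*/\tau^*) + \s^*/\tau^* = \bc$ with $\s^*/\tau^* \ge 0$, giving dual feasibility; the third collapses to $\bb^\top\y^* = \bc^\top\x^*$, producing zero duality gap and hence optimality by weak LP duality. For the converse, starting from a primal-dual optimal pair $(\hat\x,\hat\y,\hat\s)$ guaranteed by feasibility and boundedness of (P), I would construct the HSD-feasible tuple $(\hat\y, \hat\x, 1, 0, \hat\s, 0)$, whose objective value is zero and hence HSD-optimal. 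The delicate step — and the main obstacle — is then to conclude that $\tau^* > 0$ in the chosen strictly complementary solution: this requires invoking the LP strict complementarity theorem applied to HSD itself, namely that if a variable is positive in some optimal solution then it is positive in every strictly complementary optimal solution.

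For part (ii), the dichotomy above immediately gives $\kappa^* > 0$ when $\tau^* = 0$. Setting $\tau^* = \theta^* = 0$ in the third row of \eqref{HLP} yields $\bb^\top\y^* - \bc^\top\x^* = \kappa^* > 0$, so $\bc^\top\x^* - \bb^\top\y^* < 0$ and at least one of $\bc^\top\x^*$, $-\bb^\top\y^*$ is strictly negative. The first two rows reduce to $A\x^* = 0$ with $\x^* \ge 0$, and $A^\top\y^* = -\s^* \le 0$. If $\bc^\top\x^* < 0$, then $\x^*$ is a Farkas certificate of emptiness of $\{\y : A^\top\y \le \bc\}$, so (D) is infeasible; if $\bb^\top\y^* > 0$, then $\y^*$ is a Farkas certificate of emptiness of $\{\x \ge 0 : A\x = \bb\}$, so (P) is infeasible; both conclusions hold whenever both strict inequalities do. Once the algebraic reductions above are in place these Farkas arguments are immediate, so the only nontrivial ingredient in the whole proof is the LP strict complementarity invocation in the converse of part (i).
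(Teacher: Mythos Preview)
The paper does not actually prove this theorem: it is quoted verbatim as Theorem~3 of \cite{Ye-1994-hsd} and no proof is given in the present paper, so there is no in-paper argument to compare against. That said, your proposal is correct and is essentially the standard argument from \cite{Ye-1994-hsd}: the forward direction of (i) is the obvious scaling by $\tau^*$, the converse direction of (i) hinges (as you correctly identify) on the Goldman--Tucker strict-complementarity theorem applied to the self-dual LP \eqref{HLP} (the support of any strictly complementary optimal solution equals the union of supports of all optimal solutions, so $\tau=1$ in your constructed optimum forces $\tau^*>0$), and part (ii) is exactly the Farkas-certificate reading of the first three rows of \eqref{HLP} at $\tau^*=\theta^*=0$. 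There is no gap.
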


\begin{theorem}[Corollary 4 in \cite{Ye-1994-hsd}]
Let $(\bar{\y},\bar{\x},\bar{\tau},\bar{\theta}=0,\bar{\s},\bar{\kappa})$ be any optimal solution for \eqref{HLP}. If $\bar{\kappa}>0$, then either (P) or (D) is infeasible.
\end{theorem}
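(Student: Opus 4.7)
The plan is to show that under the hypothesis $\bar{\kappa}>0$, the optimal solution $(\bar{\y},\bar{\x},\bar{\tau},0,\bar{\s},\bar{\kappa})$ itself is already a Farkas-type infeasibility certificate for (P) or (D), so the argument amounts to replaying the proof of Theorem 3(ii) verbatim with $(\bar{\y},\bar{\x},\bar{\s},\bar{\kappa})$ in place of the strictly complementary solution. The only thing I really need is to deduce $\bar{\tau}=0$; once I have that, the HSD equations collapse to a standard Farkas system.

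First I would invoke Theorem 2(i). Since the optimal value of \eqref{HLP} is zero, $\bar{\theta}=0$, and hence the identity
\[
((\x^0)^\top\s^0+1)\bar{\theta}=\bar{\x}^\top\bar{\s}+\bar{\tau}\bar{\kappa}
\]
yields $\bar{\x}^\top\bar{\s}+\bar{\tau}\bar{\kappa}=0$. Nonnegativity of every term on the right forces $\bar{\x}^\top\bar{\s}=0$ and $\bar{\tau}\bar{\kappa}=0$, and combining the latter with $\bar{\kappa}>0$ gives $\bar{\tau}=0$.

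Next, I would substitute $\bar{\theta}=0$ and $\bar{\tau}=0$ into the HSD equalities. The first row collapses to $A\bar{\x}=0$ (with $\bar{\x}\ge 0$), the second to $A^\top\bar{\y}+\bar{\s}=0$ (with $\bar{\s}\ge 0$), and the third to $\bb^\top\bar{\y}-\bc^\top\bar{\x}=\bar{\kappa}>0$. In particular, at least one of $\bc^\top\bar{\x}<0$ or $\bb^\top\bar{\y}>0$ must hold.

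Finally I would conclude by Farkas-type reasoning in each case. If $\bc^\top\bar{\x}<0$, then any hypothetical dual feasible $(\y,\s)$ would satisfy $\bc^\top\bar{\x}=\bar{\x}^\top(A^\top\y+\s)=(A\bar{\x})^\top\y+\bar{\x}^\top\s\ge 0$, a contradiction; so (D) is infeasible. If instead $\bb^\top\bar{\y}>0$, then $A^\top\bar{\y}=-\bar{\s}\le 0$ and any primal feasible $\x\ge 0$ with $A\x=\bb$ would give $\bb^\top\bar{\y}=\x^\top A^\top\bar{\y}\le 0$, again a contradiction; so (P) is infeasible. I do not anticipate any real obstacle, since the statement is essentially a direct corollary of Theorem 3 combined with the complementarity identity of Theorem 2(i); the only subtlety is being careful that strict complementarity is not needed for this direction, only the fact that $\bar{\tau}\bar{\kappa}=0$ at optimality.
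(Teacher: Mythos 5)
Your proof is correct and follows essentially the same route as the cited source: the paper states this result without proof as Corollary 4 of \cite{Ye-1994-hsd}, and your argument (optimal value zero forces $\bar{\x}^\top\bar{\s}+\bar{\tau}\bar{\kappa}=0$, hence $\bar{\tau}=0$; then the HSD rows reduce to the Farkas system $A\bar{\x}=0$, $A^\top\bar{\y}+\bar{\s}=0$, $\bb^\top\bar{\y}-\bc^\top\bar{\x}=\bar{\kappa}>0$) is exactly the standard derivation underlying Theorem 3(ii). You are also right that strict complementarity is not needed here; only the complementarity identity from Theorem 2(i) is used.
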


\section{An ADMM-based Interior-Point Method}\label{Section3:Approach}

In \cite{Ye-1994-hsd}, Ye \etal proposed an $O\left(\sqrt{n}\log\left(\frac{1}{\varepsilon}\right)\right)$-iteration and $O\left(n^3\log\left(\frac{1}{\varepsilon}\right)\right)$-arithmetic operation interior point algorithm to solve \eqref{HLP}. However, like all interior-point methods, it requires solving a linear system at each iteration and therefore does not scale well for dense data. In this section, we propose our \textsf{ABIP} method which uses ADMM to solve the log-barrier penalty subproblems for HSD, and we show that the procedures of \textsf{ABIP} can be simplified. 
In this section, we also provide an iteration complexity analysis for \textsf{ABIP}.

\subsection{The ABIP Method}
For ease of presentation, we choose $\y^0=0$, $\x^0=\e$, and $\s^0=\e$, where $\e$ denotes the vector of all ones. By introducing a constant parameter $\beta>0$ and constant variables $\sr=0$ and $\xi=-(\x^0)^\top \s^0-1=-n-1$, \eqref{HLP} can be rewritten as
\begin{eqnarray}\label{HLP-re}
\begin{array}{ll}
\min & \beta(n+1)\theta + \indfunc(\sr=0) + \indfunc(\xi=-n-1) \\
\st  & Q\su = \sv, \\
& \y \mbox{ free}, \ \x \geq 0, \ \tau \geq 0,  \ \theta \mbox{ free}, \ \s\geq 0, \ \kappa\geq 0,
\end{array}
\end{eqnarray}
where
\[Q = \begin{bmatrix} 0 & A & -\bb & \bar{\bb} \\ -A^\top & 0 & \bc & -\bar{\bc} \\ \bb^\top & -\bc^\top & 0 & \bar{\z} \\ -\bar{\bb}^\top & \bar{\bc}^\top & -\bar{\z} & 0 \end{bmatrix}, \quad \su = \begin{bmatrix} \y \\ \x \\ \tau \\ \theta \end{bmatrix}, \quad \sv = \begin{bmatrix} \sr \\ \s \\ \kappa \\ \xi \end{bmatrix}, \]
\begin{equation}\label{bar-b-c-z-re}
\bar{\bb}=\bb-A\e, \quad \bar{\bc}=\bc-\e, \quad \bar{\z}=\bc^\top\e+1,
\end{equation}
and the indicator function $\indfunc({\cal C})$ equals zero if the constraint ${\cal C}$ is satisfied, and equals $+\infty$ otherwise.
The reason that we introduce a parameter $\beta$ in the objective is completely for ease of presentation. It does not change the solution of the problem.

One classical way to solve \eqref{HLP-re} is to use log-barrier penalty to penalize the variables with non-negativity constraints, which results in a primal-dual interior-point method. The new formulation with log-barrier penalty is:
\begin{equation}\label{prob:HSD-log}
\ba{ll}
\min & B(\su, \sv, \mu), \\
\st  & Q\su = \sv,
\ea
\end{equation}
where $B(\su, \sv, \mu)$ is a barrier function defined as follows:
{\begin{equation}\label{prob:barrier}
B(\su, \sv, \mu) = \beta(n+1)\theta + \indfunc(\sr=0) + \indfunc(\xi=-n-1) -\mu\sum_i\log(\x_i) - \mu\sum_i\log(\s_i) - \mu\log(\tau) - \mu\log(\kappa),
\end{equation}}and $\mu>0$ is the penalty parameter.
In the $k$-th iteration of IPM, one uses Newton's method to solve the KKT system of \eqref{prob:HSD-log} with $\mu=\mu^k$. One then reduces $\mu^k$ to $\mu^{k+1}$ for the next iteration. When $\mu^k\rightarrow 0$, the solution of \eqref{prob:HSD-log} approaches that of \eqref{HLP-re}. The computational bottleneck of IPM is that
one has to assemble a
Newton's direction, which can be expensive when the problem is large and data are dense.

Observing the structure of \eqref{prob:HSD-log}, we propose to use the Alternating Direction Method of Multipliers (ADMM) to solve it inexactly. 
To do so, we first rewrite \eqref{prob:HSD-log} as the following problem by introducing auxiliary variables $(\tilde{\su},\tilde{\sv})$:
\begin{equation}\label{prob:HSD-admm}
\ba{ll}
\min & \indfunc(Q\tilde{\su}=\tilde{\sv}) + B(\su, \sv, \mu^k), \\
\st  & (\tilde{\su},\tilde{\sv}) = (\su,\sv).
\ea
\end{equation}
By associating (scaled) Lagrange multipliers $\p$ to constraint $\tilde{\su}=\su$ and $\q$ to constraint $\tilde{\sv}=\sv$, the augmented Lagrangian function for \eqref{prob:HSD-admm} can be written as
\[
\LCal_\beta(\tilde{\su},\tilde{\sv},\su,\sv,\mu^k,\p,\q):=\indfunc(Q\tilde{\su}=\tilde{\sv})+ B(\su,\sv,\mu^k)-\langle \beta(\p,\q),(\tilde{\su},\tilde{\sv})-(\su,\sv)\rangle + \frac{\beta}{2}\|(\tilde{\su},\tilde{\sv})-(\su,\sv)\|^2,
\]
where $\beta>0$ is the same parameter as in \eqref{HLP-re}.
The $i$-th iteration of ADMM for solving \eqref{prob:HSD-admm} is as follows:
\begin{align}
(\tilde{\su}_{i+1}^k,\tilde{\sv}_{i+1}^k) & = \argmin_{\tilde{\su},\tilde{\sv}} \ \LCal_\beta(\tilde{\su},\tilde{\sv},\su_i^k,\sv_i^k,\mu^k,\p_i^k,\q_i^k) = \prod_{Q\su=\sv} (\su_i^k+\p_i^k, \sv_i^k+\q_i^k), \label{admm-typical-1} \\
({\su}_{i+1}^k,{\sv}_{i+1}^k) & = \argmin_{{\su,\sv}} \ \LCal_\beta(\tilde{\su}_{i+1}^k,\tilde{\sv}_{i+1}^k,\su,\sv,\mu^k,\p_i^k,\q_i^k)   \label{admm-typical-2}, \\
(\p_{i+1}^k,\q_{i+1}^k) & = (\p_i^k,\q_i^k) - (\tilde{\su}_{i+1}^k,\tilde{\sv}_{i+1}^k) + (\su_{i+1}^k,\sv_{i+1}^k), \label{admm-typical-3}
\end{align}
where $\prod_{\SCal}(\x)$ denotes the Euclidean projection of $\x$ onto the set $\SCal$. {A generic description of our proposed 
approach -- \textsf{ABIP} -- is sketched as Algorithm \ref{Algorithm:Basic} below.}

\begin{algorithm}[ht]
\caption{The Basic Algorithmic Framework of \textsf{ABIP} for Linear Programming} \label{Algorithm:Basic}
\begin{algorithmic}[1]
\STATE Given parameters $\beta>0$ and $\gamma \in (0, 1)$. Set initial points $(\su_0^0, \sv_0^0)$, $(\p_0^0, \q_0^0)$ and $\mu^0>0$.
\FOR {$k = 0,1,2,\ldots$}
\FOR {$i = 0,1,2,\ldots$}
\IF{the termination criterion is satisfied}
\STATE \textbf{break}.
\ENDIF
\STATE Update $(\tilde{\su}_{i+1}^k, \tilde{\sv}_{i+1}^k)$ by \eqref{admm-typical-1};
\STATE Update $(\su_{i+1}^k, \sv_{i+1}^k)$ by \eqref{admm-typical-2};
\STATE Update $(\p_{i+1}^k, \q_{i+1}^k)$ by \eqref{admm-typical-3}.
\ENDFOR
\STATE Set $(\su_0^{k+1}, \sv_0^{k+1}) = (\su_{i+1}^k, \sv_{i+1}^k)$ and $(\p_0^{k+1}, \q_0^{k+1}) = (\p_{i+1}^k, \q_{i+1}^k)$;
\STATE Set $\mu^{k+1} = \gamma\mu^k$.
\ENDFOR
\end{algorithmic}
\end{algorithm}

\subsection{Implementing ABIP}
In this subsection we discuss the detailed implementation of \textsf{ABIP}. In particular, we show that the dual variables $\p$ and $\q$ in \eqref{admm-typical-1},  \eqref{admm-typical-2} and \eqref{admm-typical-3} can be eliminated using a proper initialization. The framework of our analysis is similar to the one in \cite{Donoghue-2016-conic}, but the techniques we use are quite different because the \textit{Moreau decomposition} cannot be directly applied to $(\su, \sv)$ when the log-barrier penalty function is used. The main technical result is summarized in the following theorem.

\begin{theorem}\label{Theorem:Simplified}
For the $k$-th outer iteration of Algorithm~\ref{Algorithm:Basic}, we initialize $\p_0^k=\sv_0^k$ and $\q_0^k=\su_0^k$ with
\begin{equation*}
\x_0^k\circ\s_0^k=\frac{\mu^k}{\beta}\e, \quad \tau_0^k\kappa_0^k=\frac{\mu^k}{\beta}, \quad \sr_0^k = 0, \quad \xi_0^k = -n-1.
\end{equation*}
It then holds, for all iterations $i\geq 0$, that
\begin{equation}\label{p=v,q=u}
\p_i^k=\sv_i^k, \quad \q_i^k=\su_i^k, \quad \x_i^k\circ\s_i^k=\frac{\mu^k}{\beta}\e, \quad \tau_i^k\kappa_i^k=\frac{\mu^k}{\beta}, \quad \sr_i^k = 0, \quad \xi_i^k = -n-1.
\end{equation}
\end{theorem}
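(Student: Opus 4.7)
The plan is to proceed by induction on the inner iteration counter $i$, with the base case supplied by the prescribed initialization. For the inductive step, I will assume that $\p_i^k=\sv_i^k$, $\q_i^k=\su_i^k$, $\x_i^k\circ\s_i^k=(\mu^k/\beta)\e$, $\tau_i^k\kappa_i^k=\mu^k/\beta$, $\sr_i^k=0$, $\xi_i^k=-n-1$, and trace these relations through the three ADMM substeps \eqref{admm-typical-1}--\eqref{admm-typical-3}, exploiting two pieces of structure: the skew-symmetry of $Q$ (visible directly from its block form) and the component-wise separability of the barrier $B$.

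First I would analyze the projection \eqref{admm-typical-1}. Under the inductive hypothesis, its input collapses to $(\w,\w)$ with $\w=\su_i^k+\sv_i^k$. Writing the projection onto $\{Q\su=\sv\}$ in closed form gives $\tilde{\su}_{i+1}^k=(I+Q^\top Q)^{-1}(I+Q^\top)\w$ and $\tilde{\sv}_{i+1}^k=Q\tilde{\su}_{i+1}^k$; using $Q^\top=-Q$ and the factorization $I-Q^2=(I+Q)(I-Q)$ (together with commutativity of $I\pm Q$), I can simplify $(I+Q)(I+Q^\top Q)^{-1}(I+Q^\top)$ to the identity, which yields the crucial identity
\[
\tilde{\su}_{i+1}^k+\tilde{\sv}_{i+1}^k \;=\; \su_i^k+\sv_i^k .
\]

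Next I turn to the $(\su,\sv)$-update \eqref{admm-typical-2}. Completing the square in $\LCal_\beta$ and substituting $\p_i^k=\sv_i^k$, $\q_i^k=\su_i^k$, the subproblem separates: the $\y$ component gives $\y_{i+1}^k=\tilde{\y}_{i+1}^k$; the indicator terms immediately force $\sr_{i+1}^k=0$ and $\xi_{i+1}^k=-n-1$; the $\theta$ first-order condition absorbs the linear cost to give $\theta_{i+1}^k=\tilde{\theta}_{i+1}^k-(n+1)+\xi_i^k/(\text{coeff})$ which lands on $\theta_{i+1}^k=\tilde{\theta}_{i+1}^k$ after using $\xi_i^k=-n-1$. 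The barrier-governed pairs $(x_j,s_j)$ and $(\tau,\kappa)$ decouple into one-dimensional quadratic-plus-log problems of the form $x(x-a)=\mu^k/\beta$ and $s(s-b)=\mu^k/\beta$, whose unique positive roots I can write explicitly. The identity from the projection, specialized to the $(\x,\s)$ and $(\tau,\kappa)$ slots, forces $a+b=0$; with $b=-a$ the explicit roots satisfy $x_j s_j=\mu^k/\beta$ and $\tau\kappa=\mu^k/\beta$ by a direct difference-of-squares cancellation, establishing the centrality relations at iteration $i+1$.

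Finally, I feed these formulas into the multiplier update \eqref{admm-typical-3}. A component-by-component check shows $\su_{i+1}^k-\sv_{i+1}^k=\tilde{\su}_{i+1}^k-\sv_i^k$ in each slot (the $(x_j,s_j)$ case is the only non-trivial one and follows from the explicit roots with $b=-a$), which is precisely the identity needed to conclude $\p_{i+1}^k=\sv_i^k-\tilde{\su}_{i+1}^k+\su_{i+1}^k=\sv_{i+1}^k$; the analogous identity in the $\sv$-block, obtained from the sum relation, yields $\q_{i+1}^k=\su_{i+1}^k$. I expect the main obstacle to be the projection computation, specifically ensuring that the skew-symmetry of $Q$ really collapses the combined operator to the identity; once that lemma $\tilde{\su}_{i+1}^k+\tilde{\sv}_{i+1}^k=\su_i^k+\sv_i^k$ is in hand, the rest is a bookkeeping exercise on separable one-dimensional subproblems.
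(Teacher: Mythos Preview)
Your proposal is correct and follows essentially the same route as the paper: induction on $i$, the key ``sum identity'' $\tilde{\su}_{i+1}^k+\tilde{\sv}_{i+1}^k=\su_i^k+\sv_i^k$ from the skew-symmetry of $Q$, and then a block-by-block analysis of the separable barrier subproblem and multiplier update. The only notable stylistic difference is that the paper derives the sum identity geometrically---observing that the orthogonal complement of $\{(\su,\sv):Q\su=\sv\}$ is $\{(\sv,\su):Q\su=\sv\}$, so projecting $(\w,\w)$ onto each gives the same output with arguments swapped---whereas you obtain it algebraically via $(I+Q)(I-Q^2)^{-1}(I-Q)=I$; and the paper verifies $\x_{i+1}^k\circ\s_{i+1}^k=(\mu^k/\beta)\e$ by manipulating the optimality conditions abstractly rather than writing out the explicit quadratic-formula roots, but the content is the same.
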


\begin{proof}
We shall prove the result by induction. Indeed, the proof is based on the following steps:
(i) Iteration $j=0$: the result holds true since we can initialize the variables accordingly.
(ii) The result holds true for iteration $j=i+1$ given that it holds true for iteration $j=i$. 
We prove the desired result in two steps:

{\bf Step 1:} We claim that
\begin{equation}\label{Skew-Symmetric-Result}
\su_i^k + \sv_i^k = \tilde{\su}_{i+1}^k + \tilde{\sv}_{i+1}^k.
\end{equation}
Indeed, we rewrite \eqref{admm-typical-1} as
\begin{equation}\label{Skew-Symmetric}
\left(\tilde{\su}_{i+1}^{k}, \tilde{\sv}_{i+1}^{k}\right) = \prod_\QCal\left(\su_i^k+\sv_i^k, \su_i^k+\sv_i^k\right),
\end{equation}
where $\QCal=\left\{(\su,\sv): Q\su=\sv\right\}$. Moreover, it follows from $Q$ being skew-symmetric that the orthogonal complement of $\QCal$ is $\QCal^\perp=\left\{(\sv,\su): Q\su=\sv\right\}$.
Therefore, we conclude that,
\begin{equation*}
(\sv,\su)=\prod_{\QCal^\perp}(\z,\z), \quad \text{if} \ (\su,\sv)=\prod_\QCal(\z,\z),
\end{equation*}
because the two projections are identical for reversed output arguments. This implies that
\begin{equation}\label{Skew-Symmetric-Reversed}
\left(\tilde{\sv}_{i+1}^{k}, \tilde{\su}_{i+1}^{k}\right) = \prod_{\QCal^\perp}\left(\su_i^k+\sv_i^k, \su_i^k+\sv_i^k\right).
\end{equation}
Therefore, combining \eqref{Skew-Symmetric} and \eqref{Skew-Symmetric-Reversed} yields the desired result.

{\bf Step 2:} We proceed to proving that
\begin{equation*}
\p_{i+1}^k=\sv_{i+1}^k, \quad \q_{i+1}^k=\su_{i+1}^k, \quad \x_{i+1}^k\circ\s_{i+1}^k=\frac{\mu^k}{\beta}\e, \quad \tau_{i+1}^k\kappa_{i+1}^k=\frac{\mu^k}{\beta}, \quad \xi_{i+1}^k = -n-1,
\end{equation*}
given
\begin{equation}\label{Assumption:Previous-Equality}
\p_i^k=\sv_i^k, \quad \q_i^k=\su_i^k
\end{equation}
and
\begin{equation}\label{Assumption:Previous-Complementary}
\x_i^k\circ\s_i^k=\frac{\mu^k}{\beta}\e, \quad \tau_i^k\kappa_i^k=\frac{\mu^k}{\beta}, \quad \xi_i^k = -n-1.
\end{equation}

Indeed, we partition $\p$ and $\q$ as
\begin{equation*}
\p = \left[\begin{array}{c} \p_\y \\ \p_\x \\ \p_\tau \\ \p_\theta \end{array}\right], \quad \q = \left[\begin{array}{c} \q_\sr \\ \q_\s \\ \q_\kappa \\ \q_\xi \end{array}\right],
\end{equation*}
and the optimality conditions of \eqref{admm-typical-2} are given by
\begin{align}
0 & = \y_{i+1}^k - \tilde{\y}_{i+1}^k + (\p_\y)_i^k, \label{opt:y}   \\
0 & = -\frac{\mu^k}{\beta} \cdot \frac{1}{\x_{i+1}^k} + \x_{i+1}^k - \tilde{\x}_{i+1}^k + (\p_\x)_i^k, \label{opt:x}  \\
0 & = -\frac{\mu^k}{\beta} \cdot \frac{1}{\tau_{i+1}^k} + \tau_{i+1}^k - \tilde{\tau}_{i+1}^k + (\p_\tau)_i^k, \label{opt:tau}  \\
0 & = (n+1) + \theta_{i+1}^k-\tilde{\theta}_{i+1}^k+(\p_\theta)_i^k, \label{opt:theta} \\
0 & = \sr_{i+1}^k, \label{opt:r}  \\
0 & = -\frac{\mu^k}{\beta} \cdot \frac{1}{\s_{i+1}^k} + \s_{i+1}^k - \tilde{\s}_{i+1}^k + (\q_\s)_i^k, \label{opt:s}  \\
0 & = -\frac{\mu^k}{\beta} \cdot \frac{1}{\kappa_{i+1}^k} + \kappa_{i+1}^k - \tilde{\kappa}_{i+1}^k + (\q_\kappa)_i^k, \label{opt:kappa} \\
0 & = \xi_{i+1}^k + n+1. \label{opt:xi}
\end{align}
First, we show that
\begin{equation*}
\sr_{i+1}^k = (\p_\y)_{i+1}^k = 0, \qquad \y_{i+1}^k = (\q_\sr)_{i+1}^k.
\end{equation*}
Indeed, from \eqref{admm-typical-3}, \eqref{opt:y} and \eqref{opt:r} we have that
\begin{equation*}
(\p_\y)_{i+1}^k = (\p_\y)_i^k - \tilde{\y}_{i+1}^k + \y_{i+1}^k = 0 = \sr_{i+1}^k.
\end{equation*}
Furthermore, we have
\begin{align*}
(\q_\sr)_{i+1}^k & \overset{\eqref{admm-typical-3}}{=} (\q_\sr)_i^k - \tilde{\sr}_{i+1}^k + \sr_{i+1}^k \overset{\eqref{opt:r}}{=} (\q_\sr)_i^k - \tilde{\sr}_{i+1}^k \overset{\eqref{Skew-Symmetric-Result}}{=} (\q_\sr)_i^k - \left(\sr_i^k + \y_i^k - \tilde{\y}_{i+1}^k\right) \\
& \overset{\eqref{opt:y}}{=} (\q_\sr)_i^k - \left(\sr_i^k + \y_i^k - \y_{i+1}^k - (\p_\y)_i^k\right) \overset{\eqref{Assumption:Previous-Equality}}{=} \y_i^k - \left(\sr_i^k + \y_i^k - \y_{i+1}^k - \sr_i^k\right) = \y_{i+1}^k.
\end{align*}
Second, we shall prove
\begin{equation*}
\x_{i+1}^k = (\q_\s)_{i+1}^k, \qquad \s_{i+1}^k = (\p_\x)_{i+1}^k, \qquad \x_{i+1}^k \circ\s_{i+1}^k = \frac{\mu^k}{\beta}\cdot \e.
\end{equation*}
Indeed, from \eqref{admm-typical-3} and \eqref{Assumption:Previous-Equality} we have
\begin{equation*}
(\p_\x)_{i+1}^k = (\p_\x)_i^k - \tilde{\x}_{i+1}^k + \x_{i+1}^k = \s_i^k - \tilde{\x}_{i+1}^k + \x_{i+1}^k,
\end{equation*}
and
\begin{equation*}
(\q_\s)_{i+1}^k = (\q_\s)_i^k - \tilde{\s}_{i+1}^k + \s_{i+1}^k = \x_i^k - \tilde{\s}_{i+1}^k + \s_{i+1}^k.
\end{equation*}
Combining the above two equations with \eqref{Skew-Symmetric-Result} yields
\begin{equation}\label{Primal-Dual-Equality}
(\p_\x)_{i+1}^k + (\q_\s)_{i+1}^k = \x_{i+1}^k + \s_{i+1}^k.
\end{equation}
Besides, from \eqref{opt:x} and \eqref{admm-typical-3} we have
\begin{equation}\label{Result:x-px}
\frac{\mu^k}{\beta} \cdot \e = \x_{i+1}^k \circ \left(\x_{i+1}^k - \tilde{\x}_{i+1}^k + (\p_\x)_i^k\right) = \x_{i+1}^k \circ (\p_\x)_{i+1}^k,
\end{equation}
and from \eqref{opt:s} and \eqref{admm-typical-3} we have
\begin{equation}\label{Result:s-qs}
\frac{\mu^k}{\beta} \cdot \e = \s_{i+1}^k \circ \left(\s_{i+1}^k - \tilde{\s}_{i+1}^k + (\q_\s)_i^k\right) = \s_{i+1}^k \circ (\q_\s)_{i+1}^k.
\end{equation}
Therefore, we obtain
\begin{eqnarray*}
0 & \overset{\eqref{Result:x-px},\eqref{Result:s-qs}}{=} & \x_{i+1}^k \circ (\p_\x)_{i+1}^k - \s_{i+1}^k \circ (\q_\s)_{i+1}^k \\
& \overset{\eqref{Primal-Dual-Equality}}{=} & \x_{i+1}^k \circ \left( \x_{i+1}^k + \s_{i+1}^k - (\q_\s)_{i+1}^k\right) - \s_{i+1}^k \circ (\q_\s)_{i+1}^k \\
& = & \left(\x_{i+1}^k - (\q_\s)_{i+1}^k\right) \circ \left( \x_{i+1}^k + \s_{i+1}^k\right).
\end{eqnarray*}
Since $\x_{i+1}^k + \s_{i+1}^k>0$, we conclude that
$\x_{i+1}^k=(\q_\s)_{i+1}^k$
which, combining with~\eqref{Result:s-qs}, leads to
\begin{equation*}
\frac{\mu^k}{\beta} \cdot \e = \x_{i+1}^k\circ\s_{i+1}^k.
\end{equation*}
It also directly follows from~\eqref{Primal-Dual-Equality} that
$\s_{i+1}^k=(\p_\x)_{i+1}^k$.
We use the same arguments to conclude
\begin{equation*}
\tau_{i+1}^k = (\q_\kappa)_{i+1}^k, \qquad \kappa_{i+1}^k = (\p_\tau)_{i+1}^k, \qquad \tau_{i+1}^k \kappa_{i+1}^k = \frac{\mu^k}{\beta}.
\end{equation*}
Finally, we show that
\begin{equation*}
\xi_{i+1}^k = (\p_\theta)_{i+1}^k = -n-1, \qquad \theta_{i+1}^k = (\q_\xi)_{i+1}^k = \frac{\mu^k}{\beta}.
\end{equation*}
Indeed, from \eqref{admm-typical-3}, \eqref{opt:theta} and \eqref{opt:xi} we have
\begin{equation*}
(\p_\theta)_{i+1}^k = (\p_\theta)_i^k - \tilde{\theta}_{i+1}^k + \theta_{i+1}^k = -n-1 = \xi_{i+1}^k.
\end{equation*}
Furthermore, combining~\eqref{Assumption:Previous-Equality} and \eqref{Assumption:Previous-Complementary} we have
\begin{equation*}
(\p_\theta)_i^k = \xi_i^k = -n-1,
\end{equation*}
which implies that
\begin{equation}\label{Theta-Constant-Equality}
\theta_{i+1}^k - \tilde{\theta}_{i+1}^k = (\p_\theta)_{i+1}^k - (\p_\theta)_i^k = 0.
\end{equation}
Therefore, we conclude that
\begin{align*}
(\q_\xi)_{i+1}^k & \overset{\eqref{admm-typical-3}}{=} (\q_\xi)_i^k - \tilde{\xi}_{i+1}^k + \xi_{i+1}^k \overset{\eqref{Skew-Symmetric-Result}}{=}  (\q_\xi)_i^k - \theta_i^k - \xi_i^k + \tilde{\theta}_{i+1}^k + \xi_{i+1}^k \\
& \overset{\eqref{Assumption:Previous-Complementary},\eqref{opt:xi}}{=} (\q_\xi)_i^k - \theta_i^k + \tilde{\theta}_{i+1}^k  \overset{\eqref{Theta-Constant-Equality}}{=} (\q_\xi)_i^k - \theta_i^k + \theta_{i+1}^k \overset{\eqref{Assumption:Previous-Equality}}{=} \theta_{i+1}^k.
\end{align*}
This completes the proof.
\end{proof}
Observe that Theorem~\ref{Theorem:Simplified} simplifies Algorithm~\ref{Algorithm:Basic} by eliminating the dual variables $\p$ and $\q$. They are replaced by $\sv$ and $\su$, respectively. Moreover, note that $\su$ and $\sv$ are separable in \eqref{admm-typical-2}. As a result, we can update $\su$ by
\begin{equation}\label{Simple-Update:Barrier}
\su_{i+1}^{k} = \argmin_{\su} \left[\bar{B}(\su, \mu^k) + \frac{\beta}{2}\left\| \su - \tilde{\su}_{i+1}^k + \sv_i^k\right\|^2\right],
\end{equation}
where
\begin{equation}\label{B-u-mu}
\bar{B}(\su, \mu) := \beta(n+1)\theta-\mu\log(\x) - \mu\log(\tau),
\end{equation}
and update $\sv$ by
\begin{equation}\label{Simple-Update:Dual}
\sv_{i+1}^k = \sv_i^k - \tilde{\su}_{i+1}^k + \su_{i+1}^k,
\end{equation}
which follows from the update for $\p_{i+1}^k$. Note that $\tilde{\sv}_i^k$ can now be eliminated from the algorithm. Problem \eqref{Simple-Update:Barrier} admits closed-form solutions given by
\begin{align}
\y_{i+1}^k & = \argmin_{\y} \left[\frac{1}{2}\left\| \y - \tilde{\y}_{i+1}^k + \sr_i^k\right\|^2\right] = \tilde{\y}_{i+1}^k, \label{Simple-Update:Barrier-Y} \\
\x_{i+1}^k & = \argmin_{\x} \left[-\frac{\mu^k}{\beta}\log(\x) + \frac{1}{2}\left\| \x - \tilde{\x}_{i+1}^k + \s_i^k\right\|^2\right] \nonumber \\
& = \frac{1}{2}\left[\left(\tilde{\x}_{i+1}^k - \s_i^k\right) + \sqrt{\left(\tilde{\x}_{i+1}^k - \s_i^k\right)\circ\left(\tilde{\x}_{i+1}^k - \s_i^k\right)+\frac{4\mu^k}{\beta}}\right], \label{Simple-Update:Barrier-X} \\
\tau_{i+1}^k & = \argmin_{\tau} \left[-\frac{\mu^k}{\beta}\log(\tau) + \frac{1}{2}\left\|\tau - \tilde{\tau}_{i+1}^k + \kappa_i^k\right\|^2\right] \nonumber \\
& = \frac{1}{2}\left[\left(\tilde{\tau}_{i+1}^k - \kappa_i^k\right) + \sqrt{\left(\tilde{\tau}_{i+1}^k - \kappa_i^k\right)\circ\left(\tilde{\tau}_{i+1}^k - \kappa_i^k\right)+\frac{4\mu^k}{\beta}}\right], \label{Simple-Update:Barrier-tau} \\
\theta_{i+1}^k & = \tilde{\theta}_{i+1}^k, \label{Simple-Update:Barrier-theta}
\end{align}
where the last step is from~\eqref{Theta-Constant-Equality}. By eliminating $\p_i^k$ and $\q_i^k$, \eqref{admm-typical-1} reduces to
\begin{equation*}
\left(\tilde{\su}_{i+1}^k, \tilde{\sv}_{i+1}^k\right) = \prod_{Q\su=\sv} \left(\su_i^k+\sv_i^k, \su_i^k+\sv_i^k\right).
\end{equation*}
It is easy to show (by the KKT condition) that the solution is given by
\begin{equation}\label{Simple-Update:Affine}
\tilde{\su}_{i+1}^k = \left(I+Q^\top Q\right)^{-1}\left(I-Q\right)\left(\su_i^k + \sv_i^k\right)= \left(I+Q\right)^{-1}\left(\su_i^k + \sv_i^k\right),
\end{equation}
because matrix $Q$ is skew-symmetric. Moreover, we only need to invert (or factorize) $I+Q$ once at the beginning of the algorithm. In this sense, \textsf{ABIP} is matrix inversion free. 

Therefore, we have shown that \eqref{admm-typical-1}, \eqref{admm-typical-2} and \eqref{admm-typical-3} can be simply implemented by means of \eqref{Simple-Update:Affine}, \eqref{Simple-Update:Barrier} and \eqref{Simple-Update:Dual} respectively, and the solutions of \eqref{Simple-Update:Barrier} are given by \eqref{Simple-Update:Barrier-Y}, \eqref{Simple-Update:Barrier-X}, \eqref{Simple-Update:Barrier-tau} and
\eqref{Simple-Update:Barrier-theta}.

{We use the following criterion to terminate the inner loop of Algorithm \ref{Algorithm:Basic}:
\begin{equation}\label{inner-stop}
\left\|Q\su_i^k - \sv_i^k\right\|^2 \leq \mu^k.
\end{equation}
Finally, we present this specific implementation \textsf{ABIP} as Algorithm~\ref{Algorithm:Simplified}.}
\begin{algorithm}[ht]
\caption{The Detailed Implementation of \textsf{ABIP}} \label{Algorithm:Simplified}
\begin{algorithmic}[1]
\STATE Set $\mu^0=\beta>0$ and $\gamma\in\left(0,1\right)$.
\STATE Set $\sr_0^0=\y_0^0=0$, $\left(\x_0^0, \tau_0^0, \s_0^0, \kappa_0^0\right)=\left(\e,1,\e,1\right)>0$, $\theta_0^0=1$, and $\xi_0^0=-n-1$ with $\x_0^0\circ\s_0^0=\frac{\mu^0}{\beta}\e$, and $\tau_0^0\kappa_0^0=\frac{\mu^0}{\beta}$.
\FOR {$k = 0,1,2,\ldots$}
\FOR {$i = 0,1,2,\ldots$}
\IF{the \textbf{inner} termination criterion \eqref{inner-stop} is satisfied}
\STATE \textbf{break}.
\ENDIF
\STATE Update $\tilde{\su}_{i+1}^k$ by using \eqref{Simple-Update:Affine};
\STATE Update $\su_{i+1}^k$ by using \eqref{Simple-Update:Barrier-Y}, \eqref{Simple-Update:Barrier-X}, \eqref{Simple-Update:Barrier-tau} and \eqref{Simple-Update:Barrier-theta};
\STATE Update $\sv_{i+1}^k$ by using \eqref{Simple-Update:Dual}.
\IF{the \textbf{final} termination criterion is satisfied}
\STATE \textbf{return}.
\ENDIF
\ENDFOR
\STATE Set $\mu^{k+1}=\gamma\cdot\mu^k$;
\STATE Set $\sr_0^{k+1} = 0$, $\xi_0^{k+1}=-n-1$ and
\begin{equation}\label{update-next-iterate}
(\y_0^{k+1}, \x_0^{k+1}, \s_0^{k+1}, \tau_0^{k+1}, \kappa_0^{k+1}, \theta_0^{k+1}) = \sqrt{\gamma}\cdot(\y_{i+1}^k, \x_{i+1}^k, \s_{i+1}^k, \tau_{i+1}^k, \kappa_{i+1}^k, \theta_{i+1}^{k}).
\end{equation}
\ENDFOR
\end{algorithmic}
\end{algorithm}

\begin{remark}
{We denote $(\su_k^*,\sv_k^*)$ as the optimal solution to \eqref{prob:HSD-log} when $\mu=\mu^k$, which also satisfies the following optimality conditions of \eqref{prob:HSD-log}:
\begin{eqnarray}\label{HSD-log-kkt}
\left\{
\begin{array}{rcl}
Q\su - \sv & = & 0, \\
\x \circ \s & = & \frac{\mu^k}{\beta} \e, \\
\tau\kappa & = & \frac{\mu^k}{\beta}, \\
\theta & = & \frac{\mu^k}{\beta}, \\
\sr & = & 0, \\
\xi & = & -n-1, \\
\left(\x, \s, \tau, \kappa\right) & > & 0 .
\end{array}
\right.
\end{eqnarray}
Moreover, $(\su_k^*,\sv_k^*)$ is uniquely defined. In fact, \eqref{HSD-log-kkt} defines a central path (cf.~\cite{Sonnevend-1986-analytical, Bayer-1989-nonlinear, Megiddo-1989-progress, Renegar-1988-polynomial}) of the homogeneous self-dual embedded model (\cite{Ye-1994-hsd}).}

From Theorem \ref{Theorem:Simplified} we have
\[
\left\{
\begin{array}{rcl}
Q\tilde{\su}_i^k - \tilde{\sv}_i^k &=& 0, \\
\x_i^k \circ \s_i^k & = & \frac{\mu^k}{\beta} \e, \\
\tau_i^k \kappa_i^k & = & \frac{\mu^k}{\beta}, \\
\sr_i^k & = & 0, \\
\xi_i^k & = & -n-1, \\
\left(\x_i^k, \s_i^k, \tau_i^k, \kappa_i^k\right) & > & 0,
\end{array}
\right.
\]
and
\begin{equation*}
\theta_i^k = \tilde{\theta}_i^k = \frac{\left(\tilde{\x}_i^k\right)^\top\tilde{\s}_i^k + \tilde{\tau}_i^k \tilde{\kappa}_i^k + \left(\tilde{\y}_i^k\right)^\top\tilde{\sr}_i^k}{-\tilde{\xi}_i^k}.
\end{equation*}
Together with the feasibility condition that $\left\|\left(\tilde{\su}_i^k, \tilde{\sv}_i^k\right) - \left(\su_i^k, \sv_i^k\right)\right\|\rightarrow 0$ when $i\rightarrow +\infty$ (See Lemma \ref{Lemma:Descent-Inner-Loop}), we conclude that the optimal solution to problem~\eqref{prob:HSD-admm} is on the central path. This implies that \textsf{ABIP} is indeed a central path following algorithm, in view of the classical primal-dual central path following scheme.
\end{remark}

\begin{corollary}\label{cor:pk=vk}
Following a similar argument as in Theorem \ref{Theorem:Simplified}, it is easy to prove:
    \begin{equation}\label{pstar=vstar} \left(\p_k^*, \q_k^*\right)=\left(\sv_k^*, \su_k^*\right), \end{equation}
    where $\left(\p_k^*, \q_k^*\right)$ denotes the optimal dual solution of \eqref{prob:HSD-admm}.
\end{corollary}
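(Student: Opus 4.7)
The plan is to mimic the proof of Theorem~\ref{Theorem:Simplified} at the fixed point, or equivalently to pass to the limit in its invariant. The cleanest route is the latter: Theorem~\ref{Theorem:Simplified} establishes the invariant $\p_i^k = \sv_i^k$ and $\q_i^k = \su_i^k$ for every inner iterate $i \geq 0$. Under standard ADMM convergence theory applied to \eqref{prob:HSD-admm} (the augmented Lagrangian is strictly convex in $(\su,\sv)$ on the interior where the log-barrier is finite), the sequence $\{(\su_i^k, \sv_i^k, \p_i^k, \q_i^k)\}$ converges to a primal-dual optimum $(\su_k^*, \sv_k^*, \p_k^*, \q_k^*)$ of \eqref{prob:HSD-admm}, so passing to the limit yields \eqref{pstar=vstar} immediately.

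For a self-contained KKT argument, at the optimum of \eqref{prob:HSD-admm} the consensus constraints force $\tilde{\su}_k^* = \su_k^*$ and $\tilde{\sv}_k^* = \sv_k^*$, so $(\su_k^*, \sv_k^*)$ satisfies $Q\su = \sv$ and lies on the central path described by \eqref{HSD-log-kkt}. Writing the Lagrangian stationarity with respect to $(\su, \sv)$ produces the ``steady-state'' form of \eqref{opt:y}--\eqref{opt:xi}: the difference terms $\x_{i+1}^k - \tilde{\x}_{i+1}^k$ etc.\ cancel, giving $\beta\p_k^* \in -\partial_{\su} B(\su_k^*,\sv_k^*,\mu^k)$ and $\beta\q_k^* \in -\partial_{\sv} B(\su_k^*,\sv_k^*,\mu^k)$. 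Plugging in the central-path identities $\x_k^* \circ \s_k^* = (\mu^k/\beta)\e$, $\tau_k^* \kappa_k^* = \mu^k/\beta$ and $\xi_k^* = -(n+1)$ immediately yields $(\p_\y)_k^* = 0 = \sr_k^*$, $(\p_\x)_k^* = \mu^k/(\beta\x_k^*) = \s_k^*$, $(\p_\tau)_k^* = \kappa_k^*$, $(\p_\theta)_k^* = -(n+1) = \xi_k^*$, establishing $\p_k^* = \sv_k^*$, as well as $(\q_\s)_k^* = \x_k^*$ and $(\q_\kappa)_k^* = \tau_k^*$.

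The remaining components $(\q_\sr)_k^*$ and $(\q_\xi)_k^*$ are multipliers of the indicator constraints $\sr = 0$ and $\xi = -n-1$, whose normal cones do not pin them down from barrier stationarity alone. To close this gap, the idea is to use the stationarity with respect to $(\tilde{\su}, \tilde{\sv})$ of the coupling constraint: eliminating the Lagrange multiplier of $Q\tilde{\su} = \tilde{\sv}$ and invoking the skew-symmetry of $Q$ (the fixed-point analogue of Step~1 in Theorem~\ref{Theorem:Simplified}) yields $\p_k^* = Q\q_k^*$. Combined with $\p_k^* = \sv_k^* = Q\su_k^*$ and the already-identified components of $\q_k^*$, this forces $\q_k^* = \su_k^*$ on the non-degenerate directions.

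The main obstacle in this direct route is precisely the potential non-uniqueness of $(\q_\sr)_k^*$ and $(\q_\xi)_k^*$ on degenerate problem instances (e.g.\ when $\bar{\bb} = 0$), where the linear system from $\p_k^* = Q\q_k^*$ does not uniquely pin down the indicator multipliers. The limit argument sketched at the start bypasses this obstacle entirely, since the invariant of Theorem~\ref{Theorem:Simplified} holds for \emph{every} iterate and therefore necessarily for whatever limit point the algorithm produces, regardless of multiplier uniqueness.
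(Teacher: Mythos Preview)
Your limit argument is valid and avoids circularity (you correctly invoke general ADMM convergence theory rather than Lemma~\ref{Lemma:Descent-Inner-Loop}, which would be circular since that lemma uses the corollary). However, the paper's hint points to something simpler and non-asymptotic: one only needs to \emph{verify} that $(\sv_k^*,\su_k^*)$ is \emph{an} optimal dual pair for \eqref{prob:HSD-admm}, not that it is the unique one. Indeed, in the proof of Lemma~\ref{Lemma:Descent-Inner-Loop} the only use of \eqref{pstar=vstar} is that $(\sv_k^*,\su_k^*)$ satisfies the two KKT inclusions there; uniqueness is never invoked.

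Your direct KKT attempt stumbles precisely because you are implicitly trying to prove uniqueness. Read the other way around, the indicator normal cones are a help rather than an obstacle: since $\partial_{\sr}\indfunc(\sr=0)\big|_{\sr=0}=\br^m$ and $\partial_{\xi}\indfunc(\xi=-n-1)\big|_{\xi=-n-1}=\br$, \emph{any} values for $(\q_\sr)_k^*$ and $(\q_\xi)_k^*$ --- in particular $\y_k^*$ and $\theta_k^*$ --- satisfy the $\sv$-stationarity. The $\su$-stationarity pins down $\p_k^*=\sv_k^*$ exactly as you computed, using the central-path identities $\x_k^*\circ\s_k^*=(\mu^k/\beta)\e$, $\tau_k^*\kappa_k^*=\mu^k/\beta$, $\xi_k^*=-(n+1)$. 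Finally, the $(\tilde\su,\tilde\sv)$-stationarity requires $(\p_k^*,\q_k^*)\in\QCal^\perp$, i.e.\ $\p_k^*=Q\q_k^*$; with $\q_k^*=\su_k^*$ this is just $Q\su_k^*=\sv_k^*$, which holds by \eqref{HSD-log-kkt}. That is the whole argument, and it mirrors Theorem~\ref{Theorem:Simplified} in that the same central-path identities drive the verification. Your route via the limit works too, but it is heavier machinery than the paper intends, and the non-uniqueness worry you raise at the end is a red herring once the goal is stated correctly.
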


\subsection{Iteration Complexity Analysis}
In this subsection we analyze the iteration complexity of \textsf{ABIP}. 
The following identity will be frequently used in our analysis:
\begin{equation}\label{identity}
\left(a_1-a_2\right)^\top\left(a_3-a_4\right) = \frac{1}{2}\left(\left\|a_4+a_2\right\|^2-\left\|a_4+a_1\right\|^2+\left\|a_3+a_1\right\|^2-\left\|a_3+a_2\right\|^2\right), \forall a_1,a_2,a_3,a_4.
\end{equation}

To prove the main result, we need several technical lemmas.
\begin{lemma}\label{Lemma:Descent-Inner-Loop}
Given $k\geq 1$, the sequence $\{\|\su_i^k - \su_k^*\|^2 + \|\sv_i^k - \sv_k^*\|^2\}_{i\geq 0}$ is monotonically decreasing and converges to $0$.
\end{lemma}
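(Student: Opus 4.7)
The plan is to run the standard Fejér-monotonicity analysis for ADMM applied to \eqref{prob:HSD-admm} with $\mu=\mu^k$, but specialized to the present setting where Theorem \ref{Theorem:Simplified} and Corollary \ref{cor:pk=vk} identify the multipliers with the primal variables: $(\p_i^k,\q_i^k)=(\sv_i^k,\su_i^k)$ along the iterates and $(\p_k^*,\q_k^*)=(\sv_k^*,\su_k^*)$ at the optimum. Consequently, the classical ADMM Lyapunov quantity $\|(\su_i^k,\sv_i^k)-(\su_k^*,\sv_k^*)\|^2+\|(\p_i^k,\q_i^k)-(\p_k^*,\q_k^*)\|^2$ collapses exactly to $2(\|\su_i^k-\su_k^*\|^2+\|\sv_i^k-\sv_k^*\|^2)$, so the target quantity is precisely the standard ADMM merit function, and monotone decrease of it is what we need.

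To derive the one-step decrease, I would write down the optimality conditions for the three updates. (i) The projection step \eqref{admm-typical-1} gives that $(\tilde{\su}_{i+1}^k,\tilde{\sv}_{i+1}^k)-(\su_i^k+\p_i^k,\sv_i^k+\q_i^k)\in\QCal^\perp$ while $(\tilde{\su}_{i+1}^k,\tilde{\sv}_{i+1}^k)\in\QCal$. (ii) The strongly convex proximal step \eqref{admm-typical-2} yields a subgradient inclusion for $B(\cdot,\mu^k)$. (iii) The dual update \eqref{admm-typical-3} is explicit. The KKT system \eqref{HSD-log-kkt} together with $(\p_k^*,\q_k^*)=(\sv_k^*,\su_k^*)$ supplies a matching inclusion at the optimum; in particular $(\su_k^*,\sv_k^*)\in\QCal$ and $(\sv_k^*,\su_k^*)\in\QCal^\perp$ (using skew-symmetry of $Q$). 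Pairing differences against differences, using convexity of the indicator of $\QCal$ and of $B(\cdot,\mu^k)$, and then applying the algebraic identity \eqref{identity} to convert inner products into squared-norm differences, I expect an inequality of the shape
\begin{equation*}
\|\su_{i+1}^k-\su_k^*\|^2+\|\sv_{i+1}^k-\sv_k^*\|^2 \;\le\; \|\su_i^k-\su_k^*\|^2+\|\sv_i^k-\sv_k^*\|^2 \;-\; \Delta_i,
\end{equation*}
with $\Delta_i\ge 0$ of the form $\|(\tilde{\su}_{i+1}^k,\tilde{\sv}_{i+1}^k)-(\su_{i+1}^k,\sv_{i+1}^k)\|^2+\|(\su_{i+1}^k,\sv_{i+1}^k)-(\su_i^k,\sv_i^k)\|^2$. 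This immediately yields the monotone-decrease half of the claim.

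Summing the descent inequality over $i$ gives $\sum_i\Delta_i<\infty$, so both the primal residual $(\tilde{\su}_{i+1}^k,\tilde{\sv}_{i+1}^k)-(\su_{i+1}^k,\sv_{i+1}^k)$ and the successive-iterate increment vanish. To upgrade monotone decrease to convergence to zero, I would extract a limit point of the bounded sequence $\{(\su_i^k,\sv_i^k)\}$; the vanishing residuals together with the passage to the limit in the ADMM optimality conditions and the dual-update relation identify any such limit as a KKT point of \eqref{prob:HSD-log} at $\mu=\mu^k$. Strict convexity of $B(\cdot,\mu^k)$ in $(\x,\s,\tau,\kappa)$ combined with the affine constraint $Q\su=\sv$ and the explicit updates for $(\y,\theta,\sr,\xi)$ ensure that the KKT point is unique, hence equal to $(\su_k^*,\sv_k^*)$. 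Fejér monotonicity plus uniqueness of cluster points yields convergence of the full sequence, so $\|\su_i^k-\su_k^*\|^2+\|\sv_i^k-\sv_k^*\|^2\to 0$.

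The main obstacle I anticipate is bookkeeping the descent inequality cleanly: the proximal step is over the nonsmooth barrier $B$, and the projection step involves the skew-symmetric $Q$, which makes $\QCal$ and $\QCal^\perp$ interchange the roles of the two components. One must exploit exactly this swap to cancel the cross terms produced by the identity \eqref{identity}, using that $(\p,\q)$ pairs with $(\sv,\su)$ rather than $(\su,\sv)$. The free-variable blocks $(\y,\theta,\sr,\xi)$ carry no barrier and are handled by \eqref{Simple-Update:Barrier-Y}, \eqref{Simple-Update:Barrier-theta}, \eqref{opt:r}, \eqref{opt:xi}, which should make the nontrivial part of the estimate concentrate on the $(\x,\s)$ and $(\tau,\kappa)$ blocks where the barrier acts.
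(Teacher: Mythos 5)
Your proposal is correct and follows essentially the same route as the paper: the multiplier identification $(\p_i^k,\q_i^k)=(\sv_i^k,\su_i^k)$ and $(\p_k^*,\q_k^*)=(\sv_k^*,\su_k^*)$ collapses the standard ADMM Lyapunov function to the stated quantity, convexity of the indicator of $\QCal$ and of $B(\cdot,\mu^k)$ together with the identity \eqref{identity} yield the one-step descent with a nonnegative residual term, and boundedness plus vanishing residuals plus uniqueness of the central-path point give convergence to zero. The descent inequality you anticipate is exactly the paper's \eqref{decrease} (up to which nonnegative terms are retained on the left), and the limit-point/uniqueness argument matches the paper's conclusion.
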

\begin{proof}
We observe from the optimality condition of problem~\eqref{prob:HSD-admm} that
\[
\beta(\p_k^*, \q_k^*) \in \partial \indfunc (Q\su=\sv) [\su_k^*, \sv_k^*], \quad
-\beta(\p_k^*, \q_k^*) \in \partial B(\su_k^*, \sv_k^*, \mu^k).
\]
By using the convexity of $\indfunc (Q\su=\sv)$ and \eqref{pstar=vstar} we have
\[
0 \leq \beta(\su_k^*-\tilde{\su}_{i+1}^k, \sv_k^*-\tilde{\sv}_{i+1}^k)^\top(\p_k^*, \q_k^*) = \beta(\su_k^*-\tilde{\su}_{i+1}^k, \sv_k^*-\tilde{\sv}_{i+1}^k)^\top(\sv_k^*, \su_k^*),
\]
and using the convexity of $B(\su,\sv,\mu)$ with respect to $(\su,\sv)$ and \eqref{pstar=vstar} we have
\[
\beta(n+1)(\theta_k^* - \theta_{i+1}^k) \leq -\beta(\su_k^*-\su_{i+1}^k, \sv_k^*-\sv_{i+1}^k)^\top(\p_k^*, \q_k^*) = -\beta(\su_k^*-\su_{i+1}^k, \sv_k^*-\sv_{i+1}^k)^\top(\sv_k^*, \su_k^*),
\]
{where we have used the fact $\beta (n+1) \left(\theta_{i+1}^k - \theta_k^*\right)  = B(\su_{i+1}^k, \sv_{i+1}^k, \mu^k) - B(\su_k^*, \sv_k^*, \mu^k)$ that follows from the complementarity conditions $\x_i^k\circ\s_i^k = \frac{\mu^k}{\beta}\e$ and $\tau_i^k\kappa_i^k = \frac{\mu^k}{\beta}$.}
Summing up the above two inequalities leads to
\begin{equation}\label{lemma-descent:inequality-1}
\beta (n+1) (\theta_k^* - \theta_{i+1}^k) \leq \beta(\su_{i+1}^k - \tilde{\su}_{i+1}^k, \sv_{i+1}^k - \tilde{\sv}_{i+1}^k)^\top(\sv_k^*, \su_k^*).
\end{equation}
Combining \eqref{admm-typical-3} and the optimality conditions of \eqref{admm-typical-1} and \eqref{admm-typical-2} yields
\begin{equation}\label{opt-1}
(\p_{i+1}^k, \q_{i+1}^k) - (\su_{i+1}^k, \sv_{i+1}^k) + (\su_i^k, \sv_i^k) \in \partial \indfunc (Q\su=\sv) [\tilde{\su}_{i+1}^k, \tilde{\sv}_{i+1}^k],
\end{equation}
\begin{equation}\label{opt-2}
-\beta(\p_{i+1}^k, \q_{i+1}^k) \in \partial B(\su_{i+1}^k, \sv_{i+1}^k, \mu^k).
\end{equation}
From the convexity of $\indfunc (Q\su=\sv)$, we have
\begin{eqnarray}\label{37.5}
0 & \overset{\eqref{opt-1}}{\leq} & (\tilde{\su}_{i+1}^k - \su_k^*, \tilde{\sv}_{i+1}^k - \sv_k^*)^\top(\p_{i+1}^k - \su_{i+1}^k + \su_i^k, \q_{i+1}^k - \sv_{i+1}^k + \sv_i^k) \\
& \overset{\eqref{identity}}{=} & (\tilde{\su}_{i+1}^k - \su_k^*, \tilde{\sv}_{i+1}^k - \sv_k^*)^\top(\p_{i+1}^k, \q_{i+1}^k) + \frac{1}{2}\left(\|\su_i^k - \su_k^*\|^2 + \|\sv_i^k - \sv_k^*\|^2 - \|\su_i^k - \tilde{\su}_{i+1}^k\|^2\right. \nonumber\\
& & - \left. \|\sv_i^k - \tilde{\sv}_{i+1}^k\|^2 \right) + \frac{1}{2}\left(\|\su_{i+1}^k - \tilde{\su}_{i+1}^k\|^2 + \|\sv_{i+1}^k - \tilde{\sv}_{i+1}^k\|^2 - \|\su_{i+1}^k - \su_k^*\|^2 - \|\sv_{i+1}^k - \sv_k^*\|^2 \right).\nonumber
\end{eqnarray}
From the convexity of $B(\su,\sv,\mu^k)$ with respect to $(\su,\sv)$ we have
\begin{eqnarray}\nonumber
\beta (n+1) \left(\theta_{i+1}^k - \theta_k^*\right) & = & B(\su_{i+1}^k, \sv_{i+1}^k, \mu^k) - B(\su_k^*, \sv_k^*, \mu^k) \\ & \overset{\eqref{opt-2}}{\leq} & -\beta(\su_{i+1}^k - \su_k^*, \sv_{i+1}^k - \sv_k^*)^\top(\p_{i+1}^k, \q_{i+1}^k),\label{37.6}
\end{eqnarray}
{where the equality again follows from the complementarity conditions $\x_i^k\circ\s_i^k = \frac{\mu^k}{\beta}\e$ and $\tau_i^k\kappa_i^k = \frac{\mu^k}{\beta}$.}
Adding \eqref{37.5} and \eqref{37.6} and using \eqref{p=v,q=u} yields
\begin{eqnarray}\label{lemma-descent:inequality-2}
& & \beta (n+1)(\theta_{i+1}^k - \theta_k^*) \\
& \leq & \beta(\tilde{\su}_{i+1}^k - \su_{i+1}^k, \tilde{\sv}_{i+1}^k - \sv_{i+1}^k)^\top(\sv_{i+1}^k, \su_{i+1}^k) \nonumber \\
& & + \frac{\beta}{2}\left(\|\su_i^k - \su_k^*\|^2 + \|\sv_i^k - \sv_k^*\|^2 - \|\su_i^k - \tilde{\su}_{i+1}^k\|^2 - \|\sv_i^k - \tilde{\sv}_{i+1}^k\|^2\right) \nonumber \\
& & + \frac{\beta}{2}\left(\|\su_{i+1}^k - \tilde{\su}_{i+1}^k\|^2 + \|\sv_{i+1}^k - \tilde{\sv}_{i+1}^k\|^2 - \|\su_{i+1}^k - \su_k^*\|^2 - \|\sv_{i+1}^k - \sv_k^*\|^2 \right).  \nonumber
\end{eqnarray}
Further adding \eqref{lemma-descent:inequality-1} and \eqref{lemma-descent:inequality-2} we have
\begin{eqnarray}
0 & \leq & (\tilde{\su}_{i+1}^k - \su_{i+1}^k, \tilde{\sv}_{i+1}^k - \sv_{i+1}^k)^\top(\sv_{i+1}^k - \sv_k^*, \su_{i+1}^k-\su_k^*) \nonumber\\
& & + \frac{1}{2}\left(\|\su_i^k - \su_k^*\|^2 + \|\sv_i^k - \sv_k^*\|^2 - \|\su_i^k - \tilde{\su}_{i+1}^k\|^2 - \|\sv_i^k - \tilde{\sv}_{i+1}^k\|^2 \right) \nn\\
& & + \frac{1}{2}\left(\|\su_{i+1}^k - \tilde{\su}_{i+1}^k\|^2 + \|\sv_{i+1}^k - \tilde{\sv}_{i+1}^k\|^2 - \|\su_{i+1}^k - \su_k^*\|^2 - \|\sv_{i+1}^k - \sv_k^*\|^2 \right) \nn\\
& \overset{\eqref{admm-typical-3},\eqref{p=v,q=u}}{=} & (\sv_i^k - \sv_{i+1}^k, \su_i^k - \su_{i+1}^k)^\top(\sv_{i+1}^k - \sv_k^*, \su_{i+1}^k-\su_k^*) \nn \\
& & + \frac{1}{2}\left(\|\su_i^k - \su_k^*\|^2 + \|\sv_i^k - \sv_k^*\|^2 - \|\su_i^k - \tilde{\su}_{i+1}^k\|^2 - \|\sv_i^k - \tilde{\sv}_{i+1}^k\|^2 \right) \nn \\
& & + \frac{1}{2}\left(\|\su_{i+1}^k - \tilde{\su}_{i+1}^k\|^2 + \|\sv_{i+1}^k - \tilde{\sv}_{i+1}^k\|^2 - \|\su_{i+1}^k - \su_k^*\|^2 - \|\sv_{i+1}^k - \sv_k^*\|^2 \right) \nn \\
& \overset{\eqref{identity}}{=} & \frac{1}{2}\left(\| \su_i^k - \su_k^*\|^2 + \| \sv_i^k - \sv_k^*\|^2 - \| \su_{i+1}^k - \su_k^*\|^2 - \| \sv_{i+1}^k - \sv_k^*\|^2 \right) - \frac{1}{2}\left(\| \su_i^k - \su_{i+1}^k \|^2 \right. \nn \\
&& \left.+\| \sv_i^k - \sv_{i+1}^k\|^2\right) + \frac{1}{2}\left(\|\su_i^k - \su_k^*\|^2 + \|\sv_i^k - \sv_k^*\|^2 - \|\su_i^k - \tilde{\su}_{i+1}^k\|^2 - \|\sv_i^k - \tilde{\sv}_{i+1}^k\|^2 \right) \nn \\
& & + \frac{1}{2}\left(\|\su_{i+1}^k - \tilde{\su}_{i+1}^k\|^2 + \|\sv_{i+1}^k - \tilde{\sv}_{i+1}^k\|^2 - \|\su_{i+1}^k - \su_k^*\|^2 - \|\sv_{i+1}^k - \sv_k^*\|^2 \right).\label{long}
\end{eqnarray}
{Recall that \eqref{admm-typical-3} and \eqref{p=v,q=u} imply}
\begin{equation}\label{norm-identity}
\|\su_{i+1}^k - \tilde{\su}_{i+1}^k\|^2 + \|\sv_{i+1}^k - \tilde{\sv}_{i+1}^k\|^2 = \|\p_i^k - \p_{i+1}^k\|^2 + \|\q_i^k - \q_{i+1}^k\|^2 = \|\su_i^k - \su_{i+1}^k\|^2 + \|\sv_i^k - \sv_{i+1}^k\|^2.
\end{equation}
Now, we use \eqref{norm-identity} and
\eqref{long} to obtain
\begin{equation}\label{decrease}
\frac{1}{2}\left(\|\su_i^k - \tilde{\su}_{i+1}^k\|^2 + \|\sv_i^k - \tilde{\sv}_{i+1}^k \|^2\right) \leq \|\su_i^k - \su_k^*\|^2 + \|\sv_i^k - \sv_k^*\|^2 - \|\su_{i+1}^k - \su_k^*\|^2 - \|\sv_{i+1}^k - \sv_k^*\|^2.
\end{equation}
Therefore, we conclude that $\{\|\su_i^k - \su_k^*\|^2 + \|\sv_i^k - \sv_k^*\|^2\}_{i\geq 0}$ is a monotonically decreasing sequence. {Note} that $\{(\su_i^k, \sv_i^k)\}_{i\geq 0}$ is a bounded sequence, there must exist a subsequence $\{(\su_{i_j}^k, \sv_{i_j}^k)\}_{j\geq 0}$ that converges to a limit point $(\bar{\su}, \bar{\sv})$. Since
\begin{equation*}
-\beta(\sv_{i_j}^k, \su_{i_j}^k) = -\beta(\p_{i_j}^k, \q_{i_j}^k) \in \partial B(\su_{i_j}^k, \sv_{i_j}^k, \mu^k),
\end{equation*}
by letting $j\rightarrow+\infty$ we have
\[
\left\{
\begin{array}{rcl}
\bar\x \circ \bar\s & = & \left(\mu^k/\beta\right) \cdot \e, \\
\bar\tau \bar\kappa & = & \mu^k/\beta, \\
\bar\sr & = & 0, \\
\bar\xi & = & -n-1, \\
\left(\bar\x, \bar\s, \bar\tau, \bar\kappa\right) & > & 0.
\end{array}
\right.
\]
Furthermore, from \eqref{decrease} we have
\begin{equation*}
\| \su_{i_j}^k - \tilde{\su}_{i_j+1}^k \|^2 + \| \sv_{i_j}^k - \tilde{\sv}_{i_j+1}^k\|^2 \rightarrow 0,
\end{equation*}
which implies that $(\tilde{\su}_{i_j+1}^k, \tilde{\sv}_{i_j+1}^k)$ converges to $\left(\bar{\su}, \bar{\sv}\right)$. Therefore, we have $Q\bar{\su} - \bar{\sv} = 0$ and
\begin{equation*}
\bar{\theta} = \frac{\left(\bar{\x}\right)^\top\bar{\s} + \bar{\tau}\bar{\kappa} + \left(\bar{\y}\right)^\top\bar{\sr}}{-\bar{\xi}} = \frac{\mu^k}{\beta}.
\end{equation*}
Due to the uniqueness of the central path solution, we have $(\bar{\su}, \bar{\sv})=(\su_k^*, \sv_k^*)$, which implies that
\begin{equation*}
\| \su_{i_j}^k - \su_k^* \|^2 + \| \sv_{i_j}^k - \sv_k^*\|^2 \longrightarrow 0, \quad \text{as} \ j\rightarrow+\infty.
\end{equation*}
Therefore, we conclude that
\begin{equation*}
\| \su_i^k - \su_k^* \|^2 + \| \sv_i^k - \sv_k^*\|^2 \longrightarrow 0, \quad \text{as} \ i\rightarrow+\infty.
\end{equation*}
This completes the proof.
\end{proof}
\begin{lemma}\label{Lemma:Bound-Inner-Loop}
The sequence $\{\|\su_0^k - \su_k^*\|^2 + \|\sv_0^k - \sv_k^*\|^2\}_{k\geq 0}$ is uniformly bounded, i.e., there exists a constant $C>0$ that does not depend on $k$ or $\mu^k$ such that
\begin{equation}\label{lemma-bound-1}
\|\su_0^k - \su_k^*\|^2 + \|\sv_0^k - \sv_k^*\|^2 \leq C.
\end{equation}
Moreover, the iterates $\{(\su_i^k, \sv_i^k)\}$, for $k \geq 1$, $i = 0,1,\ldots,N_k$, are uniformly bounded, i.e., there exists a constant $D>0$ that does not depend on $k$ or $\mu^k$ such that
\begin{equation}\label{lemma-bound-2}
\|\su_i^k\|^2 + \|\sv_i^k\|^2 \leq D, \quad \forall i = 0, 1, \ldots, N_k,
\end{equation}
where $N_k$ denotes the number of inner iterations in the $k$-th outer loop.
\end{lemma}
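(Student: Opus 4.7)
The plan is to prove the two assertions separately: Part (a), the bound \eqref{lemma-bound-1}, via induction on the outer index $k$; and Part (b), the bound \eqref{lemma-bound-2}, as a direct consequence of Part (a) combined with Lemma \ref{Lemma:Descent-Inner-Loop}.

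The key preliminary ingredient is a uniform bound on the central path $\mu\mapsto(\su^*(\mu),\sv^*(\mu))$ defined by \eqref{HSD-log-kkt}. Strict convexity of the log-barrier objective makes this map well defined for every $\mu>0$, and the implicit function theorem applied to the KKT system delivers its continuity on $(0,\mu^0]$. Since \eqref{HLP} admits a strictly complementary (hence bounded) optimal solution by part (iii) of the first theorem in Section \ref{section2:background}, the standard HSD central-path analysis also ensures a finite limit as $\mu\to 0^+$. Consequently there exists a constant $M>0$, depending only on the LP data, such that $\|\su_k^*\|^2 + \|\sv_k^*\|^2 \leq M^2$ for every $k\geq 0$.

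For Part (a), write $w_i^k := (\su_i^k,\sv_i^k)$ and $R_k := \|w_0^k\|$. By Theorem \ref{Theorem:Simplified} we have $\sr_{N_k}^k = 0$ and $\xi_{N_k}^k = -n-1$, so inspection of the update \eqref{update-next-iterate} shows that $w_0^{k+1}$ agrees with the full rescaling $\sqrt{\gamma}\,w_{N_k}^k$ except in the $\xi$-coordinate, where the reset value $-n-1$ replaces $-\sqrt{\gamma}(n+1)$; hence
\begin{equation*}
\bigl\|w_0^{k+1} - \sqrt{\gamma}\,w_{N_k}^k\bigr\| = (1-\sqrt{\gamma})(n+1).
\end{equation*}
Applying the triangle inequality, together with the descent estimate $\|w_{N_k}^k - w_k^*\|\leq \|w_0^k - w_k^*\|$ from Lemma \ref{Lemma:Descent-Inner-Loop} and the central-path bound $\|w_k^*\|\leq M$, yields the linear recursion
\begin{equation*}
R_{k+1} \;\leq\; \sqrt{\gamma}\,R_k + 2\sqrt{\gamma}\,M + (1-\sqrt{\gamma})(n+1).
\end{equation*}
Since $\sqrt{\gamma}<1$ is a contraction factor, iterating produces a uniform bound $R_k \leq \bar{R}$ independent of $k$, and one more triangle inequality delivers $\|w_0^k - w_k^*\|^2 \leq (R_k+M)^2 \leq (\bar{R}+M)^2 =: C$, which is exactly \eqref{lemma-bound-1}.

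Part (b) is then immediate: Lemma \ref{Lemma:Descent-Inner-Loop} combined with Part (a) gives $\|w_i^k - w_k^*\|^2 \leq \|w_0^k - w_k^*\|^2 \leq C$ for every $i$, so the triangle inequality with the central-path bound yields $\|w_i^k\|^2 \leq 2C + 2M^2 =: D$. I expect the main obstacle to be the rigorous justification of the uniform central-path bound: one must verify that $(\su^*(\mu),\sv^*(\mu))$ is well defined, continuous in $\mu$, and admits a finite limit as $\mu\to 0^+$, which relies on the HSD machinery already invoked in Section \ref{section2:background}. Once this bound is in place, the rest of the argument is nothing more than two rounds of the triangle inequality exploiting the contraction $\sqrt{\gamma}<1$.
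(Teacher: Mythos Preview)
Your argument is correct and in fact takes a cleaner route than the paper. Both proofs begin from the same preliminary fact---that the HSD central path $\{(\su_k^*,\sv_k^*)\}$ is uniformly bounded---and both derive \eqref{lemma-bound-2} from \eqref{lemma-bound-1} via Lemma \ref{Lemma:Descent-Inner-Loop} in exactly the way you describe. The divergence is in how \eqref{lemma-bound-1} itself is obtained. The paper's induction invokes the inner stopping rule $\|Q\su_{N_k}^k-\sv_{N_k}^k\|^2\le(\mu^k)^3$ and then performs a block-by-block analysis (bounding $\x-\x^*$ through $\s-\s^*$ via the complementarity relations, $\theta-\theta^*$ and $\y-\y^*$ through the linear constraints, etc.) to reach the quantitative estimate $\|\su_{N_k}^k-\su_k^*\|^2+\|\sv_{N_k}^k-\sv_k^*\|^2\le C_2\mu^k$, and only then applies the update rule \eqref{update-next-iterate}. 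You bypass all of that: the monotone descent of Lemma \ref{Lemma:Descent-Inner-Loop} already gives $\|w_{N_k}^k-w_k^*\|\le\|w_0^k-w_k^*\|$, and coupling this with \eqref{update-next-iterate} produces a scalar recursion $R_{k+1}\le\sqrt{\gamma}\,R_k+\text{const}$ whose contraction factor $\sqrt{\gamma}<1$ does all the work.

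What your approach buys is simplicity and robustness: it uses neither the stopping criterion \eqref{inner-stop} nor any of the component estimates \eqref{proof-stop-equivalent-bound-x}--\eqref{proof-stop-equivalent-bound-v}, and it sidesteps a delicate point in the paper's induction where the derived constant at step $k+1$ (namely $2\gamma C_2\mu^0+8\gamma C_1$) must close back to the inductive hypothesis $C$ even though $C_2$ itself depends on $D=2C+2C_1$. What the paper's route buys is the sharper intermediate bound $\|w_{N_k}^k-w_k^*\|^2=O(\mu^k)$, which decays with $k$; your argument only yields a uniform bound, but that is precisely what the lemma asserts.
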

\begin{proof}
We recall an important fact {(see, e.g., \cite{Ye-1994-hsd})} that the set of the central path points $\{(\su_k^*, \sv_k^*)\}$ with $\mu^k/\beta$, i.e., the solution of \eqref{HSD-log-kkt}, is uniformly bounded, where $0 < \mu^k \leq \mu^0$. That is, there exists a constant $C_1$ that does not depend on $k$ or $\mu^k$ such that
\begin{equation}\label{center-path-bound}
\|\su_k^*\|^2 + \|\sv_k^*\|^2 \leq C_1, \quad \forall k\geq 1.
\end{equation}
{This also implies that the following inequality holds for the limiting point $(\su^*, \sv^*)$ of the central path points $\{(\su_k^*, \sv_k^*)\}$, namely $\|\su^*\|^2 + \|\sv^*\|^2 \leq C_1$. Using Lemma~\ref{Lemma:Descent-Inner-Loop} and \eqref{inner-stop} with $\mu^k > 0$, we know that $N_k < +\infty$ is well-defined for any fixed $k \geq 0$. 
Now we claim that
\begin{equation}\label{lemma-3.5-claim}
\|\su_{N_k}^k - \su^*\|^2 + \|\sv_{N_k}^k - \sv^*\|^2 \rightarrow 0, \quad \text{as} \ k \rightarrow +\infty.
\end{equation}
Indeed, if the claim does not hold, then there exists $\delta > 0$, and a subsequence $\{ k_\ell \mid \ell=1,2,...\}$ with $k_\ell\uparrow \infty$ as $\ell\rightarrow \infty$ and $N' > 0$ such that
\begin{equation*}
\|\su_{N_{k_\ell}}^{k_\ell} - \su^*\|^2 + \|\sv_{N_{k_\ell}}^{k_\ell} - \sv^*\|^2 \geq \delta, \quad \text{for all } \ell > N'.
\end{equation*}
Using the property of central path, we have $\|\su_k^* - \su^*\|^2 + \|\sv_k^* - \sv^*\|^2 \rightarrow 0$ as $k \rightarrow +\infty$. Thus, there exists $N'' > 0$ such that
\begin{equation*}
\|\su_{N_{k_\ell}}^{k_\ell} - \su_{k_\ell}^*\|^2 + \|\sv_{N_{k_\ell}}^{k_\ell} - \sv_{k_\ell}^*\|^2 \geq \delta/2, \quad \text{for all } \ell > N''.
\end{equation*}
This contradicts with \eqref{inner-stop}.
Thus, \eqref{lemma-3.5-claim} holds and there exists a constant $D_1>0$ that does not depend on $k$ or $\mu^k$ such that
\begin{equation}\label{center-path-bound-aux}
\|\su_{N_k}^k\|^2 + \|\sv_{N_k}^k\|^2 \leq D_1, \quad \text{for all } k \geq 0.
\end{equation}

{Now we prove \eqref{lemma-bound-1}.} For $k = 0$, since we choose $\mu^0=\beta$, the initial point we choose in Algorithm~\ref{Algorithm:Simplified} satisfies \eqref{HSD-log-kkt} automatically, i.e., $(\su_0^*,\sv_0^*) = (\su_0^0,\sv_0^0)$, and \eqref{inner-stop}, i.e., $\|Q\su_0^0 - \sv_0^0\|^2 \leq \mu^0$, and $N_0 = 0$. Thus, we have
\begin{equation*}
\left\| \su_0^0 - \su_0^* \right\|^2 + \left\| \sv_0^0 - \sv_0^*\right\|^2 = 0.
\end{equation*}
For $k \geq 0$, by the definition of $\su$ and $\sv$, we have
\begin{eqnarray*}
\|\su_0^{k+1} - \su_{k+1}^*\|^2 + \|\sv_0^{k+1} - \sv_{k+1}^*\|^2 & = & \|\y_0^{k+1} - \y_{k+1}^*\|^2 + \|\x_0^{k+1} - \x_{k+1}^*\|^2 + (\tau_0^{k+1} - \tau_{k+1}^*)^2 \\
& & + (\theta_0^{k+1} - \theta_{k+1}^*)^2 + \|\s_0^{k+1} - \s_{k+1}^*\|^2 + (\kappa_0^{k+1} - \kappa_{k+1}^*)^2.
\end{eqnarray*}
Recall that (cf.\ \eqref{update-next-iterate}) the following equation holds true,
\begin{equation*}
(\y_0^{k+1}, \x_0^{k+1}, \s_0^{k+1}, \tau_0^{k+1}, \kappa_0^{k+1}, \theta_0^{k+1}) = \sqrt{\gamma}(\y_{N_k}^k, \x_{N_k}^k, \s_{N_k}^k, \tau_{N_k}^k, \kappa_{N_k}^k, \theta_{N_k}^k).
\end{equation*}
This implies that
\begin{eqnarray*}
& & \|\y_0^{k+1}\|^2 + \|\x_0^{k+1}\|^2 + (\tau_0^{k+1})^2 + (\theta_0^{k+1})^2 + \|\s_0^{k+1}\|^2 + (\kappa_0^{k+1})^2 \\
& = & \gamma\left(\|\y_{N_k}^k\|^2 + \|\x_{N_k}^k\|^2 + (\tau_{N_k}^k)^2 + (\theta_{N_k}^k)^2 + \|\s_{N_k}^k\|^2 + (\kappa_{N_k}^k)^2\right) \ \overset{\eqref{center-path-bound-aux}}{\leq} \ \gamma D_1.
\end{eqnarray*}
Putting these pieces together yields that
\begin{equation*}
\|\su_0^{k+1} - \su_{k+1}^*\|^2 + \|\sv_0^{k+1} - \sv_{k+1}^*\|^2 \leq 2\gamma D_1 + 2C_1.
\end{equation*}
Therefore, letting $C = 2\gamma D_1 + 2C_1$ proves \eqref{lemma-bound-1}.

Now we prove \eqref{lemma-bound-2}. Note that \eqref{lemma-bound-1} leads to \eqref{lemma-bound-2} immediately. To see this, note that combining \eqref{lemma-bound-1} and Lemma~\ref{Lemma:Descent-Inner-Loop} yields
\begin{equation*}
\|\su_i^k - \su_k^*\|^2 + \|\sv_i^k - \sv_k^*\|^2 \leq C, \quad \forall i=0,1,\ldots,N_k
\end{equation*}
which together with \eqref{center-path-bound} implies
\begin{equation*}
\|\su_i^k\|^2 + \|\sv_i^k\|^2 \leq 2C + 2C_1, \quad \forall i = 0, 1, \ldots, N_k
\end{equation*}
proving \eqref{lemma-bound-2} with $D=2C+2C_1$. }
\end{proof}

\begin{lemma}\label{Lemma:Complexity-Inner-Loop}
The number of iterations (denoted by $N_k$) needed in the inner loop of Algorithm \ref{Algorithm:Simplified} is
\begin{equation}\label{bound-N_k}
N_k \leq \log\left(\frac{2C\left(1+\left\|Q\right\|^2\right)}{{\mu^k}}\right) \left[\log\left(1+\min\left\{\frac{1}{C_3}, \frac{\mu^k}{4DC_3\beta}\right\}\right)\right]^{-1},
\end{equation}
where $C_3$ is defined as
\begin{eqnarray}
C_3 = \left[1+\frac{12\lambda_{\max}\left(A^\top A\right)}{\lambda_{\min}^2\left(AA^\top\right)}\cdot\max\left\{1, \|\bc\|^2, \|\bar{\bc}\|^2\right\}\right] \cdot \left[1 + \frac{6}{\|\bar{\bb}\|^2}\cdot\max\left\{\|\bb\|^2, \|A\|^2\right\} \right] > 1.\label{def-C3}
\end{eqnarray}
\end{lemma}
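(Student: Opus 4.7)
The plan is a three-step argument: (i) translate the inner stopping criterion $\|Q\su_i^k-\sv_i^k\|^2\le(\mu^k)^3$ into a distance condition on $\|\w_i^k-\w_k^*\|^2 := \|\su_i^k-\su_k^*\|^2+\|\sv_i^k-\sv_k^*\|^2$; (ii) establish a one-step linear contraction $\|\w_{i+1}^k-\w_k^*\|^2 \le (1+\rho_k)^{-1}\|\w_i^k-\w_k^*\|^2$ with explicit rate $\rho_k = \min\{1/C_3,\mu^k/(4DC_3\beta)\}$; (iii) iterate (ii) starting from the initialization bound $\|\w_0^k-\w_k^*\|^2\le C$ from Lemma~\ref{Lemma:Bound-Inner-Loop} to deduce \eqref{bound-N_k}. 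Step (i) is immediate: since $Q\su_k^*=\sv_k^*$,
\begin{equation*}
\|Q\su_i^k-\sv_i^k\|^2 = \|Q(\su_i^k-\su_k^*)-(\sv_i^k-\sv_k^*)\|^2 \le 2(1+\|Q\|^2)\|\w_i^k-\w_k^*\|^2,
\end{equation*}
so \eqref{inner-stop} is forced once $\|\w_i^k-\w_k^*\|^2\le(\mu^k)^3/[2(1+\|Q\|^2)]$. Step (iii) then amounts to solving $2C(1+\|Q\|^2)(1+\rho_k)^{-N_k}\le(\mu^k)^3$ for the smallest $N_k$, which yields exactly \eqref{bound-N_k}.

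The real work lies in step (ii). My starting point is the descent inequality \eqref{decrease} of Lemma~\ref{Lemma:Descent-Inner-Loop}; it suffices to prove a lower bound of the form
\begin{equation*}
\|\su_i^k-\tilde{\su}_{i+1}^k\|^2+\|\sv_i^k-\tilde{\sv}_{i+1}^k\|^2 \ge 2\rho_k\,\|\w_{i+1}^k-\w_k^*\|^2,
\end{equation*}
since inserting this into \eqref{decrease} produces the desired contraction. I would build this lower bound as the minimum of two complementary estimates, each tailored to one family of coordinates. For the components tied to the affine/indicator constraints ($\y,\theta,\sr,\xi$), use that $(\tilde{\su}_{i+1}^k,\tilde{\sv}_{i+1}^k)$ is the orthogonal projection of $(\su_i^k+\sv_i^k,\su_i^k+\sv_i^k)$ onto $\QCal=\{(\su,\sv):Q\su=\sv\}$ while $\w_k^*\in\QCal$; a Rayleigh-quotient argument mirroring the manipulations that produced \eqref{proof-stop-equivalent-bound-theta}--\eqref{proof-stop-equivalent-bound-y} relates the projection step to $\|\w_{i+1}^k-\w_k^*\|^2$ up to the factor $1/C_3$, whose precise shape in \eqref{def-C3} is dictated by $\|A\|,\|\bar{\bb}\|,\|\bar{\bc}\|$ and the extreme eigenvalues of $AA^\top$. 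For the log-barrier components ($\x,\tau,\s,\kappa$), the subgradient of $-\mu^k\log(\cdot)/\beta$ is strongly monotone with modulus at least $\mu^k/(\beta D)$ on the bounded set $\{\w:\|\w\|^2\le D\}$ guaranteed by \eqref{lemma-bound-2}, so the proximal step \eqref{Simple-Update:Barrier} contracts with rate $\mu^k/(\beta D)$, giving the $\mu^k/(4DC_3\beta)$ term.

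The main obstacle is the affine-component estimate. A projection residual $\|\su_i^k-\tilde{\su}_{i+1}^k\|^2+\|\sv_i^k-\tilde{\sv}_{i+1}^k\|^2$ measures how far $(\su_i^k+\sv_i^k,\su_i^k+\sv_i^k)$ fails to lie in $\QCal$, whereas the object one needs to bound is the distance from $\w_{i+1}^k$ to a \emph{specific} central-path point $\w_k^*\in\QCal$. The gap is not automatic: one must exploit the skew-symmetry of $Q$ together with the block structure encoding $A,\bar{\bb},\bar{\bc}$ to rule out near-orthogonality between the projection direction and $\w_{i+1}^k-\w_k^*$, and this is exactly where the constant $C_3$ in \eqref{def-C3} is forced upon us. The log-barrier estimate, by contrast, is essentially a one-line strong-monotonicity computation once the uniform bound $D$ from Lemma~\ref{Lemma:Bound-Inner-Loop} is in hand.
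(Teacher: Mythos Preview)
Your steps (i) and (iii) match the paper exactly. The gap is in step~(ii). Your plan is to lower-bound the projection residual
\[
R_i := \|\su_i^k-\tilde{\su}_{i+1}^k\|^2+\|\sv_i^k-\tilde{\sv}_{i+1}^k\|^2
\]
alone by $2\rho_k\|\w_{i+1}^k-\w_k^*\|^2$ and then plug into \eqref{decrease}. This does not go through, because $R_i$ by itself carries no information about the barrier-component distances $\|\x_{i+1}^k-\x_k^*\|^2,\|\s_{i+1}^k-\s_k^*\|^2,(\tau_{i+1}^k-\tau_k^*)^2,(\kappa_{i+1}^k-\kappa_k^*)^2$. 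Your appeal to ``the proximal step \eqref{Simple-Update:Barrier} contracts'' is aimed at the wrong object: $R_i$ is the residual of the \emph{projection} step \eqref{Simple-Update:Affine}, not of the prox step, and the prox map of $-\tfrac{\mu^k}{\beta}\log(\cdot)$ has no fixed point at $\x_k^*$ in isolation, so a contraction statement for the prox alone does not produce $\|\x_{i+1}^k-\x_k^*\|^2\lesssim R_i$. Moreover, your affine-component estimate is not independent of the barrier one: to recover $\|\y_{i+1}^k-\y_k^*\|$ from the row $A^\top\y=\bc\tau-\s-\bar{\bc}\theta$ of $Q\su=\sv$ (as in \eqref{proof-stop-equivalent-bound-y}) you must already control $\|\s_{i+1}^k-\s_k^*\|$ and $|\tau_{i+1}^k-\tau_k^*|$, which are barrier quantities.

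The paper fixes this by inserting the strong convexity one level higher. It re-derives \eqref{lemma-descent:inequality-2} but keeps the second-order remainder from the strong convexity of $B$ in $(\x,\s,\tau,\kappa)$, producing on the left the extra term $\frac{\mu^k}{2D\beta}\bigl(\|\x_{i+1}^k-\x_k^*\|^2+\|\s_{i+1}^k-\s_k^*\|^2+(\tau_{i+1}^k-\tau_k^*)^2+(\kappa_{i+1}^k-\kappa_k^*)^2\bigr)$. Combined with \eqref{lemma-descent:inequality-1} this gives a strengthened version of \eqref{decrease} whose left side is
\[
\tfrac{1}{2}\bigl(\|\su_{i+1}^k-\tilde{\su}_{i+1}^k\|^2+\|\sv_{i+1}^k-\tilde{\sv}_{i+1}^k\|^2\bigr)+\tfrac{\mu^k}{2D\beta}\bigl(\|\x_{i+1}^k-\x_k^*\|^2+\cdots\bigr),
\]
after one also proves $\|\su_{i+1}^k-\tilde{\su}_{i+1}^k\|^2+\|\sv_{i+1}^k-\tilde{\sv}_{i+1}^k\|^2\le R_i$ via monotonicity of $\partial B$. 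Only \emph{then} does one compare with $\|\w_{i+1}^k-\w_k^*\|^2$: the barrier components are already present on the left, and the $Q$-block estimates \eqref{complexity-final-y}--\eqref{complexity-final-y-plus-theta} bound $\|\y_{i+1}^k-\y_k^*\|^2+(\theta_{i+1}^k-\theta_k^*)^2$ by $C_3$ times the sum of the residual \emph{and} the barrier distances. Taking the minimum of $\tfrac{1}{2C_3}$ and $\tfrac{\mu^k}{4DC_3\beta}$ on the combined left side is what yields the contraction. So your ingredients are right, but strong convexity has to enter by strengthening the descent inequality itself, not by a separate lower bound on $R_i$.
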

\begin{proof}
It follows from Lemma \ref{Lemma:Bound-Inner-Loop} that $B\left(\su_i^k, \sv_i^k, \mu^k\right)$ is strongly convex with respect to $\left(\x, \s, \tau, \kappa\right)$. More specifically, we have
\begin{eqnarray*}
\nabla_{\x}^2 B(\su_i^k, \sv_i^k, \mu^k) = \Diag\left(\frac{\mu^k}{\x_i^k.*\x_i^k}\right) & \succeq & \frac{\mu^k}{D} \cdot \indmat, \\
\nabla_{\s}^2 B(\su_i^k, \sv_i^k, \mu^k) = \Diag\left(\frac{\mu^k}{\s_i^k.*\s_i^k}\right) & \succeq & \frac{\mu^k}{D} \cdot \indmat, \\
\nabla_{\tau}^2 B(\su_i^k, \sv_i^k, \mu^k) = \frac{\mu^k}{\left(\tau_i^k\right)^2} & \succeq & \frac{\mu^k}{D}, \\
\nabla_{\kappa}^2 B(\su_i^k, \sv_i^k, \mu^k) = \frac{\mu^k}{\left(\kappa_i^k\right)^2} & \succeq & \frac{\mu^k}{D}.
\end{eqnarray*}
Therefore, \eqref{lemma-descent:inequality-2} is changed to
\begin{eqnarray}\label{lemma-complexity:inequality-2}
& & \beta (n+1) (\theta_{i+1}^k - \theta_k^*) + \frac{\mu^k}{2D}\left(\|\x_{i+1}^k - \x_k^*\|^2 + \|\s_{i+1}^k - \s_k^*\|^2 + (\tau_{i+1}^k - \tau_k^*)^2 + (\kappa_{i+1}^k - \kappa_k^*)^2 \right) \nonumber \\
& \leq & \beta(\tilde{\su}_{i+1}^k - \su_{i+1}^k, \tilde{\sv}_{i+1}^k - \sv_{i+1}^k)^\top(\sv_{i+1}^k, \su_{i+1}^k) + \frac{\beta}{2}\left(\|\su_i^k - \su_k^*\|^2 + \|\sv_i^k - \sv_k^*\|^2 - \|\su_i^k - \tilde{\su}_{i+1}^k\|^2 - \|\sv_i^k - \tilde{\sv}_{i+1}^k\|^2\right) \nonumber \\
& & + \frac{\beta}{2}\left(\|\su_{i+1}^k - \tilde{\su}_{i+1}^k\|^2 + \|\sv_{i+1}^k - \tilde{\sv}_{i+1}^k\|^2 - \|\su_{i+1}^k - \su_k^*\|^2 - \|\sv_{i+1}^k - \sv_k^*\|^2 \right).
\end{eqnarray}
By summing \eqref{lemma-descent:inequality-1} and \eqref{lemma-complexity:inequality-2}, using \eqref{norm-identity} and observing
\begin{eqnarray*}
& & (\su_{i+1}^k - \tilde{\su}_{i+1}^k, \sv_{i+1}^k - \tilde{\sv}_{i+1}^k)^\top(\sv_k^*, \su_k^*) + (\tilde{\su}_{i+1}^k - \su_{i+1}^k, \tilde{\sv}_{i+1}^k - \sv_{i+1}^k)^\top(\sv_{i+1}^k, \su_{i+1}^k) \\
& = & (\tilde{\su}_{i+1}^k - \su_{i+1}^k, \tilde{\sv}_{i+1}^k - \sv_{i+1}^k)^\top(\sv_{i+1}^k - \sv_k^*, \su_{i+1}^k - \su_k^*) \\
& = & (\p_i^k - \p_{i+1}^k, \q_i^k - \q_{i+1}^k)^\top(\sv_{i+1}^k - \sv_k^*, \su_{i+1}^k - \su_k^*) \\
& = & (\sv_i^k - \sv_{i+1}^k, \su_i^k - \su_{i+1}^k)^\top(\sv_{i+1}^k - \sv_k^*, \su_{i+1}^k - \su_k^*) \\
& = & \frac{1}{2}\left(\|\su_i^k - \su_k^*\|^2 + \|\sv_i^k - \sv_k^*\|^2 - \|\su_{i+1}^k - \su_k^*\|^2 - \|\sv_{i+1}^k - \sv_k^*\|^2 \right) - \frac{1}{2}\left(\|\su_i^k - \su_{i+1}^k\|^2 + \|\sv_i^k - \sv_{i+1}^k\|^2\right),
\end{eqnarray*}
we obtain
\begin{eqnarray}
& & \frac{\mu^k}{2D\beta}\left(\|\x_{i+1}^k - \x_k^*\|^2 + \|\s_{i+1}^k - \s_k^*\|^2 + (\tau_{i+1}^k - \tau_k^*)^2 + (\kappa_{i+1}^k - \kappa_k^*)^2 \right) \nn \\
& & + \frac{1}{2}\left(\|\su_i^k - \tilde{\su}_{i+1}^k\|^2 + \|\sv_i^k - \tilde{\sv}_{i+1}^k\|^2\right) \nn \\
& \leq & \|\su_i^k - \su_k^*\|^2 + \|\sv_i^k - \sv_k^*\|^2 - \|\su_{i+1}^k - \su_k^*\|^2 - \|\sv_{i+1}^k - \sv_k^*\|^2.\label{strong-convexity-1}
\end{eqnarray}
Moreover, from \eqref{opt-2} we have
\begin{eqnarray}\label{opt-2-inequality-1}
0 \leq B(\su, \sv, \mu^k) - B(\su_{i+1}^k, \sv_{i+1}^k, \mu^k) +\beta(\su-\su_{i+1}^k, \sv-\sv_{i+1}^k)^\top(\p_{i+1}^k, \q_{i+1}^k),
\end{eqnarray}
and
\begin{eqnarray}\label{opt-2-inequality-2}
0 \leq B(\su, \sv, \mu^k) - B(\su_i^k, \sv_i^k, \mu^k) +\beta(\su-\su_i^k, \sv-\sv_i^k)^\top(\p_i^k, \q_i^k) .
\end{eqnarray}
Letting $(\su, \sv) = (\su_i^k, \sv_i^k)$ in \eqref{opt-2-inequality-1} and $(\su, \sv) = (\su_{i+1}^k, \sv_{i+1}^k)$ in \eqref{opt-2-inequality-2}, and adding them up lead to
\begin{eqnarray*}
0 & \leq & -(\su_i^k-\su_{i+1}^k, \sv_i^k-\sv_{i+1}^k)^\top(\p_i^k - \p_{i+1}^k, \q_i^k - \q_{i+1}^k) \\
& = & (\su_i^k-\su_{i+1}^k, \sv_i^k-\sv_{i+1}^k)^\top(\su_{i+1}^k - \tilde{\su}_{i+1}^k, \sv_{i+1}^k - \tilde{\sv}_{i+1}^k) \\
& = & \frac{1}{2}\left(\|\su_i^k- \tilde{\su}_{i+1}^k\|^2 - \|\su_{i+1}^k-\tilde{\su}_{i+1}^k\|^2 - \|\su_i^k - \su_{i+1}^k\|^2\right) \\ & & + \frac{1}{2}\left(\|\sv_i^k- \tilde{\sv}_{i+1}^k\|^2 - \|\sv_{i+1}^k-\tilde{\sv}_{i+1}^k\|^2 - \|\sv_i^k - \sv_{i+1}^k\|^2\right),
\end{eqnarray*}
which implies that
\begin{equation}\label{strong-convexity-2}
\|\su_{i+1}^k-\tilde{\su}_{i+1}^k\|^2 + \|\sv_{i+1}^k-\tilde{\sv}_{i+1}^k\|^2 \leq \|\su_i^k - \tilde{\su}_{i+1}^k\|^2 + \|\sv_i^k - \tilde{\sv}_{i+1}^k\|^2.
\end{equation}
Combining \eqref{strong-convexity-1} and \eqref{strong-convexity-2} yields
\begin{eqnarray}
& & \frac{1}{2}\left(\|\su_{i+1}^k - \tilde{\su}_{i+1}^k\|^2 + \|\sv_{i+1}^k - \tilde{\sv}_{i+1}^k\|^2 \right) \nn \\
&& + \frac{\mu^k}{2D\beta}\left(\|\x_{i+1}^k - \x_k^*\|^2 + \|\s_{i+1}^k - \s_k^*\|^2 + (\tau_{i+1}^k - \tau_k^*)^2 + (\kappa_{i+1}^k - \kappa_k^*)^2 \right) \nn \\
& \leq & \|\su_i^k - \su_k^*\|^2 + \|\sv_i^k - \sv_k^*\|^2 - \|\su_{i+1}^k - \su_k^*\|^2 - \|\sv_{i+1}^k - \sv_k^*\|^2.\label{complexity-1/2+mu/2Dbeta}
\end{eqnarray}
Furthermore, we have (by denoting $C_4 := 6\lambda_{\max}(A^\top A) \max \{1, \|\bc\|^2, \|\bar{\bc}\|^2\} / \lambda_{\min}^2 (AA^\top)$)
\begin{eqnarray}
& & \|\y_{i+1}^k - \y_k^*\|^2 \nn \\
& \leq & 2\left(\|\y_{i+1}^k - \tilde{\y}_{i+1}^k\|^2 + \|\tilde{\y}_{i+1}^k - \y_k^*\|^2\right) \nn\\
& = & 2\|\y_{i+1}^k - \tilde{\y}_{i+1}^k\|^2 + 2\|(AA^\top)^{-1}A(A^\top\tilde{\y}_{i+1}^k - A^\top\y_k^*)\|^2 \nn\\
& = & 2\|\y_{i+1}^k - \tilde{\y}_{i+1}^k\|^2 + 2\|(AA^\top)^{-1}A(\bc\tilde{\tau}_{i+1}^k - \tilde{\s}_{i+1}^k-\bar{\bc}\tilde{\theta}_{i+1}^k-\bc\tau_k^*+\s_k^*+\bar{\bc}\theta_k^*)\|^2 \nn\\
& \leq & C_4\left(\|\tilde{\s}_{i+1}^k - \s_k^*\|^2 + (\tilde{\tau}_{i+1}^k - \tau_k^*)^2 + (\tilde{\theta}_{i+1}^k - \theta_k^*)^2\right) + 2\|\y_{i+1}^k - \tilde{\y}_{i+1}^k\|^2 \nn\\
& \leq & 2C_4\left(\|\s_{i+1}^k - \tilde{\s}_{i+1}^k\|^2 + (\tau_{i+1}^k - \tilde{\tau}_{i+1}^k)^2 + \|\s_{i+1}^k - \s_k^*\|^2 + (\tau_{i+1}^k - \tau_k^*)^2\right) \nn\\
& & + C_4(\theta_{i+1}^k - \theta_k^*)^2 + 2\|\y_{i+1}^k - \tilde{\y}_{i+1}^k\|^2,\label{complexity-final-y}
\end{eqnarray}
and (by denoting $C_5 := 3\max\{\|\bb\|^2, \|A\|^2\}/\|\bar{\bb}\|^2$)
\begin{eqnarray}\label{complexity-final-theta}
&& (\theta_{i+1}^k - \theta_k^*)^2 \ = \ (\tilde{\theta}_{i+1}^k - \theta_k^*)^2 \\
& = & \frac{1}{\|\bar{\bb}\|^2}\|\bb\tilde{\tau}_{i+1}^k - A\tilde{\x}_{i+1}^k + \tilde{\sr}_{i+1}^k -\bb\tau_k^*+A\x_k^*-\sr_k^*\|^2 \nn\\
& \leq & C_5 \left(\|\tilde{\x}_{i+1}^k - \x_k^*\|^2 + (\tilde{\tau}_{i+1}^k - \tau_k^*)^2 + \|\tilde{\sr}_{i+1}^k - \sr_k^*\|^2\right) \nn\\
& = & C_5\left(\|\tilde{\x}_{i+1}^k - \x_k^*\|^2 + (\tilde{\tau}_{i+1}^k - \tau_k^*)^2 + \|\tilde{\sr}_{i+1}^k - \sr_{i+1}^k\|^2\right) \nn\\
& \leq & 2C_5\left(\|\x_{i+1}^k - \tilde{\x}_{i+1}^k\|^2 + (\tau_{i+1}^k - \tilde{\tau}_{i+1}^k)^2 + \|\x_{i+1}^k - \x_k^*\|^2 + (\tau_{i+1}^k - \tau_k^*)^2\right) + C_5\|\tilde{\sr}_{i+1}^k - \sr_{i+1}^k\|^2. \nn
\end{eqnarray}
By summing up \eqref{complexity-final-y} and \eqref{complexity-final-theta}, we have
\begin{eqnarray}\label{complexity-final-y-plus-theta}
& & \|\y_{i+1}^k - \y_k^*\|^2 + (\theta_{i+1}^k - \theta_k^*)^2 \\
& \leq & 2(C_4+C_5) \left(\|\su_{i+1}^k - \tilde{\su}_{i+1}^k\|^2 + \|\sv_{i+1}^k - \tilde{\sv}_{i+1}^k\|^2\right) \nn\\
& & + 2C_4\left(\|\s_{i+1}^k - \s_k^*\|^2 + (\tau_{i+1}^k - \tau_k^*)^2 + (\theta_{i+1}^k - \theta_k^*)^2\right) + 2C_5\left(\|\x_{i+1}^k - \x_k^*\|^2 + (\tau_{i+1}^k - \tau_k^*)^2\right) \nn\\
& \overset{\eqref{complexity-final-theta}}{\leq} & 2(C_4+C_5) \left(\|\su_{i+1}^k - \tilde{\su}_{i+1}^k\|^2 + \|\sv_{i+1}^k - \tilde{\sv}_{i+1}^k\|^2\right)+ 2C_4\left(\|\s_{i+1}^k - \s_k^*\|^2 + (\tau_{i+1}^k - \tau_k^*)^2 \right) \nn\\
& & + 2C_5\left(\|\x_{i+1}^k - \x_k^*\|^2 + (\tau_{i+1}^k - \tau_k^*)^2\right) + 4C_4C_5\left(\|\x_{i+1}^k - \tilde{\x}_{i+1}^k\|^2 + (\tau_{i+1}^k - \tilde{\tau}_{i+1}^k)^2\right) \nn\\
& & + 4C_4C_5\left(\|\x_{i+1}^k - \x_k^*\|^2 + (\tau_{i+1}^k - \tau_k^*)^2\right) + 2C_4C_5\|\tilde{\sr}_{i+1}^k - \sr_{i+1}^k\|^2 \nn\\
& \overset{\eqref{def-C3}}{\leq} & C_3\left(\|\su_{i+1}^k - \tilde{\su}_{i+1}^k\|^2 + \|\sv_{i+1}^k - \tilde{\sv}_{i+1}^k\|^2\right) + C_3\left(\|\x_{i+1}^k - \x_k^*\|^2 + \|\s_{i+1}^k - \s_k^*\|^2 + (\tau_{i+1}^k - \tau_k^*)^2\right).\nn
\end{eqnarray}
Noting that $\|\sr_{i+1}^k - \sr_k^*\|^2 = (\xi_{i+1}^k - \xi_k^*)^2 = 0$, we have
\begin{eqnarray*}
& & \min\left\{\frac{1}{2C_3}, \frac{\mu^k}{4DC_3\beta}\right\}\left(\|\su_{i+1}^k - \su_k^*\|^2 + \|\sv_{i+1}^k - \sv_k^*\|^2\right) \\
& \leq & \frac{\mu^k}{4DC_3\beta}\left(\|\x_{i+1}^k - \x_k^*\|^2 + \|\s_{i+1}^k - \s_k^*\|^2 + (\tau_{i+1}^k - \tau_k^*)^2 + (\kappa_{i+1}^k - \kappa_k^*)^2 \right) \\
& & + \min\left\{\frac{1}{2C_3}, \frac{\mu^k}{4DC_3\beta}\right\}\left(\|\y_{i+1}^k - \y_k^*\|^2 + (\theta_{i+1}^k - \theta_k^*)^2\right) \\
& \leq & \frac{\mu^k}{4D\beta}\left(\|\x_{i+1}^k - \x_k^*\|^2 + \|\s_{i+1}^k - \s_k^*\|^2 + (\tau_{i+1}^k - \tau_k^*)^2 + (\kappa_{i+1}^k - \kappa_k^*)^2 \right) \\
&& + \frac{1}{2}\left(\|\su_{i+1}^k - \tilde{\su}_{i+1}^k\|^2 + \|\sv_{i+1}^k - \tilde{\sv}_{i+1}^k\|^2\right) + \frac{\mu^k}{4D\beta}\cdot\left(\|\x_{i+1}^k - \x_k^*\|^2 + \|\s_{i+1}^k - \s_k^*\|^2 + (\tau_{i+1}^k - \tau_k^*)^2\right) \\
& \leq & \frac{1}{2}\left(\|\su_{i+1}^k - \tilde{\su}_{i+1}^k\|^2 + \|\sv_{i+1}^k - \tilde{\sv}_{i+1}^k\|^2\right) \\
&& + \frac{\mu^k}{2D\beta}\left(\|\x_{i+1}^k - \x_k^*\|^2 + \|\s_{i+1}^k - \s_k^*\|^2 + (\tau_{i+1}^k - \tau_k^*)^2 + (\kappa_{i+1}^k - \kappa_k^*)^2 \right)  \\
& \overset{\eqref{complexity-1/2+mu/2Dbeta}}{\leq} & \|\su_i^k - \su_k^*\|^2 + \|\sv_i^k - \sv_k^*\|^2 - \|\su_{i+1}^k - \su_k^*\|^2 - \|\sv_{i+1}^k - \sv_k^*\|^2,
\end{eqnarray*}
where the second inequality is due to $C_3>1$ and \eqref{complexity-final-y-plus-theta}.
Therefore, we obtain that
\begin{eqnarray*}
\|\su_{N_k}^k - \su_k^*\|^2 + \|\sv_{N_k}^k - \sv_k^*\|^2 & \leq & \left(1+\min\left\{\frac{1}{2C_3}, \frac{\mu^k}{4DC_3\beta}\right\}\right)^{-N_k}\left(\|\su_0^k - \su_k^*\|^2 + \|\sv_0^k - \sv_k^*\|^2\right)  \\
& \leq & C\left(1+\min\left\{\frac{1}{2C_3}, \frac{\mu^k}{4DC_3\beta}\right\}\right)^{-N_k}.
\end{eqnarray*}
On the other hand, we have
\[\|Q\su_{N_k}^k-\sv_{N_k}^k\|^2 = \|Q(\su_{N_k}^k-\su_k^*)-(\sv_{N_k}^k-\sv_k^*)\|^2 \leq 2\|Q\|^2\|\su_{N_k}^k-\su_k^*\|^2+2\|\sv_{N_k}^k-\sv_k^*\|^2.\]
Therefore, The number of iterations (denoted by $N_k$) needed in the inner loop of Algorithm \ref{Algorithm:Simplified} should satisfy that
\begin{equation*}
2C\left(1+\left\|Q\right\|^2\right)\left(1+\min\left\{\frac{1}{2C_3}, \frac{\mu^k}{4DC_3\beta}\right\}\right)^{-N_k} \leq \mu^k,
\end{equation*}
which proves \eqref{bound-N_k}.
\end{proof}

Now we are ready to present the main result of the iteration complexity of Algorithm \ref{Algorithm:Simplified}.
\begin{theorem}\label{Theorem:Complexity-Total-Loop}
{Suppose that the \textsf{ABIP} is terminated when $\mu^k < \varepsilon$, where $\varepsilon>0$ is a pre-given tolerance.} The total \textsf{IPM} and \textsf{ADMM} iteration complexities of \textsf{ABIP} are respectively
\begin{equation*}
T_{\textsf{IPM}} = O\left(\log\left(\frac{1}{\varepsilon}\right)\right), \qquad T_{\textsf{ADMM}} = O\left(\frac{\kappa_A^2\left\|Q\right\|^2}{\varepsilon}\log\left(\frac{1}{\varepsilon}\right)\right).
\end{equation*}
where 
$\kappa_A := \lambda_{\max}(A^\top A)/\lambda_{\min}(AA^\top)$.
\end{theorem}
\begin{proof}
Note that \textsf{ABIP} consists of two types of loops: inner loops and outer loops. In the outer loop, a log-barrier penalty problem is formed with a penalty parameter $\mu^k$, and in the inner loop, this log-barrier penalty problem is solved by a two-block ADMM. The outer loop is terminated when $\mu^k < \varepsilon$, with a pre-given tolerance $\varepsilon>0$. It is then easy to see that the number of outer loops, i.e., the number of interior point iterations, is
\begin{equation*}
T_{\textsf{IPM}} = \left\lceil \frac{\log(\mu^0/\varepsilon)}{\log(1/\gamma)}\right\rceil.
\end{equation*}
For the {\it total}\/ number of ADMM steps {(rather than the ADMM steps between two IPM outer loops)}, we have the following estimate:
\begin{eqnarray*}
T_{\textsf{ADMM}} = \sum_{k=1}^{T_{\textsf{IPM}}} N_k & = & \sum_{k=1}^{T_{\textsf{IPM}}} \log\left(\frac{2C\left(1+\|Q\|^2\right)}{{\mu^k}}\right) \left[\log\left(1+\min\left\{\frac{1}{C_3}, \frac{\mu^k}{4DC_3\beta}\right\}\right)\right]^{-1} \\
& = & \sum_{k=1}^{T_{\textsf{IPM}}} \log\left(\frac{2C\left(1+\|Q\|^2\right)/\mu^0}{{\gamma^k}}\right)\left[\log\left(1+\min\left\{\frac{1}{C_3}, \frac{\mu^0\gamma^k}{4DC_3\beta}\right\}\right)\right]^{-1} \\
& = & O\left(\frac{\kappa_A^2\|Q\|^2}{\varepsilon}\log\left(\frac{1}{\varepsilon}\right)\right).
\end{eqnarray*}
This completes the proof.
\end{proof}

\begin{corollary}
Assume that $m=O(n)$, the total arithmetic operation complexity of \textsf{ABIP} is
\begin{equation*}
T = O\left(n^3 + \frac{n^2 \kappa_A^2\|Q\|^2}{\varepsilon} \log\left(\frac{1}{\varepsilon}\right)\right).
\end{equation*}
\end{corollary}
\begin{proof}
\textsf{ABIP} can be divided into two parts. In the first part, we decompose the matrix which requires $O(n^3)$ arithmetic operations. In the second part, we perform several matrix-vector and vector-vector operations where each ADMM iteration requires $O(n^2)$ arithmetic operations. So Theorem~\ref{Theorem:Complexity-Total-Loop} implies the desired result. This completes the proof.
\end{proof}

\section{Implementation Details}\label{section:implementation}
\subsection{Termination Criteria}\label{Section3:Termination}
So far we have not discussed how to terminate the outer loop of \textsf{ABIP} yet. In our implementation, we chose the one that is used in \textsf{SCS} \cite{Donoghue-2016-conic}.
Specifically, we run the algorithm until it finds a primal-dual optimal solution or a certificate of primal or dual infeasibility of the original LP pair \eqref{prob:LP}, up to some tolerance. The detailed procedure is as follows. If $\tau_i^k>0$ in $\su_i^k$, then let
\begin{equation*}
\left(\frac{\x_i^k}{\tau_i^k}, \frac{\s_i^k}{\tau_i^k}, \frac{\y_i^k}{\tau_i^k}\right)
\end{equation*}
be the candidate solution which is guaranteed to satisfy the feasibility condition. It thus suffices to check if the residuals
\begin{eqnarray*}
\pres_i^k = \frac{A\x_i^k}{\tau_i^k} - \bb, \ \dres_i^k = \frac{A^\top\y_i^k}{\tau_i^k} + \frac{\s_i^k}{\tau_i^k} - \bc, \ \dgap_i^k  =  \frac{\bc^\top\x_i^k}{\tau_i^k} - \frac{\bb^\top\y_i^k}{\tau_i^k},
\end{eqnarray*}
are small. More specifically, we terminate the algorithm if
\begin{align*}
\|\pres_i^k\| \leq \varepsilon_{\pres}(1+\|\bb\|), \ \|\dres_i^k\|  \leq \varepsilon_{\dres}(1+\|\bc\|), \ \|\dgap_i^k\| \leq \varepsilon_{\dgap}(1+|\bc^\top\x|+|\bb^\top\y|),
\end{align*}
are met. The quantities $\varepsilon_{\pres}$, $\varepsilon_{\dres}$ and $\varepsilon_{\dgap}$ are the primal residual, dual {residual} and duality gap tolerances, respectively.

On the other hand, if the current iterates satisfy that
\begin{equation}\label{Criteria:Primal-Infeasibility}
\|A^\top\y_i^k + \s_i^k\| \leq \varepsilon_{\pinfeas}\left(\frac{\bb^\top\y_i^k}{\|\bb\|}\right),
\end{equation}
then $\frac{\y_i^k}{\bb^\top\y_i^k}$ is an approximate certificate for the primal infeasibility with the tolerance $\varepsilon_{\pinfeas}$; or if the current iterates satisfy that
\begin{equation}\label{Criteria:Dual-Infeasibility}
\|A\x_i^k\| \leq \varepsilon_{\dinfeas}\left(\frac{-\bc^\top\x_i^k}{\|\bc\|}\right),
\end{equation}
then $-\frac{\x_i^k}{\bc^\top\x_i^k}$ is an approximate certificate for the dual infeasibility with the tolerance $\varepsilon_{\dinfeas}$.

\subsection{Over Relaxation}
In practice, we implemented some techniques that can accelerate the algorithm. One of them is to incorporate a relaxation parameter in the ADMM \cite{Eckstein-1992-douglas}. Specifically, when applied to Algorithm~\ref{Algorithm:Simplified}, we replace all $\tilde{\su}_{i+1}^k$ in the $\su$- and $\sv$-updates with
\begin{equation*}
\alpha\tilde{\su}_{i+1}^k + (1-\alpha)\su_i^k,
\end{equation*}
where $\alpha\in\left[0,2\right]$ is a relaxation parameter. {In that case, \eqref{Simple-Update:Dual}, \eqref{Simple-Update:Barrier-Y}, \eqref{Simple-Update:Barrier-X}, \eqref{Simple-Update:Barrier-tau} and \eqref{Simple-Update:Barrier-theta} become}
\begin{equation*}
\sv_{i+1}^k = \sv_i^k - \alpha\tilde{\su}_{i+1}^k - (1-\alpha)\su_i^k + \su_{i+1}^k,
\end{equation*}
and
{ \begin{eqnarray*}
\y_{i+1}^k & = & \alpha\tilde{\y}_{i+1}^k + (1-\alpha)\y_i^k,   \\
\x_{i+1}^k & = & \frac{1}{2}\left[\left(\alpha\tilde{\x}_{i+1}^k + (1-\alpha)\x_i^k - \s_i^k\right) + \sqrt{\left(\alpha\tilde{\x}_{i+1}^k + (1-\alpha)\x_i^k - \s_i^k\right)\circ\left(\alpha\tilde{\x}_{i+1}^k + (1-\alpha)\x_i^k - \s_i^k\right)+\frac{4\mu^k}{\beta}}\right], \\
\tau_{i+1}^k & = & \frac{1}{2}\left[\left(\alpha\tilde{\tau}_{i+1}^k + (1-\alpha)\tau_i^k - \kappa_i^k\right) + \sqrt{\left(\alpha\tilde{\tau}_{i+1}^k + (1-\alpha)\tau_i^k - \kappa_i^k\right)\circ\left(\alpha\tilde{\tau}_{i+1}^k + (1-\alpha)\tau_i^k - \kappa_i^k\right)+\frac{4\mu^k}{\beta}}\right], \\
\theta_{i+1}^k & = & \alpha\tilde{\theta}_{i+1} + (1-\alpha) \theta_i^k.
\end{eqnarray*}}
When $\alpha=1$, this reduces to the corresponding update in Algorithm~\ref{Algorithm:Simplified} given above; when $\alpha>1$, this is known as \textit{over-relaxation}; when $\alpha<1$, this is known as \textit{under-relaxation}. Previous works \cite{Eckstein-1994-parallel, Donoghue-2013-splitting} suggest that the performance of ADMM can be improved significantly if one sets $\alpha\approx 1.5$.

\subsection{Barzilai-Borwein Spectral Method for Selecting $\beta$}\label{Section3:Selection}

The performance of ADMM highly depends on the choice of $\beta$. One way to accelerate ADMM is to adaptively adjust $\beta$ (see also \cite{He-2000-alternating, Wen-2010-alternating}). In practice, we generated a sequence of $\{\beta^k\}_{k\geq 0}$, where $\beta^k$ is only used in the $k$-th outer iteration. Intuitively, the speed of traveling along the central path is determined by $\mu^k$ and $\beta$, implying that adjusting $\beta$ in each outer iteration based on the iterates is equivalent to a predictor-corrector method \cite{Mehrotra-1992-implementation}.
The way we adaptively adjust $\beta$ is based on the Barzilai-Borwein spectral method \cite{Barzilai-1988-two,Xu-2017-adaptive}, which is proven to be superior than the residue balancing approach \cite{Wen-2010-alternating}. Indeed, for each $k\geq 0$, we select $\beta^k$ using \textbf{spectral stepsize estimation} and \textbf{safeguarding} at the beginning of the $k$-th outer iteration.

\textbf{Spectral stepsize estimation:} We calculate the first three iterates, i.e., $\left(\tilde{\su}_i^k, \su_i^k, \sv_i^k\right)_{i=0}^{i=2}$, {using a fixed $\beta^k>0$ and an initial point $\left(\y_0^k, \x_0^k, \s_0^k, \tau_0^k, \kappa_0^k, \theta_0^k\right)$ obtained by~\eqref{update-next-iterate} and $\sr_0^k = 0$, $\xi_0^k=-n-1$.} Then we estimate the curvature, i.e.,
\begin{eqnarray*}
\hat{\sv}_1^k = \sv_0^k - \alpha\tilde{\su}_1^k - (1-\alpha) \su_0^k +\alpha\su_1^k, \quad
\hat{\sv}_2^k = \sv_1^k - \alpha\tilde{\su}_2^k - (1-\alpha) \su_1^k + \alpha\su_2^k,
\end{eqnarray*}
and
\begin{eqnarray*}
\Delta\hat{\sv}^k  =   \hat{\sv}_2^k - \hat{\sv}_1^k, \quad
\Delta\tilde{\su}^k  =   \alpha\left(\tilde{\su}_2^k - \tilde{\su}_1^k\right) + (1-\alpha)\left(\su_1^k - \su_0^k\right).
\end{eqnarray*}
As is typical in Barzilai-Borwein stepsize gradient methods \cite{Barzilai-1988-two}, the spectral stepsizes $\varphi_{\textsf{SD}}^k$ and $\varphi_{\textsf{MG}}^k$ have the closed-form solutions as
\begin{eqnarray*}
\varphi_{\textsf{SD}}^k = \frac{\left\langle\Delta\hat{\sv}^k, \Delta\hat{\sv}^k\right\rangle}{\left\langle\Delta\hat{\sv}^k, \Delta\tilde{\su}^k\right\rangle}, \quad
\varphi_{\textsf{MG}}^k = \frac{\left\langle\Delta\hat{\sv}^k, \Delta\tilde{\su}^k\right\rangle}{\left\langle\Delta\tilde{\su}^k, \Delta\tilde{\su}^k\right\rangle},
\end{eqnarray*}
where \textsf{SD} and \textsf{MG} refer to \textit{steepest descent} and \textit{minimum gradient}, respectively, and $\varphi_{\textsf{SD}}^k \geq \varphi_{\textsf{MG}}^k$ due to the Cauchy-Schwarz inequality. We then consider the hybrid stepsize rule proposed in \cite{Zhou-2006-gradient},
\begin{equation}
\varphi^k = \left\{\begin{array}{ll} \varphi_{\textsf{MG}}^k, & \text{if} \ 2\varphi_{\textsf{MG}}^k >\varphi_{\textsf{SD}}^k, \\ \varphi_{\textsf{SD}}^k - \varphi_{\textsf{MG}}^k/2, & \text{otherwise.} \end{array}\right.
\end{equation}
Similarly, we calculate
\begin{equation*}
\Delta\su^k = -\left(\su_2^k - \su_1^k\right),
\end{equation*}
and the spectral stepsizes $\psi_{\textsf{SD}}^k$ and $\psi_{\textsf{MG}}^k$ as
\begin{eqnarray*}
\psi_{\textsf{SD}}^k = \frac{\left\langle\Delta\hat{\sv}^k, \Delta\hat{\sv}^k\right\rangle}{\left\langle\Delta\hat{\sv}^k, \Delta\su^k\right\rangle}, \quad
\psi_{\textsf{MG}}^k = \frac{\left\langle\Delta\hat{\sv}^k, \Delta\su^k\right\rangle}{\left\langle\Delta\su^k, \Delta\su^k\right\rangle},
\end{eqnarray*}
and consider the hybrid stepsize rule again,
\begin{equation}
\psi^k = \left\{ \begin{array}{ll} \psi_{\textsf{MG}}^k, & \text{if} \ 2\psi_{\textsf{MG}}^k > \psi_{\textsf{SD}}^k, \\ \psi_{\textsf{SD}}^k - \psi_{\textsf{MG}}^k/2, & \text{otherwise.} \end{array}\right.
\end{equation}

\textbf{Safeguarding:} 
We suggest a safeguarding heuristic by accessing the quality of the curvature estimates, i.e., only update the stepsize if the curvature estimates satisfy a reliability criterion. More specifically, we consider the following quantities defined in \cite{Xu-2017-adaptive}:
\begin{eqnarray*}
\varphi_{\textsf{cor}}^k = \frac{\left\langle\Delta\hat{\sv}^k, \Delta\tilde{\su}^k\right\rangle}{\left\|\Delta\hat{\sv}^k\right\|\left\|\Delta\tilde{\su}^k\right\|}, \quad
\psi_{\textsf{cor}}^k = \frac{\left\langle\Delta\hat{\sv}^k, \Delta\su^k\right\rangle}{\left\|\Delta\hat{\sv}^k\right\|\left\|\Delta\su^k\right\|}.
\end{eqnarray*}
The spectral stepsizes are updated only if the estimation is sufficiently reliable, i.e.,
\begin{equation}
\hat{\beta}^k = \left\{\begin{array}{ll}
\sqrt{\varphi^k\psi^k}, & \text{if} \ \varphi_{\textsf{cor}}^k>\varepsilon_{\textsf{cor}} \ \text{and} \ \psi_{\textsf{cor}}^k>\varepsilon_{\textsf{cor}}, \\
\varphi^k, & \text{if} \ \varphi_{\textsf{cor}}^k>\varepsilon_{\textsf{cor}} \ \text{and} \ \psi_{\textsf{cor}}^k\leq \varepsilon_{\textsf{cor}}, \\
\psi^k, & \text{if} \ \varphi_{\textsf{cor}}^k\leq\varepsilon_{\textsf{cor}} \ \text{and} \ \psi_{\textsf{cor}}^k>\varepsilon_{\textsf{cor}}, \\
\beta^k, & \text{otherwise}, \end{array}\right.
\end{equation}
where $\varepsilon_{\textsf{cor}}>0$ is a quality threshold for the curvature estimates. {Notice that $\hat{\beta}^k=\beta^k$ when both curvature estimates are deemed inaccurate while $\hat{\beta}^k\neq\beta^k$ but $\hat{\beta}^k\approx\beta^k$ implies that $\hat{\beta}^k$ and $\beta^k$ are both suitable to be used in the $k$-th outer iteration. }

{{\textbf{A heuristic selection:} We select a threshold $\varepsilon_{\textsf{penalty}}>0$ and set}
\begin{enumerate}
\item If $\hat{\beta}^k\neq\beta^k$ and $\left|\hat{\beta}^k-\beta^k\right|\leq\varepsilon_{\textsf{penalty}}$, then $\frac{\beta^k+\hat{\beta}^k}{2}$ will be used in the $k$-th outer iteration.
\item If $\hat{\beta}^k\neq\beta^k$ and $\left|\hat{\beta}^k-\beta^k\right|>\varepsilon_{\textsf{penalty}}$, then $\beta^k=\hat{\beta}^k$ and we redo the spectral step-size estimation and safeguarding with the same initial point.
\item If $\hat{\beta}^k=\beta^k$, then the spectral step-size estimation and safeguarding will be continued based on the subsequent iterates.
\end{enumerate}}


\subsection{Linear System Solver}\label{Section4:Factorization}
In this subsection, we discuss how to efficiently compute the projection onto the subspace $\QCal=\{(\su, \sv): \ Q\su=\sv\}$ in \eqref{Simple-Update:Affine}. This requires solving the linear system $\left(I+Q\right)\su = \w$ for some $\w\in\br^{m+n+2}$, i.e.,
\begin{equation*}
\left[\begin{array}{c} \w_{\y} \\ \w_{\x} \\ \w_{\tau} \end{array}\right] = \left[\begin{array}{cc} M & \h \\ -\h^\top & 1\end{array}\right]\left[\begin{array}{c} \su_{\y} \\ \su_{\x} \\ \su_{\tau} \end{array}\right],
\end{equation*}
where
\begin{equation*}
M = \left[\begin{array}{cc} I & A \\ -A^\top & I \end{array}\right], \quad \h = \left[\begin{array}{c} -\bb \\ \bc \end{array}\right].
\end{equation*}
This implies that
\begin{equation*}
\su_\tau = \w_\tau + \h^\top\left[\begin{array}{c} \su_{\y} \\ \su_{\x} \end{array}\right], \qquad \left[\begin{array}{c} \su_{\y} \\ \su_{\x} \end{array}\right] = \left(M+\h\h^\top\right)^{-1}\left(\left[\begin{array}{c} \w_{\y} \\ \w_{\x} \end{array}\right] - \w_{\tau}\h\right),
\end{equation*}
where $M+\h\h^\top$ is the Schur complement of the lower right block $1$ in $I+Q$. Therefore, we can apply the Sherman-Morrison-Woodbury formula \cite{Press-2007-umerical} to $\left(M+\h\h^\top\right)^{-1}$ and obtain
\begin{equation*}
\left[\begin{array}{c} \su_{\y} \\ \su_{\x} \end{array}\right] = \left(M^{-1}-\frac{M^{-1}\h\h^\top M^{-1}}{1+\h^\top M^{-1}\h}\right)\left(\left[\begin{array}{c} \w_{\y} \\ \w_{\x} \end{array}\right] - \w_{\tau}\h\right),
\end{equation*}
and
\begin{equation*}
\su_\tau = \w_\tau + \bc^\top\su_{\x} - \bb^\top\su_{\y}.
\end{equation*}
Therefore, \eqref{Simple-Update:Affine} reduces to
\begin{equation}\label{Practical-Update:Affine-YX}
\left[\begin{array}{c} \tilde{\y}_{i+1}^k \\ \tilde{\x}_{i+1}^k \end{array}\right] = \left(M^{-1}-\frac{M^{-1}\h\h^\top M^{-1}}{1+\h^\top M^{-1}\h}\right)\left(\left[\begin{array}{c} \y_i^k + \sr_i^k \\ \x_i^k + \s_i^k \end{array}\right] - \left(\tau_i^k+\kappa_i^k\right)\h\right),
\end{equation}
and
\begin{equation}\label{Practical-Update:Affine-tau}
\tilde{\tau}_{i+1}^k= \tau_i^k+\kappa_i^k + \bc^\top\tilde{\x}_{i+1}^k - \bb^\top\tilde{\y}_{i+1}^k.
\end{equation}
\begin{remark}
In view of practical implementation, we can calculate and cache $M^{-1}\h$ before the first iteration. In the subsequent iterations, the main computational cost lies in the calculation of
\begin{equation*}
\left[\begin{array}{c} \z_{\y} \\ \z_{\x} \end{array}\right] = M^{-1}\left[\begin{array}{c} \y_i^k + \sr_i^k \\ \x_i^k + \s_i^k \end{array}\right],
\end{equation*}
while the other simple vector operations using cached quantities are cheap.
\end{remark}
To solve the linear system of the form
\begin{equation}\label{Factorization:System}
\left[\begin{array}{cc} I & -A \\ -A^\top & -I \end{array}\right] \left[\begin{array}{c} \z_{\y} \\ -\z_{\x} \end{array}\right]= \left[\begin{array}{c} \y_i^k + \sr_i^k \\ \x_i^k + \s_i^k \end{array}\right],
\end{equation}
we follow~\cite{Donoghue-2016-conic} and propose two approaches. The first approach is to computes a sparse permuted \textsf{LDL$^\top$} factorization \cite{Davis-2006-direct} of the matrix in \eqref{Factorization:System} before the first iteration and use this cached factorization to solve the system in the subsequent steps. When the factorization cost dominates, this technique is very effective since only one factorization is necessary and all iterations after that can be carried out quickly. The second approach is to approximately solve the system, involves first reformulate~\eqref{Factorization:System} and then solve it with the conjugate gradient method (CG)~\cite{Saad-2003-Iterative, Watkins-2004-Fundamentals}. Each iteration of conjugate gradient requires multiplying once by $A$ and once by $A^\top$, each of which
can be parallelized. If $A$ is very sparse, then these multiplications can be performed especially quickly; when $A$ is dense, it may be better to first form $G = I + AA^\top$ in the setup phase. We terminate the CG iterations using the standard same criterion; see~\cite{Donoghue-2016-conic} for the details.


\subsection{Presolving and Postsolving}\label{Section4:Preprocess}

Now we discuss issues in analyzing large and sparse LPs prior to solving them with \textsf{ABIP}. Firstly, we remove several computational expensive sub-procedures, e.g., forcing, dominated and duplicate rows and columns procedures, as used in \cite{Andersen-1995-presolving, Gondzio-1997-presolve, Meszaros-2003-advanced, Gould-2004-preprocessing}. Secondly, we use \textsf{Dulmage-Mendelsohn decomposition} \cite{Pothen-1990-computing} to remove all the dependent rows in $A$ and reformulate the original problem in the form of problem~\eqref{prob:LP}.

We consider LPs formulated in the following form:
\begin{equation*}
\ba{lll}
& \min & \bc^\top\x \\
\quad & \st  & A\x = \bb, \\
\quad & & \bl \leq \x \leq \w,
\ea
\end{equation*}
where $A$ has some linearly dependent rows. Before solving them with \textsf{ABIP}, we run a simple presolve procedure. More specifically, we detect and remove empty rows, singleton rows, fixed variables and empty columns, together with removing all the linearly dependent rows, i.e.,
\begin{enumerate}
\item[(P1)] \textsf{An empty row: $\exists i: \ a_{ij}=0, \ \forall j$.} Either the $i$-th constraint is redundant or infeasible.
\item[(P2)] \textsf{An empty column: $\exists j: \ a_{ij}=0, \ \forall i$.} $x_j$ is fixed at one of its bounds or the problem is unbounded.
\item[(P3)] \textsf{An infeasible variable: $\exists j: \ l_j > w_j$.} The problem is trivially infeasible.
\item[(P4)] \textsf{A fixed variable: $\exists j: \ l_j=w_j$.} $x_j$ can be substituted out of the problem.
\item[(P5)] \textsf{A free variable: $\exists j: \ l_j=-\infty, \ w_j=\infty$.} $x_j$ can be substituted by two nonnegative variables $x_j^+$ and $x_j^-$.
\item[(P6)] \textsf{A singleton row: $\exists\left(i,k\right): \ a_{ij}=0, \ \forall j\neq k, \ a_{jk}\neq 0$.} The $i$-th constraint fixes variable $x_j=\frac{b_i}{a_{ik}}$.
\item[(P7)] \textsf{Dulmage-Mendelsohn decomposition.} All the linearly dependent rows are detected and removed.
\end{enumerate}
Then we have a reduced LP problem as follows,
\begin{equation*}
\ba{lll}
& \min & \tilde{\bc}^\top\tilde{\x} \\
\quad & \st  & \tilde{A}\tilde{\x} = \tilde{\bb}, \\
\quad & & \tilde{\bl} \leq \tilde{\x} \leq \tilde{\w},
\ea
\end{equation*}
where $\tilde{\x}, \tilde{\bl}, \tilde{\w}\in\br^{\tilde{n}}$ and $\tilde{A}\in\br^{\tilde{m}+\tilde{n}}$ has full row rank. The last step is to reformulate the above problem as in the form of problem~\eqref{prob:LP}. After the presolving, it is guaranteed that $\tilde{\bl}>-\infty$. Now we can define $U=\{j: \tilde{\w}_j<+\infty\}$ and $\bar{\x}=\tilde{\x}-\tilde{\bl}$, and obtain the desired LP problem,
\begin{equation*}
\ba{lll}
& \min & \tilde{\bc}^\top\bar{\x} \\
\quad & \st  & \tilde{A}\bar{\x} = \tilde{\bb} - \tilde{A}\tilde{\bl}, \\
\quad & & \tilde{\x}_U + \tilde{\s} = \tilde{\w}_U-\tilde{\bl}_U, \\
\quad & & \tilde{\x} \geq 0, \ \tilde{\s}\geq 0.
\ea
\end{equation*}
After the presolve procedure, the reduced problem is ready to be solved by \textsf{ABIP}. A postsolve procedure is used to convert the solution to the reduced problem back to the solution to the original problem. 

\begin{remark}
Since our algorithm is a purely first-order algorithm, we also conduct the scale procedure after the presolve procedure to make the problems more well-conditioned. We refer to \cite{Donoghue-2016-conic} for more details.
\end{remark}

\section{Numerical Experiments}\label{Section5:Experiment}
In this section, we report experimental results of \textsf{ABIP} on solving randomly generated LPs, 6 instances from UCI collection\footnote{https://archive.ics.uci.edu/ml/datasets.html} and 114 instances from NETLIB collection\footnote{http://users.clas.ufl.edu/hager/coap/format.html}. To make the comparison fair, we compare the direct and indirect \textsf{ABIP} with different group of state-of-the-art solvers. More specifically, we compare the direct \textsf{ABIP} with \textsf{SDPT3} \cite{Tutuncu-2003-solving}, \textsf{MOSEK} \cite{Andersen-2009-homogeneous} and the direct \textsf{SCS} \cite{Donoghue-2016-conic} and the indirect \textsf{ABIP} with \textsf{DSDP-CG} \cite{Benson2000} and the indirect \textsf{SCS} \cite{Donoghue-2016-conic}.

Our \textsf{ABIP} code is written in C with a MATLAB interface. It has multi-threaded and single-threaded versions. The direct \textsf{ABIP} computes the (approximate) projections onto the subspace using a direct method based on a single-threaded sparse permuted LDL$^\top$ decomposition from the SuiteSparse package \cite{Davis-2006-direct, Davis-2005-algorithm, Amestoy-2004-algorithm}, and the indirect \textsf{ABIP} using a preconditioned conjugate gradient method (denoted by \textsf{ABIP}-CG). {For a fair comparison, we compare \textsf{ABIP} with \textsf{SCS (v2.0.0)} where the direct solver is single-threaded.} 

{In the experimental results reported below, we use the termination criteria for \textsf{ABIP} in Sections~\ref{Section3:Termination} with default values
{\begin{equation}\label{target-accuracy}
\epsilon=\epsilon_{\pres} = \epsilon_{\dres} = \epsilon_{\dgap} = \epsilon_{\pinfeas} = \epsilon_{\dinfeas} = 10^{-3} \textnormal{ or } 10^{-5},
\end{equation}}
and that for Barzilai-Borwein spectral method in Section~\ref{Section3:Selection} with default values
\begin{equation*}
\epsilon_{\textsf{cor}} = 0.2, \quad \epsilon_{\textsf{penalty}} = 0.1.
\end{equation*}}

{
For the scaling update in~\eqref{update-next-iterate}, a better heuristic update for $\x$, $\s$, $\tau$ and $\kappa$ is available in practice and we used it in our software package\footnote{{We sincerely thank an anonymous reviewer for suggesting this scaling strategy, which significantly improved the efficiency of the algorithm.}}. Indeed, we used
\begin{equation*}
((\x_0^{k+1})_j, (\s_0^{k+1})_j) = \left\{\begin{array}{rl}
((\x_{i+1}^k)_j, \gamma \cdot (\s_{i+1}^k)_j), & \textnormal{if }  (\x_{i+1}^k)_j \geq (\s_{i+1}^k)_j, \\
(\gamma \cdot (\x_{i+1}^k)_j, (\s_{i+1}^k)_j), & \textnormal{if } (\s_{i+1}^k)_j \geq (\x_{i+1}^k)_j.
\end{array}\right. \quad j = 1, 2, \ldots, n,
\end{equation*}
and
\begin{equation*}
((\tau_0^{k+1}), (\kappa_0^{k+1})) = \left\{\begin{array}{rl}
(\tau_{i+1}^k, \gamma \cdot \kappa_{i+1}^k), & \textnormal{if } \tau_{i+1}^k \geq \kappa_{i+1}^k, \\
(\gamma \cdot \tau_{i+1}^k, \kappa_{i+1}^k), & \textnormal{if } \kappa_{i+1}^k \geq \tau_{i+1}^k.
\end{array}\right.
\end{equation*}
}

The over-relaxation parameter is set to $\alpha=1.8$. Moreover, we set the maximum number of ADMM steps of \textsf{ABIP} to $10^6$. If the target accuracy in \eqref{target-accuracy} is not achieved in $10^6$ ADMM steps, we terminate the code and claim that \textsf{ABIP} fails to solve this instance and use ``---'' in the table to indicate the failure.

The decreasing ratio $\gamma$ is adjusted according to the value of the barrier parameter $\mu$, primal/dual feasibility violations and the duality gap. More specifically, we increase $\gamma$ as the iterate approaches the optimal solution. The objective value reported for all methods in the experiments below is the one after postsolving. The time required to do any preprocessing, i.e., presolving, postsolving, do/undo scaling and matrix factorization are included in the total solve times. All the experiments were carried out on a laptop with Linux system and 8 2.60GHz cores and 16Gb of RAM.

For other four solvers, we use the following stopping criteria. For \textsf{SCS}, we change the default $\alpha$ from $1.5$ to $1.8$, {use the tolerance in the stopping tolerance as $10^{-3}$ or $10^{-5}$}, and set the maximum number ADMM steps to $10^6$. These changes are made to ensure a more fair comparison, because we found that the default parameter setting needs much longer time to converge. Moreover, we use \textsf{SCS-CG} to denote \textsf{SCS} with a preconditiond conjugate gradient method computing the projections onto the subspaces. For \textsf{SDPT3} and \textsf{DSDP-CG}, the maximum number of interior point steps are both set as the default value 100. For \textsf{MOSEK}, we use the default settings. For all these solvers, we claim that they fail to solve an instance (denoted by ``---'' in the tables) if after the codes terminate, the target accuracy in \eqref{target-accuracy} is not achieved.

\subsection{Random LP Instances}
In this subsection we test the five solvers on randomly generated dense LPs.
First, we generate a Gaussian random vector $\x\in\br^n$ and split its entries randomly into three groups. The first group consists $60\%$ of entries and their values are set to zero. The second group consists of $10\%$ entries and their values are zoom in for ten times larger. The rest of the entries are in the third group and their values are zoomed out for ten times smaller. We then generate vector $\s\in\br^n$ such that $\x_i\s_i=0$ for all $i$, and nonzero entries of $\s$ follow standard normal distribution.
This ensures the complementary slackness and zero duality gap. We generate the data matrix $A = U^\top DV \in\br^{m \times n}$ where $D=\diag(\textnormal{randn}(m, 1))$, $U = \textnormal{sprandn}(m, m, 0.2)$ and $V = \textnormal{sprandn}(m, n, 0.2)$. We generate the dual solution $\y\in\br^m$ with entries following standard normal distribution. Finally, we set $\bb=A\x$ and $\bc=A^\top\y+\s$, which ensures primal and dual feasibility. The solution to the problem is not necessarily unique, but the optimal value is given by $\bc^\top\x=\bb^\top\y$.

\textit{Results:} We report the comparison results of the four direct solvers in Table \ref{Table:random-LP-direct} and the three indirect solvers in Table \ref{Table:random-LP-indirect}. {The results in Table \ref{Table:random-LP-direct} show that \textsf{ABIP} compares favorably to \textsf{SCS} and \textsf{SDPT3}, though it is inferior to the commercial solver \textsf{MOSEK}. For results in Table \ref{Table:random-LP-indirect}, our \textsf{ABIP-CG} compares favorably to \textsf{SCS-CG} and \textsf{DSDP-CG}.}

\subsection{Sparse Inverse Covariance Estimation}
In this subsection, we compare the five solvers on solving the following problem which arises from machine learning applications: \begin{eqnarray}\label{Prob:SICE}
& \min\limits_{\Omega\in\br^{d\times d}} & \left\|\Omega\right\|_{1} \\
& \st & \left\| \Sigma\Omega - I_d\right\|_{\infty} \leq \lambda,  \nonumber
\end{eqnarray}
where $\Sigma\in\br^{d\times d}$ denotes a sample covariance matrix, and $\lambda>0$ denotes some noisy tolerance. This problem, known as sparse inverse covariance estimation (SICE), is widely studied in high-dimensional statistics and machine learning \cite{Cai2011}. For given $\Sigma$, SICE \eqref{Prob:SICE} aims to find a perturbed inverse convariance matrix which is also sparse. 
Note that SICE \eqref{Prob:SICE} is separable for columns of $\Omega = \left(\beta_1, \beta_2, \cdots, \beta_d\right)$, and thus can be decomposed to $d$ problems as follows:
\begin{eqnarray}\label{Prob:SICE-Subproblem}
& \min\limits_{\beta_j\in\br^d} & \left\|\beta_j\right\|_{1} \\
& \st & \left\| \Sigma\beta_j - e_j\right\|_{\infty} \leq \lambda. \nonumber
\end{eqnarray}
Problem~\eqref{Prob:SICE-Subproblem} can be written as a standard LP as follows: 
\begin{eqnarray}\label{Prob:SICE-Subproblem-LP}
\begin{array}{ll}
\min & \e^\top\beta^{+} + \e^\top\beta^{-} \\
\st & {\Sigma\beta^{+} - \Sigma\beta^{-}} + w^{+} = \lambda\e + e_j , \\
& w^{+} + w^{-} = 2\lambda\e, \\
& \beta^{+}, \beta^{-}, w^{+}, w^{-} \geq 0,
\end{array}
\end{eqnarray}
where the number of variables is $n=4d$ and the number of constraints is $m=2d$. In our experiment, we set $\lambda=\frac{3}{2}\sqrt{\frac{\log(d)}{N}}$ where $N$ is the number of sampled data in the original data.

\textit{Problem instances:} We obtained $\Sigma$ from the UCI Machine Learning Repository\footnote{https://archive.ics.uci.edu/ml/datasets.html}. The statistics of the 6 selected instances is summarized in Table~\ref{Table:LP-UCI}.

\textit{Results:} Detailed numerical results are reported in Tables~\ref{Table:LP-UCI-Direct} and~\ref{Table:LP-UCI-Indirect}. From Table~\ref{Table:LP-UCI-Direct}, we see that \textsf{MOSEK} is the best among all the direct solvers possibly because of its preprocessing procedure of detecting dependent columns. \textsf{ABIP} and \textsf{SCS} are comparable and the speedup over \textsf{SDPT3} is more significant as the problem size increases. \textsf{ABIP} is more robust than \textsf{SCS} and \textsf{SDPT3} as \textsf{SCS} fails on ucihapt and \textsf{SDPT3} fail on gisette. From Table~\ref{Table:LP-UCI-Indirect}, \textsf{ABIP-CG} is more robust than \textsf{SCS-CG} and \textsf{DSDP-CG} as \textsf{SCS-CG} fails on ucihapt and ucihar and \textsf{DSDP-CG} fail on gisette.

{ 
\subsection{NETLIB LP Collections}

In this subsection, we report the performance of all five solvers on 114 feasible instances from NETLIB collection\footnote{http://www.netlib.org/lp/}. 

\textit{Problem instances:} NETLIB is a collection of LPs from real applications. It has been recognized as the standard testing data set for LP. The problem statistics of the 114 feasible instances before and after presolving is summarized in Tables \ref{Table:LP-NETLIB-Presolve-1} and \ref{Table:LP-NETLIB-Presolve-2}.



\textit{Results:} A summary of the numerical resutls on NETLIB is given in Tables~\ref{Table:LP-NETLIB-Main-Direct-1},~\ref{Table:LP-NETLIB-Main-Direct-2} and~\ref{Table:LP-NETLIB-Main-Indirect}.
Detailed results for different tolerance are given in Tables \ref{Table:LP-NETLIB-Direct-1}-\ref{Table:LP-NETLIB-Indirect-2}.
We observe that neither \textsf{SCS} nor \textsf{SCS-CG} is as robust as other solvers. {To be more specific, \textsf{SCS} only successfully solved 89 and 63 problems when the target accuracy is set to $10^{-3}$ and $10^{-5}$ respectively, while \textsf{SCS-CG} only successfully solved 86 problems. \textsf{MOSEK} is the most robust one while \textsf{ABIP}, \textsf{ABIP-CG}, \textsf{SDPT3} and \textsf{DSDP-CG} are comparable, and they all significantly outperform \textsf{SCS} and \textsf{SCS-CG}. This phenomenon can be explained by the superior robustness of the interior-point methods over the pure first-order methods. Furthermore, \textsf{ABIP} and \textsf{ABIP-CG} are also more efficient than \textsf{SCS} and \textsf{SCS-CG} on the collection of problem instances that can be solved by all the solvers. Such promising performance of \textsf{ABIP} and \textsf{ABIP-CG} strongly supports the usage of the first-order interior-point method on very large LP problems.}
}

{
\begin{remark}
From the numerical results we have the following observations. (i) The second-order IPM is still the best option either when the dataset is of small or medium size, or when solution with high accuracy is desired. (ii) The first-order IPM (\textsf{ABIP}) is better than the vanilla first-order method (ADMM) when the dataset is extremely large. (iii) We only focused on LP in this paper; how to design general first-order IPM for other convex optimization problems remains a future research topic.
\end{remark}
}

\section{Conclusions}\label{Section6:Conclusion}
In this paper we present a novel implementation of the primal-dual interior point method to solve linear programs via self-dual embedding. In our approach, we use the ADMM to track the central path. Therefore, the new approach is an implementation of first-order interior point method (IPM). As such, it inherits intrinsic properties of the IPM. We present a theoretical analysis showing that the overall complexity of ADMM steps is $O\left(\frac{1}{\varepsilon}\log\left(\frac{1}{\varepsilon}\right)\right)$, and the extensive numerical experiments demonstrate that the new algorithm is stable in performance and scalable in size. For the future research, we are interested in improving the numerical stability by incorporating advanced iterative methods, and the numerical efficiency by incorporating distributed computing environment.

\bigskip

\noindent
{\bf Acknowledgements.}
We would like to thank two anonymous referees for their insightful and constructive suggestions that greatly improved the presentation of this paper. We would like to thank Zaiwen Wen for fruitful discussions regarding this project and many helps on the codes. The research of Shiqian Ma was supported in part by NSF grants DMS-1953210 and CCF-2007797, and UC Davis CeDAR (Center for Data Science and Artificial Intelligence Research) Innovative Data Science Seed Funding Program.


\begin{thebibliography}{10}

\bibitem{Amestoy-2004-algorithm}
P.~R. Amestoy, T.~A. Davis, and I.~S. Duff.
\newblock Algorithm 837: {AMD}, an approximate minimum degree ordering algorithm.
\newblock {\em ACM Transactions on Mathematical Software (TOMS)}, 30(3):381--388, 2004.

\bibitem{Andersen-2009-homogeneous}
E.~D. Andersen.
\newblock The homogeneous and self-dual model and algorithm for linear optimization.
\newblock Technical report, Technical report TR-1-2009, 2009. MOSEK ApS., 2009.

\bibitem{Andersen-1995-presolving}
E.~D. Andersen and K.~D. Andersen.
\newblock Presolving in linear programming.
\newblock {\em Mathematical Programming}, 71(2):221--245, 1995.

\bibitem{Barzilai-1988-two}
J.~Barzilai and J.~M. Borwein.
\newblock Two-point step size gradient methods.
\newblock {\em IMA Journal of Numerical Analysis}, 8(1):141--148, 1988.

\bibitem{Bayer-1989-nonlinear}
D.~A. Bayer and J.~C. Lagarias.
\newblock The nonlinear geometry of linear programming. {I}. {A}ffine and projective scaling trajectories.
\newblock {\em Transactions of the American Mathematical Society}, 314(2):499--526, 1989.

\bibitem{Benson2000}
S.~J. Benson, Y.~Ye, and X.~Zhang.
\newblock Solving large-scale sparse semidefinite programs for combinatorial optimization.
\newblock {\em SIAM Journal on Optimization}, 10(2):443--461, 2000.

\bibitem{Boyd-2011-distributed}
S.~Boyd, N.~Parikh, E.~Chu, B.~Peleato, and J.~Eckstein.
\newblock Distributed optimization and statistical learning via the alternating direction method of multipliers.
\newblock {\em Foundations and Trends{\textregistered} in Machine Learning}, 3(1):1--122, 2011.

\bibitem{Cai2011}
T.~Cai, W.~Liu, and X.~Luo.
\newblock A constrained $\ell_1$ minimization approach to sparse precision matrix estimation.
\newblock {\em Journal of the American Statistical Association}, 106(494):594--607, 2011.

\bibitem{Combettes2007b}
P.~L. Combettes and Jean-Christophe Pesquet.
\newblock A {D}ouglas-{R}achford splitting approach to nonsmooth convex variational signal recovery.
\newblock {\em IEEE Journal of Selected Topics in Signal Processing}, 1(4):564--574, 2007.

\bibitem{Cui-2019-Implementation}
Y.~Cui, K.~Morikuni, T.~Tsuchiya and K.~Hayami.
\newblock Implementation of interior-point methods for LP based on Krylov subspace iterative solvers with inner-iteration preconditioning.
\newblock {\em Computational Optimization and Applications}, 74(1):143-176, 2019.

\bibitem{Dantzig-1963}
G.~B. Dantzig.
\newblock {\em Linear Programming and Extensions}.
\newblock Princeton University Press, 1963.

\bibitem{Davis-2005-algorithm}
T.~A. Davis.
\newblock Algorithm 849: A concise sparse {C}holesky factorization package.
\newblock {\em ACM Transactions on Mathematical Software (TOMS)},
  31(4):587--591, 2005.

\bibitem{Davis-2006-direct}
T.~A. Davis.
\newblock {\em Direct methods for sparse linear systems}.
\newblock SIAM Fundamentals of Algorithms. SIAM, Philadephia (2006), 2006.

\bibitem{Eckstein-1994-parallel}
J.~Eckstein.
\newblock Parallel alternating direction multiplier decomposition of convex programs.
\newblock {\em Journal of Optimization Theory and Applications}, 80(1):39--62, 1994.

\bibitem{Eckstein2012}
J.~Eckstein.
\newblock Augmented {L}agrangian and alternating direction methods for convex optimization: A tutorial and some illustrative computational results.
\newblock {\em Preprint}, 2012.

\bibitem{Eckstein-1992-douglas}
J.~Eckstein and D.~P. Bertsekas.
\newblock On the douglas-rachford splitting method and the proximal point algorithm for maximal monotone operators.
\newblock {\em Mathematical Programming}, 55(1):293--318, 1992.

\bibitem{Fortin-Glowinski-1983}
M.~Fortin and R.~Glowinski.
\newblock {\em Augmented Lagrangian methods: applications to the numerical solution of boundary-value problems}.
\newblock North-Holland Pub. Co., 1983.

\bibitem{Fountoulakis2014}
K.~Fountoulakis, J.~Gondzio, and P.~Zhlobich.
\newblock Matrix-free interior point method for compressed sensing problems.
\newblock {\em Mathematical Programming Computation}, 6:1--31, 2014.

\bibitem{Gabay-83}
D.~Gabay.
\newblock Applications of the method of multipliers to variational inequalities.
\newblock In M.~Fortin and R.~Glowinski, editors, {\em Augmented Lagrangian Methods: Applications to the Solution of Boundary Value Problems}. North-Hollan, Amsterdam, 1983.

\bibitem{Gill-1986-Projected}
P.~E. Gill, W.~Murry, M.~A. Saunders, J.~A. Tomlin and M.~H. Wright.
\newblock On projected Newton barrier methods for linear programming and an equivalence to Karmarkar's projective method.
\newblock {\em Mathematical Programming}, 36(2):183--209, 1986.

\bibitem{Glowinski1989}
R.~Glowinski and P.~Le~Tallec.
\newblock {\em Augmented Lagrangian and Operator-Splitting Methods in Nonlinear Mechanics}.
\newblock SIAM, Philadelphia, Pennsylvania, 1989.

\bibitem{Glowinski1975b}
R.~Glowinski and A.~Marrocco.
\newblock Sur l'approximation par {\`e}l{\`e}ments finis et la r{\`e}solution par p{\`e}nalisation-dualit{\`e} d'une classe de probl{\`e}mes de dirichlet non lin{\`e}aires.
\newblock {\em RAIRO}, R-2:41--76, 1975.

\bibitem{Goldstein2009a}
T.~Goldstein and S.~Osher.
\newblock The split {Bregman} method for {L1}-regularized problems.
\newblock {\em SIAM Journal on Imaging Science}, 2:323--343, 2009.

\bibitem{Gondzio-1997-presolve}
J.~Gondzio.
\newblock Presolve analysis of linear programs prior to applying an interior point method.
\newblock {\em INFORMS Journal on Computing}, 9(1):73--91, 1997.

\bibitem{Gondzio-2012-interior}
J.~Gondzio.
\newblock Interior point methods 25 years later.
\newblock {\em European Journal of Operational Research}, 218(3):587--601, 2012.

\bibitem{Gondzio2012}
J.~Gondzio.
\newblock Matrix-free interior point method.
\newblock {\em Computational Optimization and Applications}, 51:457--480, 2012.

\bibitem{Gould-2004-preprocessing}
N.~Gould and P.~L. Toint.
\newblock Preprocessing for quadratic programming.
\newblock {\em Mathematical Programming}, 100(1):95--132, 2004.

\bibitem{He-2000-alternating}
B.~S. He, H.~Yang, and S.~L. Wang.
\newblock Alternating direction method with self-adaptive penalty parameters for monotone variational inequalities.
\newblock {\em Journal of Optimization Theory and applications}, 106(2):337--356, 2000.

\bibitem{Iri-1986-multiplicative}
M.~Iri and H.~Imai.
\newblock A multiplicative barrier function method for linear programming.
\newblock {\em Algorithmica}, 1(1-4):455--482, 1986.

\bibitem{Karmarkar-1984}
N.~K. Karmarkar.
\newblock A new polynomial-time algorithm for linear programming.
\newblock {\em Combinatorica}, 4:373--395, 1984.

\bibitem{Lions1979}
P.~L. Lions and B.~Mercier.
\newblock Splitting algorithms for the sum of two nonlinear operators.
\newblock {\em SIAM Journal on Numerical Analysis}, 16:964--979, 1979.

\bibitem{Megiddo-1989-progress}
N.~Megiddo.
\newblock {\em Progress in Mathematical Programming}.
\newblock Springer, 1989.

\bibitem{Mehrotra-1992-implementation}
S.~Mehrotra.
\newblock On the implementation of a primal-dual interior point method.
\newblock {\em SIAM Journal on Optimization}, 2(4):575--601, 1992.

\bibitem{Meszaros-2003-advanced}
C.~M{\'e}sz{\'a}ros and U.~H. Suhl.
\newblock Advanced preprocessing techniques for linear and quadratic programming.
\newblock {\em OR Spectrum}, 25(4):575--595, 2003.

\bibitem{NN-1994}
Y.~Nesterov and A.~Nemirovskii.
\newblock {\em Interior-Point Polynomial Algorithms in Convex Programming}.
\newblock SIAM Studies in Applied Mathematics, 1994.

\bibitem{Donoghue-2016-conic}
B.~O'Donoghue, E.~Chu, N.~Parikh, and S.~Boyd.
\newblock Conic optimization via operator splitting and homogeneous self-dual embedding.
\newblock {\em Journal of Optimization Theory and Applications}, 169(3):1042--1068, 2016.

\bibitem{Donoghue-2013-splitting}
B.~O'Donoghue, G.~Stathopoulos, and S.~Boyd.
\newblock A splitting method for optimal control.
\newblock {\em IEEE Transactions on Control Systems Technology}, 21(6):2432--2442, 2013.

\bibitem{Pothen-1990-computing}
A.~Pothen and C.-J. Fan.
\newblock Computing the block triangular form of a sparse matrix.
\newblock {\em ACM Transactions on Mathematical Software (TOMS)}, 16(4):303--324, 1990.

\bibitem{Press-2007-umerical}
W.~H. Press.
\newblock {\em Numerical recipes 3rd edition: The art of scientific computing}.
\newblock Cambridge University Press, 2007.

\bibitem{Renegar-1988-polynomial}
J.~Renegar.
\newblock A polynomial-time algorithm, based on {N}ewton's method, for linear programming.
\newblock {\em Mathematical Programming}, 40(1):59--93, 1988.

\bibitem{Saad-2003-Iterative}
Y.~Saad.
\newblock {\em Iterative methods for sparse linear systems}, volume~82.
\newblock SIAM, 2003.

\bibitem{Sonnevend-1986-analytical}
G.~Sonnevend.
\newblock An ``analytical centre'' for polyhedrons and new classes of global algorithms for linear (smooth, convex) programming.
\newblock {\em System Modeling and Optimization}, Pages 866--875, 1986.

\bibitem{Sturm-Zhang-1997}
J.~F. Sturm and S.~Zhang.
\newblock On a wide region of centers and primal-dual interior point algorithms for linear programming.
\newblock {\em Mathematics of Operations Research}, 22:408--431, 1997.

\bibitem{Tutuncu-2003-solving}
R.~H. T{\"u}t{\"u}nc{\"u}, K.-C. Toh, and M.~J. Todd.
\newblock Solving semidefinite-quadratic-linear programs using {SDPT3}.
\newblock {\em Mathematical Programming}, 95(2):189--217, 2003.

\bibitem{Wang-2017-New}
S.~Wang and N.~Shroff.
\newblock A new alternating direction method for linear programming.
\newblock In {\em NIPS}, Pages 1479--1487, 2017.

\bibitem{Watkins-2004-Fundamentals}
D.~S. Watkins.
\newblock {\em Fundamentals of Matrix Computations}, Volume~64.
\newblock John Wiley \& Sons, 2004.

\bibitem{Wen-2010-alternating}
Z.~Wen, D.~Goldfarb, and W.~Yin.
\newblock Alternating direction augmented {L}agrangian methods for semidefinite programming.
\newblock {\em Mathematical Programming Computation}, 2(3):203--230, 2010.

\bibitem{Xu-1996-simplified}
X.~Xu, P-F. Hung, and Y.~Ye.
\newblock A simplified homogeneous and self-dual linear programming algorithm and its implementation.
\newblock {\em Annals of Operations Research}, 62(1):151--171, 1996.

\bibitem{Xu-2017-adaptive}
Z.~Xu, M.~Figueiredo, and T.~Goldstein.
\newblock Adaptive {ADMM} with spectral penalty parameter selection.
\newblock In {\em AISTATS}, pages 718--727, 2017.

\bibitem{Yang2011}
J.~Yang and Y.~Zhang.
\newblock Alternating direction algorithms for $l_1$-problems in compressive sensing.
\newblock {\em SIAM Journal on Scientific Computing}, 33(1):250--278, 2011.

\bibitem{Yang2010}
J.~Yang, Y.~Zhang, and W.~Yin.
\newblock A fast alternating direction method for tvl1-l2 signal reconstruction from partial fourier data.
\newblock {\em IEEE Journal of Selected Topics in Signal Processing Special Issue on Compressed Sensing}, 4(2):288--297, 2010.

\bibitem{Yang-2015-sdpnal+}
L.~Yang, D.~Sun, and K-C. Toh.
\newblock {SDPNAL}+: a majorized semismooth {N}ewton-{CG} augmented {L}agrangian method for semidefinite programming with nonnegative constraints.
\newblock {\em Mathematical Programming Computation}, 7(3):331--366, 2015.

\bibitem{Ye-IPM-1997}
Y.~Ye.
\newblock {\em Interior Point Algorithms: Theory and Analysis}.
\newblock Wiley \& Sons, New York, 1997.

\bibitem{Ye-1994-hsd}
Y.~Ye, M.~J. Todd, and S.~Mizuno.
\newblock An {$\mathcal{O}(\sqrt{n}L)$}-iteration homogeneous and self-dual linear programming algorithm.
\newblock {\em Mathematics of Operations Research}, 19(1):53--67, 1994.

\bibitem{Ye-1987-recovering}
Y.~Ye and M.~Kojima.
\newblock Recovering optimal dual solutions in Karmarkar's polynomial time algorithm for linear programming.
\newblock {\em Mathematical Programming}, 39(3):305--317, 1987.

\bibitem{Yen-2015-sparse}
I.~E-H. Yen, K.~Zhong, C-J. Hsieh, P.~K. Ravikumar, and I.~S. Dhillon.
\newblock Sparse linear programming via primal and dual augmented coordinate descent.
\newblock In {\em NIPS}, Pages 2368--2376, 2015.

\bibitem{Zhou-2006-gradient}
B.~Zhou, L.~Gao, and Y-H. Dai.
\newblock Gradient methods with adaptive step-sizes.
\newblock {\em Computational Optimization and Applications}, 35(1):69--86, 2006.

\end{thebibliography}

\newpage

\begin{table}[t]\small
\centering
\begin{tabular}{|r|r|r|r|r|r|r|} \hline
$(m, n)$ & solver & \textsf{obj} & \textsf{pres} & \textsf{dres} & \textsf{dgap} & \textsf{time} \\ \hline
\multirow{4}{*}{$(500, 5000)$}
& ABIP  & 1.01e+04   & 1.10e-04  & 6.82e-04  & 9.56e-04  & 1.06e+01 \\
& SCS   & 1.01e+04   & 5.98e-04  & 4.28e-04  & 3.06e-04  & 1.42e+01 \\
& SDPT3 & 1.01e+04   & 1.22e-08  & 2.72e-05  & 2.20e-05  & 4.74e+01 \\
& MOSEK & 1.01e+04   & 1.70e-12  & 4.51e-14  & 1.15e-12  & 1.22e+00 \\ \hline
\multirow{4}{*}{$(500, 10000)$}
& ABIP  & -2.65e+04  & 6.88e-05  & 3.84e-04  & 9.60e-04  & 2.57e+01 \\
& SCS   & -2.65e+04  & 8.71e-04  & 4.37e-04  & 3.47e-04  & 2.21e+02 \\
& SDPT3 & -2.65e+04  & 1.83e-08  & 4.81e-05  & 3.99e-05  & 8.22e+01 \\
& MOSEK & -2.65e+04  & 1.03e-09  & 1.31e-09  & 2.07e-12  & 2.36e+00 \\ \hline
\multirow{4}{*}{$(500, 20000)$}
& ABIP  & 3.41e+04   & 7.99e-05  & 2.28e-04  & 9.75e-04  & 6.16e+01 \\
& SCS   & 3.41e+04   & 7.91e-04  & 3.24e-04  & 3.10e-04  & 1.19e+02 \\
& SDPT3 & 3.41e+04   & 1.15e-08  & 9.55e-05  & 5.62e-05  & 1.73e+02 \\
& MOSEK & 3.41e+04   & 4.55e-12  & 2.76e-14  & 6.15e-12  & 5.97e+00 \\ \hline
\multirow{4}{*}{$(1000, 5000)$}
& ABIP  & -1.35e+04  & 7.12e-05  & 4.28e-04  & 8.83e-04  & 3.99e+01 \\
& SCS   & -1.35e+04  & 9.57e-04  & 9.11e-04  & 6.36e-04  & 1.51e+02 \\
& SDPT3 & -1.35e+04  & 4.02e-09  & 7.22e-06  & 6.31e-06  & 2.62e+02 \\
& MOSEK & -1.35e+04  & 6.00e-10  & 4.97e-11  & 8.72e-10  & 3.90e+00 \\ \hline
\multirow{4}{*}{$(1000, 10000)$}
& ABIP  & 1.57e+05   & 9.27e-05  & 6.73e-04  & 9.73e-04  & 5.60e+01 \\
& SCS   & 1.57e+05   & 9.97e-04  & 7.49e-04  & 2.15e-05  & 2.49e+02 \\
& SDPT3 & 1.57e+05   & 6.34e-08  & 3.80e-05  & 3.15e-05  & 3.33e+02 \\
& MOSEK & 1.57e+05   & 5.01e-10  & 2.05e-13  & 1.03e-10  & 7.04e+00 \\ \hline
\multirow{4}{*}{$(1000, 20000)$}
& ABIP  & 1.47e+05   & 5.96e-05  & 4.56e-04  & 9.97e-04  & 1.22e+02 \\
& SCS   & 1.46e+05   & 9.68e-04  & 5.10e-04  & 3.36e-05  & 5.01e+02 \\
& SDPT3 & 1.46e+05   & 5.91e-08  & 1.10e-04  & 6.97e-05  & 8.34e+02 \\
& MOSEK & 1.46e+05   & 2.54e-11  & 1.23e-13  & 6.39e-12  & 1.40e+01 \\ \hline
\multirow{4}{*}{$(2000, 10000)$}
& ABIP  & -5.32e+04  & 3.76e-05  & 2.64e-04  & 9.85e-04  & 8.89e+02 \\
& SCS   & -5.32e+04  & 9.02e-04  & 8.35e-04  & 1.62e-04  & 9.94e+02 \\
& SDPT3 & -5.34e+04  & 1.03e-08  & 5.81e-06  & 5.69e-06  & 2.29e+03 \\
& MOSEK & -5.34e+04  & 5.52e-11  & 5.74e-11  & 7.22e-12  & 2.14e+01 \\ \hline
\multirow{4}{*}{$(2000, 20000)$}
& ABIP  & -7.40e+04  & 3.36e-05  & 2.15e-04  & 9.92e-04  & 4.04e+03 \\
& SCS   & -7.40e+04  & 9.18e-04  & 7.06e-04  & 5.26e-04  & 5.47e+03 \\
& SDPT3 & -7.42e+04  & 4.93e-08  & 1.79e-04  & 2.22e-04  & 3.37e+03 \\
& MOSEK & -7.42e+04  & 1.97e-10  & 7.70e-12  & 1.14e-10  & 3.62e+01 \\ \hline
\multirow{4}{*}{$(4000, 20000)$}
& ABIP  & -3.50e+05  & 2.88e-05  & 2.54e-04  & 1.00e-03  & 5.67e+03 \\
& SCS   & -3.50e+05  & 9.55e-04  & 8.84e-04  & 4.09e-04  & 6.72e+03 \\
& SDPT3 & -3.50e+05  & 1.09e-08  & 9.06e-06  & 9.75e-06  & 1.64e+04 \\
& MOSEK & -3.50e+05  & 5.04e-10  & 2.92e-10  & 1.95e-11  & 9.06e+01 \\ \hline
\end{tabular}\caption{Performance of four direct solvers on randomly generated LPs. CPU times are in seconds. }\label{Table:random-LP-direct}
\end{table}

\begin{table}[t]\small
\centering
\begin{tabular}{|r|r|r|r|r|r|r|} \hline
$(m, n)$ & solver & \textsf{obj} & \textsf{pres} & \textsf{dres} & \textsf{dgap} & \textsf{time} \\ \hline
\multirow{3}{*}{$(500, 5000)$}
& ABIP-CG & -9.32e+02 & 1.75e-05 & 2.01e-04 & 9.66e-04 & 1.34e+03 \\
& SCS-CG & -9.33e+02 & 6.00e-04 & 5.13e-04 & 9.22e-04 & 3.45e+03 \\
& DSDP-CG & -9.32e+02 & 1.03e-10 & --- & 2.45e-04 & 6.53e+01 \\ \hline
\multirow{3}{*}{$(500, 10000)$}
& ABIP-CG & 1.08e+04  & 4.70e-05 & 3.21e-04 & 9.95e-04 & 8.73e+02 \\
& SCS-CG & 1.07e+04  & 8.38e-04 & 4.95e-04 & 3.85e-04 & 2.43e+03 \\
& DSDP-CG & 1.07e+04  & 9.88e-11 & --- & 1.25e-04 & 7.60e+01 \\ \hline
\multirow{3}{*}{$(500, 20000)$}
& ABIP-CG & 1.93e+04  & 2.82e-05 & 1.22e-04 & 9.94e-04 & 2.44e+03 \\
& SCS-CG & 1.93e+04  & 8.66e-04 & 3.62e-04 & 9.99e-04 & 3.05e+03 \\
& DSDP-CG & 1.93e+04  & 2.13e-11 & --- & 5.32e-05 & 1.52e+02 \\ \hline
\multirow{3}{*}{$(1000, 5000)$}
& ABIP-CG & 7.73e+04  & 1.23e-04 & 7.69e-04 & 9.99e-04 & 8.53e+02 \\
& SCS-CG & 7.71e+04  & 9.00e-04 & 8.12e-04 & 1.65e-04 & 4.59e+03 \\
& DSDP-CG & 7.71e+04  & 1.24e-09 & --- & 4.45e-04 & 3.28e+02 \\ \hline
\multirow{3}{*}{$(1000, 10000)$}
& ABIP-CG & -5.41e+04 & 6.38e-05 & 2.83e-04 & 9.93e-04 & 2.80e+03 \\
& SCS-CG & -5.44e+04 & 9.97e-04 & 8.01e-04 & 4.20e-05 & 8.82e+03 \\
& DSDP-CG & -5.43e+04 & 2.51e-11 & --- & 3.68e-05 & 4.72e+02 \\ \hline
\multirow{3}{*}{$(1000, 20000)$}
& ABIP-CG & -1.70e+05 & 7.15e-05 & 3.32e-04 & 9.77e-04 & 4.53e+03 \\
& SCS-CG & -1.71e+05 & 8.78e-04 & 7.34e-04 & 1.10e-04 & 3.34e+04 \\
& DSDP-CG & -1.71e+05 & 6.39e-12 & --- & 2.17e-04 & 5.86e+02 \\ \hline
\multirow{3}{*}{$(2000, 10000)$}
& ABIP-CG & 1.16e+05  & 4.91e-05 & 3.70e-04 & 9.86e-04 & 9.47e+03 \\
& SCS-CG & 1.15e+05  & 9.38e-04 & 8.16e-04 & 8.01e-05 & 3.19e+04 \\
& DSDP-CG & 1.15e+05  & 1.19e-09 & --- & 4.43e-04 & 3.01e+03 \\ \hline
\end{tabular}\caption{Performance of three indirect solvers on randomly generated LPs. CPU times are in seconds. Note: DSDP-CG does not provide the dual residuals.}\label{Table:random-LP-indirect}
\end{table}

\begin{table}[t]
\centering
\begin{tabular}{|r|r|r|r|r|r|r|r|}\hline
\multirow{2}{*}{\textbf{Name}} & \multicolumn{7}{|c|}{\textbf{Problem Statistics}} \\ \cline{2-8}
& Samples ($N$) & Variables ($d$) & Threshold ($\lambda$) & Rows ($m$)& Cols ($n$) & Nonzeros & Sparsity \\ \hline
gisette & 13500 & 5000 & 0.0377 & 10000 & 20000 & 50015000 & 0.2501 \\
isolet & 7797  & 618  & 0.0431 & 1236  &  2472 & 765702   & 0.2506 \\
sEMG & 1800  & 3000 & 0.1000 & 6000  & 12000 & 18009000 & 0.2501 \\
sEMGday & 3600  & 2500 & 0.0699 & 5000  & 10000 & 12507500 & 0.2501 \\
ucihapt & 10929 & 561  & 0.0361 & 1122  &  2244 & 631125   & 0.2507 \\
ucihar & 10299 & 561  & 0.0372 & 1122  &  2244 & 631125   & 0.2507 \\ \hline
\end{tabular}\caption{Statistics of six instances in \textsf{UCI} collection.} \label{Table:LP-UCI}
\end{table}

\begin{table}[!t]
\centering
\begin{tabular}{|r||r||r|r|r|r||r|}\hline
\textbf{Name} & \textbf{Method} & \textsf{obj} & \textsf{pres} & \textsf{dres} & \textsf{dgap}  & \textsf{time} \\ \hline
\multirow{4}{*}{gisette}
&  ABIP & 1.25e-05 & 1.00e-03 & 1.16e-04 & 6.47e-06 & 1.03e+03 \\
&  SCS  & 1.25e-05 & 8.89e-04 & 9.54e-04 & 4.13e-08 & 2.31e+03 \\
&  SDPT3 & ---      & ---      & ---      & ---      & ---      \\
&  MOSEK & 1.25e-05 & 7.11e-13 & 4.63e-15 & 2.89e-13 & 7.46e+01 \\ \hline
\multirow{4}{*}{isolet}
&  ABIP  & 6.37e+02 & 1.56e-04 & 9.99e-04 & 2.77e-08 & 1.89e+02 \\
&  SCS   & 6.37e+02 & 7.34e-04 & 8.85e-04 & 1.28e-05 & 8.68e+02 \\
&  SDPT3 & 6.37e+02 & 1.67e-08 & 1.45e-06 & 4.62e-08 & 2.89e+01 \\
&  MOSEK & 6.37e+02 & 2.89e-08 & 1.92e-10 & 2.43e-10 & 6.95e-01 \\ \hline
\multirow{4}{*}{sEMG}
&  ABIP  & 1.06e+02 & 9.68e-04 & 6.21e-04 & 3.19e-04 & 2.61e+02 \\
&  SCS   & 1.07e+02 & 9.48e-04 & 8.61e-04 & 5.27e-05 & 2.85e+02 \\
&  SDPT3 & 1.07e+02 & 2.35e-08 & 5.38e-06 & 4.06e-06 & 3.52e+03 \\
&  MOSEK & 1.07e+02 & 3.05e-09 & 5.76e-15 & 9.95e-10 & 2.99e+01 \\ \hline
\multirow{4}{*}{sEMGday}
&  ABIP  & 1.89e+02 & 4.04e-04 & 9.93e-04 & 4.55e-05 & 1.67e+02 \\
&  SCS   & 1.89e+02 & 4.18e-04 & 9.99e-04 & 1.70e-05 & 2.43e+02 \\
&  SDPT3 & 1.89e+02 & 1.23e-08 & 2.43e-06 & 1.21e-06 & 2.04e+03 \\
&  MOSEK & 1.89e+02 & 1.85e-09 & 1.10e-13 & 1.03e-09 & 1.76e+01 \\ \hline
\multirow{4}{*}{ucihapt}
&  ABIP  & 3.64e+03 & 2.23e-05 & 1.00e-03 & 4.65e-07 & 8.44e+02 \\
&  SCS   & ---      & ---      & ---      & ---      & ---      \\
&  SDPT3 & 3.63e+03 & 7.73e-07 & 3.79e-07 & 4.25e-08 & 4.20e+01 \\
&  MOSEK & 3.63e+03 & 1.06e-10 & 1.56e-11 & 3.17e-14 & 7.46e-01 \\ \hline
\multirow{4}{*}{ucihar}
&  ABIP  & 2.05e+03 & 1.51e-04 & 1.00e-03 & 6.30e-07 & 1.42e+02 \\
&  SCS   & 2.05e+03 & 3.55e-04 & 9.99e-04 & 1.88e-06 & 1.00e+03 \\
&  SDPT3 & 2.05e+03 & 3.99e-08 & 1.67e-09 & 1.08e-13 & 4.65e+01 \\
&  MOSEK & 2.05e+03 & 2.16e-08 & 6.57e-10 & 5.77e-11 & 5.87e-01 \\ \hline
\end{tabular} \caption{Performance of four direct solvers on UCI. CPU times are in seconds.} \label{Table:LP-UCI-Direct}
\end{table}

\begin{table}[!t]
\centering
\begin{tabular}{|r||r||r|r|r|r||r|}\hline
\textbf{Name} & \textbf{Method} & \textsf{obj} & \textsf{pres} & \textsf{dres} & \textsf{dgap}  & \textsf{time} \\ \hline
\multirow{3}{*}{gisette}
&  ABIP-CG & 1.23e-05 & 9.87e-04 & 1.21e-04 & 4.48e-06 & 1.42e+04 \\
&  SCS-CG  & 1.23e-05 & 9.93e-04 & 6.35e-04 & 1.85e-07 & 6.11e+04 \\
&  DSDP-CG & ---      & ---      & ---      & ---      & ---      \\ \hline
\multirow{3}{*}{isolet}
&  ABIP-CG & 6.37e+02 & 1.55e-04 & 9.98e-04 & 3.78e-07 & 2.18e+03 \\
&  SCS-CG  & 6.37e+02 & 9.36e-04 & 9.52e-04 & 1.30e-06 & 1.20e+04 \\
&  DSDP-CG & 6.37e+02 & 3.70e-10 & ---      & 1.65e-04 & 4.24e+01 \\ \hline
\multirow{3}{*}{sEMG}
&  ABIP-CG & 1.06e+02 & 9.95e-04 & 6.66e-04 & 2.41e-04 & 1.80e+03 \\
&  SCS-CG  & 1.07e+02 & 9.56e-04 & 7.92e-04 & 1.07e-04 & 2.81e+03 \\
&  DSDP-CG & 1.07e+02 & 2.38e-12 & ---      & 3.18e-04 & 6.15e+03 \\ \hline
\multirow{3}{*}{sEMGday}
&  ABIP-CG & 1.89e+02 & 4.07e-04 & 9.98e-04 & 3.76e-05 & 1.13e+03 \\
&  SCS-CG  & 1.89e+02 & 4.72e-04 & 9.62e-04 & 3.49e-05 & 2.10e+03 \\
&  DSDP-CG & 1.89e+02 & 6.63e-11 & ---      & 2.76e-04 & 3.32e+03 \\ \hline
\multirow{3}{*}{ucihapt}
&  ABIP-CG & 3.64e+03 & 2.36e-05 & 1.00e-03 & 4.73e-07 & 1.12e+04 \\
&  SCS-CG  & ---      & ---      & ---      & ---      & ---      \\
&  DSDP-CG & 3.63e+03 & 1.58e-07 & ---      & 1.82e-04 & 3.57e+01 \\ \hline
\multirow{3}{*}{ucihar}
&  ABIP-CG & 2.05e+03 & 1.46e-04 & 1.00e-03 & 5.68e-07 & 2.24e+03 \\
&  SCS-CG  & ---      & ---      & ---      & ---      & ---      \\
&  DSDP-CG & 2.05e+03 & 4.42e-08 & ---      & 3.95e-04 & 3.27e+01 \\ \hline
\end{tabular} \caption{Performance of three indirect solvers on UCI. CPU times are in seconds.} \label{Table:LP-UCI-Indirect}
\end{table}

\begin{table}[t] \footnotesize
\centering
\begin{tabular}{|r|r|r|r|r||r||r|r|r|r|} \hline
\multirow{2}{*}{Problem} & \multicolumn{4}{|c||}{\textsf{Before Presolving}} & & \multicolumn{4}{|c|}{\textsf{After Presolving}} \\ \cline{2-5} \cline{7-10}
& \textsf{Rows} & \textsf{Cols} & \textsf{Nonzeros} & \textsf{Sparsity} & & \textsf{Rows} & \textsf{Cols} & \textsf{Nonzeros} & \textsf{Sparsity}\\ \hline
25FV47      & 821    & 1876   & 10705  & 0.00695     & & 798    & 1854   & 10580   & 0.00715 \\
80BAU3B     & 2262   & 12061  & 23264   & 0.00085    & & 5221   & 14502  & 28620   & 0.00038       \\
ADLITTLE    & 56     & 138    & 424     & 0.05487    & & 55     & 137    & 417     & 0.05534 \\
AFIRO       & 27     & 51     & 102     & 0.07407    & & 27     & 51     & 102     & 0.07407 \\
AGG         & 488    & 615    & 2862    & 0.00954    & & 488    & 615    & 2862    & 0.00954 \\
AGG2        & 516    & 758    & 4740    & 0.01212    & & 516    & 758    & 4740    & 0.01212 \\
AGG3        & 516    & 758    & 4756    & 0.01216    & & 516    & 758    & 4756    & 0.01216 \\
BANDM       & 305    & 472    & 2494    & 0.01732    & & 269    & 436    & 2137    & 0.01822 \\
BEACONFD    & 173    & 295    & 3408    & 0.06678    & & 148    & 270    & 3105    & 0.07770 \\
BLEND       & 74     & 114    & 522     & 0.06188    & & 74     & 114    & 522     & 0.06188 \\
BNL1        & 643    & 1586   & 5532    & 0.00542    & & 632    & 1576   & 5522    & 0.00554 \\
BNL2        & 2324   & 4486   & 14996   & 0.00144    & & 2268   & 4430   & 14914   & 0.00148 \\
BOEING1     & 351    & 726    & 3827    & 0.01502    & & 592    & 967    & 4309    & 0.00753 \\
BOEING2     & 166    & 305    & 1358    & 0.02682    & & 213    & 352    & 1478    & 0.01971 \\
BORE3D      & 233    & 334    & 1448    & 0.01861    & & 210    & 311    &  1346    & 0.02061 \\
BRANDY      & 220    & 303    & 2202    & 0.03303    & & 149    & 259    &  2015    & 0.05221 \\
CAPRI       & 271    & 482    & 1896    & 0.01452    & & 397    & 605    & 2137    & 0.00890 \\
CRE{\_}A    & 3516   & 7248   & 18168   & 0.00071    & & 3428   & 7248   & 18168   & 0.00073 \\
CRE{\_}B    & 9648   & 77137  & 260785  & 0.00035    & & 7240   & 77137  & 260785  & 0.00047 \\
CRE{\_}C    & 3068   & 6411   & 15977   & 0.00081    & & 2986   & 6411   & 15977   & 0.00083 \\
CRE{\_}D    & 8926   & 73948  & 246614  & 0.00037    & & 6476   & 73948  & 246614  & 0.00051 \\
CYCLE       & 1903   & 3371   & 21234   & 0.00331    & & 1878   & 3381   & 20958   & 0.00330 \\
CZPROB      & 929    & 3562   & 10708   & 0.00324    & & 737    & 3141   &  9454    & 0.00408 \\
D2Q06C      & 2171   & 5831   & 33081   & 0.00261    & & 2171   & 5831   & 33081   & 0.00261 \\
D6CUBE      & 415    & 6184   & 37704   & 0.01469    & & 404    & 6184   & 37704   & 0.01509 \\
DEGEN2      & 444    & 757    & 4201    & 0.01250    & & 444    & 757    &  4201    & 0.01250 \\
DEGEN3      & 1503   & 2604   & 25432   & 0.00650    & & 1503   & 2604   & 25432   & 0.00650 \\
DFL001      & 6071   & 12230  & 35632   & 0.00048    & & 6084   & 12243  & 35658   & 0.00048 \\
E226        & 223    & 472    & 2768    & 0.02630    & & 220    & 469    &  2737    & 0.02653 \\
ETAMACRO    & 400    & 816    & 2537    & 0.00777    & & 488    & 823    &  2306    & 0.00574 \\
FFFFF800    & 524    & 1028   & 6401    & 0.01188    & & 501    & 1005   &  6283    & 0.01248 \\
FINNIS      & 497    & 1064   & 2760    & 0.00522    & & 528    & 1050   &  2599    & 0.00469 \\
FIT1D       & 24     & 1049   & 13427   & 0.53333    & & 1050   & 2075   & 15479   & 0.00710 \\
FIT1P       & 627    & 1677   & 9868    & 0.00938    & & 1026   & 2076   & 10666   & 0.00501 \\
FIT2D       & 25     & 10524  & 129042  & 0.49047    & & 10525  & 21024  & 150042  & 0.00068 \\
FIT2P       & 3000   & 13525  & 50284   & 0.00124    & & 10500  & 21025  & 65284   & 0.00030 \\
FORPLAN     & 161    & 492    & 4634    & 0.05850    & & 157    & 485    &  4583    & 0.06019 \\
GANGES      & 1309   & 1706   & 6937    & 0.00311    & & 1534   & 1931   &  7387    & 0.00249 \\
GFRD{\_}PNC & 616    & 1160   & 2445    & 0.00342    & & 858    & 1402   &  2929    & 0.00243 \\
GREENBEA    & 2392   & 5598   & 31070   & 0.00232    & & 2608   & 5714   & 31014   & 0.00208 \\
GREENBEB    & 2392   & 5598   & 31070   & 0.00232    & & 2608   & 5706   & 30966   & 0.00208 \\
GROW7       & 140    & 301    & 2612    & 0.06198    & & 420    & 581    &  3172    & 0.01300 \\
GROW15      & 300    & 645    & 5620    & 0.02904    & & 900    & 1245   &  6820    & 0.00609 \\
GROW22      & 440    & 946    & 8252    & 0.01983    & & 1320   & 1826   & 10012   & 0.00415 \\
ISRAEL      & 174    & 316    & 2443    & 0.04443    & & 174    & 316    &  2443    & 0.04443 \\
KB2         & 43     & 68     & 313     & 0.10705    & & 52     & 77     & 331     & 0.08267 \\
KEN{\_}7    & 2426   & 3602   & 8404    & 0.00096    & & 4558   & 5734   & 12374   & 0.00047 \\
KEN{\_}11   & 14694  & 21349  & 49058   & 0.00016    & & 29751  & 36406  & 78567   & 0.00007 \\
KEN{\_}13   & 28632  & 42659  & 97246   & 0.00008    & & 60813  & 74840  & 159749  & 0.00004 \\
KEN{\_}18   & 105127 & 154699 & 358171  & 0.00002    & & 231314 & 280886 & 606657  & 0.00001 \\
LOTFI       & 153    & 366    & 1136    & 0.02029    & & 151    & 364    & 1123    & 0.02043 \\
MAROS       & 846    & 1966   & 10137   & 0.00609    & & 835    & 1921   & 10060   & 0.00627 \\
MAROS{\_}R7 &  3136  & 9408   & 144848  & 0.00491    & & 3136   & 9408   & 144848  & 0.00491 \\
MODSZK1     & 687    & 1620   & 3168    & 0.00285    & & 686    & 1622   &  3170    & 0.00285 \\
NESM        & 662    & 3105   & 13470   & 0.00655    & & 2250   & 4518   & 16436   & 0.00162 \\
OSA{\_}07   & 1118   & 25067  & 144812  & 0.00517    & & 1118   & 25067  & 144812  & 0.00517 \\
OSA{\_}14   & 2337   & 54797  & 317097  & 0.00248    & & 2337   & 54797  & 317097  & 0.00248 \\ \hline
\end{tabular}\caption{Statistics of feasible LPs in \textsf{NETLIB} before and after presolving: part 1.}\label{Table:LP-NETLIB-Presolve-1}
\end{table}

\begin{table}[t] \footnotesize
\centering
\begin{tabular}{|r|r|r|r|r||r||r|r|r|r|} \hline
\multirow{2}{*}{Problem} & \multicolumn{4}{|c||}{\textsf{Before Presolving}} & &  \multicolumn{4}{|c|}{\textsf{After Presolving}} \\ \cline{2-5} \cline{7-10}
& \textsf{Rows} & \textsf{Cols} & \textsf{Nonzeros} & \textsf{Sparsity} & & \textsf{Rows} & \textsf{Cols} & \textsf{Nonzeros} & \textsf{Sparsity}\\ \hline
OSA{\_}30   & 4350  & 104374 & 604488  & 0.00133    & & 4350  & 104374 & 604488  & 0.00133 \\
OSA{\_}60   & 10280 & 243246 & 1408073 & 0.00056    & & 10280 & 243246 & 1408073 & 0.00056 \\
PDS{\_}02   & 2953  & 7716   & 16571   & 0.00073    & & 4768  & 9531   & 20190   & 0.00044 \\
PDS{\_}06   & 9881  & 29351  & 63220   & 0.00022    & & 18604 & 38074  & 80556   & 0.00011 \\
PDS{\_}10   & 16558 & 49932  & 107605  & 0.00013    & & 32079 & 65453  & 138482  & 0.00007 \\
PDS{\_}20   & 33874 & 108175 & 232647  & 0.00006    & & 67599 & 141976 & 299853  & 0.00003 \\
PEROLD      & 625   & 1506   & 6148    & 0.00653    & & 891   & 1796   & 7663    & 0.00479 \\
PILOT       & 1441  & 4860   & 44375   & 0.00634    & & 2481  & 5697   & 44380   & 0.00314 \\
PILOT4      & 410   & 1123   & 5264    & 0.01143    & & 649   & 1420   & 7720    & 0.00838 \\
PILOT87     & 2030  & 6680   & 74949   & 0.00553    & & 3608  & 8038   & 75635   & 0.00261 \\
PILOT{\_}JA & 940   & 2267   & 14977   & 0.00703    & & 1263  & 2383   & 14017   & 0.00466 \\
PILOT{\_}WE & 722   & 2928   & 9265    & 0.00438    & & 1016  & 3224   & 10125   & 0.00309 \\
PILOTNOV    & 975   & 2446   & 13331   & 0.00559    & & 1291  & 2582   & 13140   & 0.00394 \\
QAP8        & 912   & 1632   & 7296    & 0.00490    & & 912   & 1632   & 7296    & 0.00490 \\
QAP12       & 3192  & 8856   & 38304   & 0.00136    & & 3192  & 8856   & 38304   & 0.00136 \\
QAP15       & 6330  & 22275  & 94950   & 0.00067    & & 6330  & 22275  & 94950   & 0.00067 \\
RECIPE      & 91    & 204    & 687     & 0.03701    & & 153   & 245    & 785     & 0.02094 \\
SC50A       & 50    & 78     & 160     & 0.04103    & & 49    & 77     & 159     & 0.04214 \\
SC50B       & 50    & 78     & 148     & 0.03795    & & 48    & 76     & 146     & 0.04002 \\
SC105       & 105   & 163    & 340     & 0.01987    & & 105   & 163    & 340     & 0.01987 \\
SC205       & 205   & 317    & 665     & 0.01023    & & 205   & 317    & 665     & 0.01023 \\
SCAGR7      & 129   & 185    & 465     & 0.01948    & & 129   & 185    & 465     & 0.01948 \\
SCAGR25     & 471   & 671    & 1725    & 0.00546    & & 471   & 671    & 1725    & 0.00546 \\
SCFXM1      & 330   & 600    & 2732    & 0.01380    & & 322   & 592    & 2707    & 0.01420 \\
SCFXM2      & 660   & 1200   & 5469    & 0.00691    & & 644   & 1184   & 5419    & 0.00711 \\
SCFXM3      & 990   & 1800   & 8206    & 0.00460    & & 966   & 1776   & 8131    & 0.00474 \\
SCORPION    & 388   & 466    & 1534    & 0.00848    & & 375   & 453   & 1460    & 0.00859 \\
SCRS8       & 490   & 1275   & 3288    & 0.00526    & & 485   & 1270  & 3262    & 0.00530 \\
SCSD1       & 77    & 760    & 2388    & 0.04081    & & 77    & 760   & 2388    & 0.04081 \\
SCSD6       & 147   & 1350   & 4316    & 0.02175    & & 147   & 1350  & 4316    & 0.02175 \\
SCSD8       & 397   & 2750   & 8584    & 0.00786    & & 397   & 2750  & 8584    & 0.00786 \\
SCTAP1      & 300   & 660    & 1872    & 0.00945    & & 300   & 660   & 1872    & 0.00945 \\
SCTAP2      & 1090  & 2500   & 7334    & 0.00269    & & 1090  & 2500  & 7334    & 0.00269 \\
SCTAP3      & 1480  & 3340   & 9734    & 0.00197    & & 1480  & 3340  & 9734    & 0.00197 \\
SEBA        & 515   & 1036   & 4360    & 0.00817    & & 1029  & 1550  & 5388    & 0.00338 \\
SHARE1B     & 117   & 253    & 1179    & 0.03983    & & 112   & 248   & 1148    & 0.04133 \\
SHARE2B     & 96    & 162    & 777     & 0.04996    & & 96    & 162   &  777     & 0.04996 \\
SHELL       & 536   & 1777   & 3558    & 0.00374    & & 613   & 1604  & 3212    & 0.00327 \\
SHIP04L     & 402   & 2166   & 6380    & 0.00733    & & 356   & 2162  & 6368    & 0.00827 \\
SHIP04S     & 402   & 1506   & 4400    & 0.00727    & & 268   & 1414  & 4124    & 0.01088 \\
SHIP08L     & 778   & 4363   & 12882   & 0.00380    & & 688   & 4339  & 12810   & 0.00429 \\
SHIP08S     & 778   & 2467   & 7194    & 0.00375    & & 416   & 2171  & 6306    & 0.00698 \\
SHIP12L     & 1151  & 5533   & 16276   & 0.00256    & & 838   & 5329  & 15664   & 0.00351 \\
SHIP12S     & 1151  & 2869   & 8284    & 0.00251    & & 466   & 2293  & 6556    & 0.00614 \\
SIERRA      & 1227  & 2735   & 8001    & 0.00238    & & 3238  & 4731  & 11983   & 0.00078 \\
STAIR       & 356   & 614    & 4003    & 0.01831    & & 362   & 544   & 3843    & 0.01951 \\
STANDATA    & 359   & 1274   & 3230    & 0.00706    & & 463   & 1362  & 3381    & 0.00536 \\
STANDGUB    & 361   & 1383   & 3338    & 0.00669    & & 464   & 1470  & 3489    & 0.00512 \\
STANDMPS    & 467   & 1274   & 3878    & 0.00652    & & 571   & 1362  & 4029    & 0.00518 \\
STOCFOR1    & 117   & 165    & 501     & 0.02595    & & 109   & 157   &  471     & 0.02752 \\
STOCFOR2    & 2157  & 3045   & 9357    & 0.00142    & & 2157  & 3045  & 9357    & 0.00142 \\
STOCFOR3    & 16675 & 23541  & 72721   & 0.00019    & & 16675 & 23541 & 72721   & 0.00019 \\
TRUSS       & 1000  & 8806   & 27836   & 0.00316    & & 1000  & 8806  & 27836   & 0.00316 \\
TUFF        & 333   & 628    & 4561    & 0.02181    & & 317   & 642   & 4599    & 0.02260 \\
VTP{\_}BASE & 198   & 346    & 1051    & 0.01534    & & 258   & 389   & 1065    & 0.01061 \\
WOOD1P      & 244   & 2595   & 70216   & 0.11089    & & 244   & 2595  & 70216   & 0.11089 \\
WOODW       & 1098  & 8418   & 37487   & 0.00406    & & 1098  & 8418  & 37487   & 0.00406 \\ \hline
\end{tabular}\caption{Statistics of feasible LPs in \textsf{NETLIB} before and after presolving: part 2.} \label{Table:LP-NETLIB-Presolve-2}
\end{table}

\begin{table}[!ht]
\centering
\begin{tabular}{|r|r|r|r|r|}\hline
\multirow{2}{*}{\textbf{Solver}} & \multicolumn{2}{|c|}{\textbf{Num of Instances}} & \multicolumn{2}{|c|}{\textbf{Time(s)}} \\ \cline{2-5}
& \textsf{Solved} & \textsf{Unsolved} &  \textsf{Mean} & \textsf{Std} \\ \hline
ABIP  & 101  & 13  & 12.15  & 46.10  \\
SCS   & 89   & 25  & 36.39  & 236.26 \\
SDPT3 & 104  & 10  & 27.73  & 151.95 \\
MOSEK & 114  & 0   & 0.27   & 1.19   \\ \hline
\end{tabular}\caption{Summary of the performance of all four direct solvers ($\epsilon = 10^{-3}$) on \textsf{NETLIB}. The CPU times (in seconds) are summarized for 78 instances that can be solved by all four direct solvers.} \label{Table:LP-NETLIB-Main-Direct-1}
\end{table}

\begin{table}[!ht]
\centering
\begin{tabular}{|r|r|r|r|r|}\hline
\multirow{2}{*}{\textbf{Solver}} & \multicolumn{2}{|c|}{\textbf{Num of Instances}} & \multicolumn{2}{|c|}{\textbf{Time(s)}} \\ \cline{2-5}
& \textsf{Solved} & \textsf{Unsolved} &  \textsf{Mean} & \textsf{Std} \\ \hline
ABIP  & 85   & 29  & 31.96   & 94.22   \\
SCS   & 63   & 51  & 256.41  & 1300.60 \\
SDPT3 & 81   & 33  & 19.77   & 82.53   \\
MOSEK & 110  & 4   & 0.34    & 1.56    \\ \hline
\end{tabular}\caption{Summary of the performance of all four direct solvers ($\epsilon = 10^{-5}$) on \textsf{NETLIB}. The CPU times (in seconds) are summarized for 44 instances that can be solved by all four direct solvers.} \label{Table:LP-NETLIB-Main-Direct-2}
\end{table}

\begin{table}[!ht]
\centering
\begin{tabular}{|r|r|r|r|r|}\hline
\multirow{2}{*}{\textbf{Solver}} & \multicolumn{2}{|c|}{\textbf{Num of Instances}} & \multicolumn{2}{|c|}{\textbf{Time(s)}} \\ \cline{2-5}
& \textsf{Solved} & \textsf{Unsolved} &  \textsf{Mean} & \textsf{Std} \\ \hline
ABIP-CG & 101  & 13  & 66.61   & 313.25 \\
SCS-CG  & 86   & 28  & 79.99   & 316.45 \\
DSDP-CG & 105  & 9   & 158.47  & 895.59 \\ \hline
\end{tabular}\caption{Summary of the performance of all three indirect solvers ($\epsilon = 10^{-3}$) on \textsf{NETLIB}. The CPU times (in seconds) are summarized for 74 instances that can be solved by all five solvers.} \label{Table:LP-NETLIB-Main-Indirect}
\end{table}

\begin{table}[t] \scriptsize
\centering
\begin{tabular}{|r|r|r|r|r|r|}\hline
\textbf{Problem} & \textsf{ABIP} & \textsf{SCS} & \textsf{SDPT3} & \textsf{MOSEK} \\\hline
25FV47      & 5.51e+00  & 1.89e+01   & 6.92e-01   & 5.54e-02 \\ \hline
80BAU3B     & 4.12e+00  & 4.46e+00   & 6.64e+00   & 1.67e-01 \\ \hline
ADLITTLE    & 1.16e-02  & 3.33e-02   & 9.03e-02   & 5.50e-03 \\ \hline
AFIRO       & 1.58e-03  & 1.07e-03   & 6.38e-02   & 1.72e-03 \\ \hline
AGG         & 4.62e-01  & ---        & 2.95e-01   & 7.40e-03 \\ \hline
AGG2        & 7.35e-01  & 2.24e+01   & 2.56e-01   & 1.90e-02 \\ \hline
AGG3        & 8.83e-01  & 1.54e+01   & 2.55e-01   & 1.39e-02 \\ \hline
BANDM       & 2.89e-01  & 2.61e+00   & 1.87e-01   & 1.13e-02 \\ \hline
BEACONFD    & 8.48e-01  & 2.37e+00   & 9.60e-02   & 6.97e-03 \\ \hline
BLEND       & 3.64e-02  & 3.38e-02   & 6.53e-02   & 4.58e-03 \\ \hline
BNL1        & 5.12e-01  & 5.64e+00   & 3.21e-01   & 2.82e-02 \\ \hline
BNL2        & 9.70e-01  & 5.70e+00   & 2.51e+00   & 1.28e-01 \\ \hline
BOEING1     & 1.53e-01  & 1.35e+01   & 3.28e-01   & 1.60e-02 \\ \hline
BOEING2     & 2.15e-01  & 9.21e-01   & 1.92e-01   & 7.99e-03 \\ \hline
BORE3D      & 3.43e+00  & 3.61e+01   & 1.30e-01   & 4.38e-03 \\ \hline
BRANDY      & 6.05e-01  & 2.97e+00   & 1.95e-01   & 9.30e-03 \\ \hline
CAPRI       & 6.45e-01  & ---        & 4.33e-01   & 9.50e-03 \\ \hline
CRE{\_}A    & 2.86e-01  & 1.59e+00   & 1.37e+00   & 5.87e-02 \\ \hline
CRE{\_}B    & 8.58e+00  & 3.45e+01   & 7.05e+01   & 5.84e-01 \\ \hline
CRE{\_}C    & 4.33e-01  & 1.57e+00   & 1.41e+00   & 5.71e-02 \\ \hline
CRE{\_}D    & 1.18e+01  & 3.57e+01   & 7.20e+01   & 4.76e-01 \\ \hline
CYCLE       & 1.44e+02  & 1.17e+02   & 3.36e+00   & 6.06e-02 \\ \hline
CZPROB      & 9.47e-01  & 3.17e+00   & 4.27e-01   & 2.64e-02 \\ \hline
D2Q06C      & 4.38e+02  & ---        & 2.76e+00   & 2.45e-01 \\ \hline
D6CUBE      & 3.58e+00  & 2.35e+00   & 5.82e-01   & 9.54e-02 \\ \hline
DEGEN2      & 5.51e-02  & 5.36e-02   & 1.77e-01   & 2.05e-02 \\ \hline
DEGEN3      & 5.08e-01  & 5.71e-01   & 9.36e-01   & 1.17e-01 \\ \hline
DFL001      & ---       & 1.31e+01   & 1.15e+02   & 1.45e+00 \\ \hline
E226        & 1.40e+00  & 4.68e+00   & 2.68e-01   & 8.27e-03 \\ \hline
ETAMACRO    & 5.15e-01  & 3.53e+00   & 3.71e-01   & 2.95e-02 \\ \hline
FFFFF800    & 8.25e+01  & ---        & 5.04e-01   & 2.12e-02 \\ \hline
FINNIS      & 3.01e-01  & 5.67e-01   & 2.87e-01   & 1.42e-02 \\ \hline
FIT1D       & 3.89e+01  & ---        & 2.84e-01   & 1.40e-02 \\ \hline
FIT1P       & 2.41e-01  & 3.13e+00   & 9.33e-01   & 1.02e-01 \\ \hline
FIT2D       & 1.50e+03  & ---        & 2.15e+00   & 1.12e-01 \\ \hline
FIT2P       & 2.00e+00  & 8.17e+00   & 2.56e+00   & 9.22e-02 \\ \hline
FORPLAN     & ---       & 3.75e+00   & 2.60e-01   & 1.33e-02 \\ \hline
GANGES      & 1.85e-01  & 2.96e-01   & 2.81e-01   & 1.84e-02 \\ \hline
GFRD{\_}PNC & 2.65e-02  & 5.87e-02   & 3.27e-01   & 1.82e-02 \\ \hline
GREENBEA    & 3.64e+01  & 2.01e+02   & 3.52e+00   & 2.09e-01 \\ \hline
GREENBEB    & 1.83e+00  & 1.56e+01   & 3.62e+00   & 8.69e-02 \\ \hline
GROW7       & 3.80e-02  & 3.48e-02   & 1.67e-01   & 7.23e-03 \\ \hline
GROW15      & 8.57e-02  & 5.53e-02   & 1.63e-01   & 1.23e-02 \\ \hline
GROW22      & 1.37e-01  & 1.22e-01   & 2.14e-01   & 1.76e-02 \\ \hline
ISRAEL      & 2.10e+00  & ---        & 2.00e-01   & 1.65e-02 \\ \hline
KB2         & 1.18e+00  & 8.31e+00   & 7.55e-02   & 4.50e-03 \\ \hline
KEN{\_}7    & 3.44e-01  & 3.71e-01   & 3.95e-01   & 2.28e-02 \\ \hline
KEN{\_}11   & 6.04e+00  & 5.76e+00   & ---        & 1.48e-01 \\ \hline
KEN{\_}13   & 1.64e+01  & 1.73e+01   & ---        & 4.62e-01 \\ \hline
KEN{\_}18   & 1.84e+02  & 1.23e+02   & ---        & 2.26e+00 \\ \hline
LOTFI       & 4.12e+00  & ---        & 2.83e-01   & 6.61e-03 \\ \hline
MAROS       & ---       & ---        & 8.13e-01   & 2.93e-02 \\ \hline
MAROS{\_}R7 & 4.65e+00  & 1.96e+00   & 3.91e+00   & 3.47e-01 \\ \hline
MODSZK1     & ---       & 2.08e-01   & 5.06e-01   & 5.50e-02 \\ \hline
NESM        & 3.90e-01  & 7.96e-01   & 6.00e-01   & 4.95e-02 \\ \hline
OSA{\_}07   & 3.32e+00  & 8.50e-01   & 1.48e+00   & 1.12e-01 \\ \hline
OSA{\_}14   & 1.17e+01  & 2.58e+01   & 4.22e+00   & 2.38e-01 \\ \hline
\end{tabular}\caption{CPU time (in seconds) of all four direct solvers ($\epsilon = 10^{-3}$) on NETLIB: part 1.} \label{Table:LP-NETLIB-Direct-1}
\end{table}

\begin{table}[t] \scriptsize
\centering
\begin{tabular}{|r|r|r|r|r|r|}\hline
\textbf{Problem} & \textsf{ABIP} & \textsf{SCS} & \textsf{SDPT3} & \textsf{MOSEK} \\\hline
OSA{\_}30   & 2.42e+02    & 4.86e+01    & 8.06e+00   & 5.62e-01 \\ \hline
OSA{\_}60   & 1.19e+02    & 2.77e+02    & ---        & 1.47e+00 \\ \hline
PDS{\_}02   & 8.92e-01    & 3.84e-01    & 4.41e+00   & 4.52e-02 \\ \hline
PDS{\_}06   & 7.27e+00    & 2.19e+00    & 1.20e+02   & 4.24e-01 \\ \hline
PDS{\_}10   & 1.99e+01    & 7.69e+00    & 5.33e+02   & 8.41e-01 \\ \hline
PDS{\_}20   & 8.66e+01    & 4.45e+01    & 1.24e+03   & 2.93e+00 \\ \hline
PEROLD      & ---         & ---         & ---        & 6.05e-02 \\ \hline
PILOT       & 4.21e+00    & ---         & 3.47e+00   & 4.20e-01 \\ \hline
PILOT4      & 8.77e+01    & ---         & ---        & 5.32e-02 \\ \hline
PILOT87     & 2.92e+02    & 2.08e+03    & 1.31e+01   & 9.16e-01 \\ \hline
PILOT{\_}JA & ---         & ---         & ---        & 1.11e-01 \\ \hline
PILOT{\_}WE & 2.34e+00    & 5.59e+01    & ---        & 8.27e-02 \\ \hline
PILOTNOV    & ---         & ---         & 1.68e+00   & 1.03e-01 \\ \hline
QAP8        & 4.13e-01    & 1.30e-01    & 4.16e-01   & 9.38e-02 \\ \hline
QAP12       & 6.26e+00    & 5.66e+00    & 9.17e+00   & 1.47e+00 \\ \hline
QAP15       & 3.21e+01    & 2.95e+01    & 4.68e+01   & 1.01e+01 \\ \hline
RECIPE      & 1.87e-02    & 5.63e-03    & 9.02e-02   & 4.34e-03 \\ \hline
SC50A       & 3.20e-03    & 1.71e-03    & 5.89e-02   & 1.76e-03 \\ \hline
SC50B       & 8.43e-03    & 1.88e-03    & 4.88e-02   & 1.73e-03 \\ \hline
SC105       & 4.04e-03    & 6.89e-03    & 7.28e-02   & 3.85e-03 \\ \hline
SC205       & 2.47e-02    & 2.57e-02    & 1.14e-01   & 3.99e-03 \\ \hline
SCAGR7      & 1.89e-02    & 1.22e-02    & 1.23e-01   & 6.21e-03 \\ \hline
SCAGR25     & 5.70e-02    & 6.64e-02    & 2.58e-01   & 3.27e-02 \\ \hline
SCFXM1      & 1.80e+00    & ---         & 3.21e-01   & 1.15e-02 \\ \hline
SCFXM2      & 3.32e+00    & ---         & 3.84e-01   & 2.20e-02 \\ \hline
SCFXM3      & 4.32e+00    & ---         & 4.57e-01   & 3.21e-02 \\ \hline
SCORPION    & 3.88e-02    & 3.24e-02    & 1.31e-01   & 7.10e-03 \\ \hline
SCRS8       & 1.44e-01    & 7.68e-01    & 3.78e-01   & 1.41e-02 \\ \hline
SCSD1       & 2.90e-02    & 1.64e-02    & 6.34e-02   & 5.52e-03 \\ \hline
SCSD6       & 4.47e-02    & 6.39e-02    & 8.24e-02   & 1.14e-02 \\ \hline
SCSD8       & 7.86e-02    & 1.17e-01    & 1.38e-01   & 1.34e-02 \\ \hline
SCTAP1      & 1.81e-01    & 5.17e-01    & 1.64e-01   & 8.48e-03 \\ \hline
SCTAP2      & 1.02e+00    & 3.65e-01    & 2.32e-01   & 1.80e-02 \\ \hline
SCTAP3      & 9.71e-01    & 8.21e-01    & 3.46e-01   & 2.43e-02 \\ \hline
SEBA        & 2.91e+00    & ---         & 5.56e-01   & 1.71e-02 \\ \hline
SHARE1B     & 3.27e+00    & ---         & 1.45e-01   & 8.70e-03 \\ \hline
SHARE2B     & 1.67e+00    & ---         & 7.52e-02   & 5.70e-03 \\ \hline
SHELL       & 7.82e-02    & 8.08e-02    & 3.09e-01   & 1.74e-02 \\ \hline
SHIP04L     & 8.16e-02    & 8.88e-02    & 1.99e-01   & 1.37e-02 \\ \hline
SHIP04S     & 7.66e-02    & 9.15e-02    & 1.68e-01   & 1.31e-02 \\ \hline
SHIP08L     & 1.98e-01    & 2.34e-01    & 2.57e-01   & 1.94e-02 \\ \hline
SHIP08S     & 1.11e-01    & 1.27e-01    & 2.24e-01   & 1.45e-02 \\ \hline
SHIP12L     & 4.34e-01    & 1.07e+00    & 2.86e-01   & 2.47e-02 \\ \hline
SHIP12S     & 1.63e-01    & 2.86e-01    & 2.60e-01   & 1.73e-02 \\ \hline
SIERRA      & 1.25e+02    & ---         & 1.88e+00   & 2.84e-02 \\ \hline
STAIR       & 1.79e-01    & 2.31e+00    & 2.86e-01   & 1.78e-02 \\ \hline
STANDATA    & 2.55e-01    & 1.41e-01    & 2.95e-01   & 9.95e-03 \\ \hline
STANDGUB    & 3.65e-01    & 2.41e-01    & 3.36e-01   & 9.76e-03 \\ \hline
STANDMPS    & 3.88e-01    & 2.02e-01    & 3.17e-01   & 1.28e-02 \\ \hline
STOCFOR1    & ---         & 9.82e-03    & 8.98e-02   & 2.76e-03 \\ \hline
STOCFOR2    & ---         & 2.47e+00    & 3.44e-01   & 3.31e-02 \\ \hline
STOCFOR3    & ---         & ---         & 2.92e+00   & 2.32e-01 \\ \hline
TRUSS       & 2.19e+00    & 7.54e+00    & 4.43e-01   & 8.81e-02 \\ \hline
TUFF        & 6.15e-02    & ---         & ---        & 1.29e-02 \\ \hline
VTP{\_}BASE & ---         & ---         & ---        & 5.56e-03 \\ \hline
WOOD1P      & ---         & 7.79e-01    & 6.95e-01   & 4.19e-02 \\ \hline
WOODW       & ---         & ---         & 9.28e-01   & 5.35e-02 \\ \hline
\end{tabular} \caption{CPU time (in seconds) of all four direct solvers ($\epsilon = 10^{-3}$) on NETLIB: part 2.} \label{Table:LP-NETLIB-Direct-2}
\end{table}

\begin{table}[t] \scriptsize
\centering
\begin{tabular}{|r|r|r|r|r|r|}\hline
\textbf{Problem} & \textsf{ABIP} & \textsf{SCS} & \textsf{SDPT3} & \textsf{MOSEK} \\\hline
25FV47      & 2.19e+02  & ---        & 1.46e+00   & 8.56e-02 \\ \hline
80BAU3B     & 5.84e+02  & 3.19e+02   & 6.74e+00   & 1.62e-01 \\ \hline
ADLITTLE    & 5.63e-02  & 6.93e-01   & 2.62e-01   & 4.10e-03 \\ \hline
AFIRO       & 2.98e-03  & 1.09e-03   & 1.26e-01   & 1.62e-03 \\ \hline
AGG         & ---       & ---        & 3.52e-01   & 8.35e-03 \\ \hline
AGG2        & 1.01e+02  & ---        & ---        & 1.42e-02 \\ \hline
AGG3        & 1.00e+02  & ---        & ---        & ---      \\ \hline
BANDM       & 2.61e+01  & ---        & 2.21e-01   & 9.45e-03 \\ \hline
BEACONFD    & ---       & ---        & 1.14e-01   & 4.22e-03 \\ \hline
BLEND       & 2.50e-01  & 4.84e-02   & 8.76e-02   & 3.86e-03 \\ \hline
BNL1        & 4.17e+01  & ---        & 3.36e-01   & 2.93e-02 \\ \hline
BNL2        & 1.59e+02  & ---        & 2.59e+00   & 1.29e-01 \\ \hline
BOEING1     & 2.05e+01  & ---        & ---        & 1.52e-02 \\ \hline
BOEING2     & 6.64e+00  & ---        & 2.11e-01   & 9.14e-03 \\ \hline
BORE3D      & ---       & ---        & 1.60e-01   & 4.57e-03 \\ \hline
BRANDY      & 7.94e+00  & 5.33e+00   & 2.31e-01   & 1.11e-02 \\ \hline
CAPRI       & 1.00e+01  & ---        & ---        & 9.36e-03 \\ \hline
CRE{\_}A    & 7.11e+00  & 8.00e+01   & 1.39e+00   & 6.34e-02 \\ \hline
CRE{\_}B    & 1.63e+02  & 1.35e+03   & 7.04e+01   & 5.97e-01 \\ \hline
CRE{\_}C    & 1.26e+01  & 3.25e+01   & 1.37e+00   & 6.07e-02 \\ \hline
CRE{\_}D    & 1.34e+02  & 8.57e+03   & 7.03e+01   & 5.00e-01 \\ \hline
CYCLE       & ---       & ---        & 3.39e+00   & 6.12e-02 \\ \hline
CZPROB      & 1.81e+00  & 1.21e+02   & 3.81e-01   & 2.52e-02 \\ \hline
D2Q06C      & ---       & ---        & 2.59e+00   & 2.37e-01 \\ \hline
D6CUBE      & 1.63e+01  & ---        & 5.77e-01   & 1.08e-01 \\ \hline
DEGEN2      & 5.78e-01  & 1.97e-01   & 1.86e-01   & 2.53e-02 \\ \hline
DEGEN3      & 4.89e+00  & 7.24e+00   & 9.26e-01   & 1.19e-01 \\ \hline
DFL001      & ---       & 5.90e+01   & ---        & 1.46e+00 \\ \hline
E226        & 1.51e+01  & ---        & 2.85e-01   & 9.62e-03 \\ \hline
ETAMACRO    & 1.97e+01  & ---        & 3.64e-01   & 2.64e-02 \\ \hline
FFFFF800    & ---       & ---        & 4.98e-01   & 2.16e-02 \\ \hline
FINNIS      & 2.26e+01  & ---        & 2.83e-01   & 1.36e-02 \\ \hline
FIT1D       & ---       & ---        & 2.89e-01   & 1.74e-02 \\ \hline
FIT1P       & 2.88e+00  & 5.22e+00   & ---        & 1.01e-01 \\ \hline
FIT2D       & ---       & ---        & 2.11e+00   & 1.23e-01 \\ \hline
FIT2P       & 3.22e+01  & 1.30e+02   & ---        & 9.46e-02 \\ \hline
FORPLAN     & ---       & 7.17e+01   & ---        & 1.26e-02 \\ \hline
GANGES      & 3.01e+00  & 1.60e+00   & 2.61e-01   & 1.79e-02 \\ \hline
GFRD{\_}PNC & 3.62e+00  & ---        & ---        & 1.51e-02 \\ \hline
GREENBEA    & 4.11e+02  & ---        & ---        & 2.14e-01 \\ \hline
GREENBEB    & ---       & ---        & ---        & 8.82e-02 \\ \hline
GROW7       & 4.94e-01  & 6.21e-02   & 1.69e-01   & 6.94e-03 \\ \hline
GROW15      & 5.21e-01  & 1.26e-01   & 1.57e-01   & 1.28e-02 \\ \hline
GROW22      & 4.53e-01  & 2.00e-01   & 2.00e-01   & ---      \\ \hline
ISRAEL      & 1.29e+01  & ---        & 2.05e-01   & 1.35e-02 \\ \hline
KB2         & ---       & ---        & 8.14e-02   & 2.88e-03 \\ \hline
KEN{\_}7    & 2.59e+00  & 1.38e+01   & ---        & 2.24e-02 \\ \hline
KEN{\_}11   & 5.13e+01  & 3.63e+02   & ---        & 1.51e-01 \\ \hline
KEN{\_}13   & 2.90e+02  & 1.25e+03   & ---        & 4.55e-01 \\ \hline
KEN{\_}18   & 3.60e+03  & 6.35e+03   & ---        & 2.28e+00 \\ \hline
LOTFI       & 1.90e+01  & ---        & ---        & 4.08e-03 \\ \hline
MAROS       & ---       & ---        & 7.89e-01   & 2.89e-02 \\ \hline
MAROS{\_}R7 & 4.04e+01  & ---        & ---        & 3.22e-01 \\ \hline
MODSZK1     & ---       & 4.71e+01   & 4.74e-01   & ---      \\ \hline
NESM        & 5.58e+00  & 1.60e+02   & 5.75e-01   & 4.88e-02 \\ \hline
OSA{\_}07   & 5.71e+01  & 1.25e+02   & ---        & 1.11e-01 \\ \hline
OSA{\_}14   & 7.51e+01  & 2.66e+02   & ---        & 2.35e-01 \\ \hline
\end{tabular}\caption{CPU time (in seconds) of all four direct solvers ($\epsilon = 10^{-5}$) on NETLIB: part 1.} \label{Table:LP-NETLIB-Direct-3}
\end{table}

\begin{table}[t] \scriptsize
\centering
\begin{tabular}{|r|r|r|r|r|r|}\hline
\textbf{Problem} & \textsf{ABIP} & \textsf{SCS} & \textsf{SDPT3} & \textsf{MOSEK} \\\hline
OSA{\_}30   & 2.27e+02    & 6.02e+02    & ---        & 5.49e-01 \\ \hline
OSA{\_}60   & 8.68e+02    & 1.20e+04    & ---        & 1.45e+00 \\ \hline
PDS{\_}02   & 7.49e+00    & 1.30e+00    & 4.42e+00   & 4.33e-02 \\ \hline
PDS{\_}06   & 5.71e+01    & 1.85e+01    & 1.20e+02   & 4.01e-01 \\ \hline
PDS{\_}10   & 1.58e+02    & 2.17e+01    & 5.32e+02   & 8.49e-01 \\ \hline
PDS{\_}20   & 5.77e+02    & 6.24e+02    & ---        & 3.03e+00 \\ \hline
PEROLD      & ---         & ---         & ---        & 5.18e-02 \\ \hline
PILOT       & 6.51e+02    & ---         & 3.45e+00   & 4.10e-01 \\ \hline
PILOT4      & ---         & ---         & ---        & 4.67e-02 \\ \hline
PILOT87     & ---         & ---         & 1.30e+01   & 9.46e-01 \\ \hline
PILOT{\_}JA & ---         & ---         & ---        & 1.13e-01 \\ \hline
PILOT{\_}WE & 1.15e+02    & ---         & ---        & ---      \\ \hline
PILOTNOV    & ---         & ---         & 1.59e+00   & 1.08e-01 \\ \hline
QAP8        & 1.39e+00    & 1.87e-01    & ---        & 9.47e-02 \\ \hline
QAP12       & 2.64e+01    & 1.20e+02    & 8.93e+00   & 1.47e+00 \\ \hline
QAP15       & 8.33e+01    & 3.70e+02    & 4.59e+01   & 1.03e+01 \\ \hline
RECIPE      & 1.07e+00    & 6.88e-03    & ---        & 2.99e-03 \\ \hline
SC50A       & 1.86e-02    & 2.34e-03    & 6.37e-02   & 1.75e-03 \\ \hline
SC50B       & 3.21e-02    & 2.91e-03    & 6.03e-02   & 1.28e-03 \\ \hline
SC105       & 3.73e-02    & 7.72e-03    & 7.71e-02   & 4.42e-03 \\ \hline
SC205       & 1.00e-01    & 4.14e-02    & 1.06e-01   & 4.52e-03 \\ \hline
SCAGR7      & 4.74e-02    & 9.24e-02    & 1.06e-01   & 6.45e-03 \\ \hline
SCAGR25     & 4.11e-01    & 1.49e+00    & 2.16e-01   & 2.69e-02 \\ \hline
SCFXM1      & 1.69e+01    & ---         & 3.15e-01   & 1.19e-02 \\ \hline
SCFXM2      & 2.02e+01    & ---         & 4.93e-01   & 2.21e-02 \\ \hline
SCFXM3      & 4.09e+01    & ---         & 4.43e-01   & 3.14e-02 \\ \hline
SCORPION    & 1.10e+01    & 7.91e-02    & 1.34e-01   & 5.84e-03 \\ \hline
SCRS8       & ---         & ---         & 3.73e-01   & 1.42e-02 \\ \hline
SCSD1       & 5.17e-02    & 1.54e-02    & 6.80e-02   & 4.73e-03 \\ \hline
SCSD6       & 1.06e-01    & 7.37e-02    & 8.53e-02   & 1.10e-02 \\ \hline
SCSD8       & 1.00e+00    & 1.34e-01    & ---        & 1.71e-02 \\ \hline
SCTAP1      & 3.95e+00    & 2.34e+01    & 1.68e-01   & 8.55e-03 \\ \hline
SCTAP2      & 5.22e+01    & 6.81e+00    & 2.14e-01   & 1.99e-02 \\ \hline
SCTAP3      & 1.32e+01    & 1.08e+00    & 3.19e-01   & 2.53e-02 \\ \hline
SEBA        & 5.25e+01    & ---         & 5.30e-01   & 1.49e-02 \\ \hline
SHARE1B     & ---         & ---         & 1.44e-01   & 7.44e-03 \\ \hline
SHARE2B     & ---         & ---         & 8.04e-02   & 4.55e-03 \\ \hline
SHELL       & 8.33e-01    & 5.78e-01    & 2.91e-01   & 1.63e-02 \\ \hline
SHIP04L     & 9.41e+00    & 1.39e-01    & 1.93e-01   & 1.34e-02 \\ \hline
SHIP04S     & 2.33e-01    & 6.65e-01    & 1.54e-01   & 1.25e-02 \\ \hline
SHIP08L     & 5.75e-01    & 1.16e+00    & 2.30e-01   & 1.69e-02 \\ \hline
SHIP08S     & 3.60e+00    & 3.57e-01    & 2.11e-01   & 1.07e-02 \\ \hline
SHIP12L     & 4.20e+00    & 1.06e+01    & 2.70e-01   & 2.51e-02 \\ \hline
SHIP12S     & 1.21e+00    & 1.38e+00    & 2.47e-01   & 1.28e-02 \\ \hline
SIERRA      & ---         & ---         & ---        & 2.74e-02 \\ \hline
STAIR       & 1.29e+01    & ---         & 2.54e-01   & 1.74e-02 \\ \hline
STANDATA    & 2.70e+00    & 2.51e-01    & 2.78e-01   & 9.22e-03 \\ \hline
STANDGUB    & 2.60e+00    & 2.75e-01    & 2.87e-01   & 8.69e-03 \\ \hline
STANDMPS    & 2.48e+00    & 1.99e-01    & 2.62e-01   & 1.24e-02 \\ \hline
STOCFOR1    & ---         & ---         & ---        & 3.51e-03 \\ \hline
STOCFOR2    & ---         & ---         & 3.08e-01   & 2.99e-02 \\ \hline
STOCFOR3    & ---         & ---         & 2.86e+00   & 2.29e-01 \\ \hline
TRUSS       & 3.90e+01    & 5.04e+01    & 4.30e-01   & 8.60e-02 \\ \hline
TUFF        & 5.48e+00    & ---         & ---        & 1.17e-02 \\ \hline
VTP{\_}BASE & ---         & ---         & ---        & 4.69e-03 \\ \hline
WOOD1P      & ---         & 2.90e+02    & 6.85e-01   & 4.32e-02 \\ \hline
WOODW       & ---         & ---         & 8.57e-01   & 5.21e-02 \\ \hline
\end{tabular} \caption{CPU time (in seconds) of all four direct solvers ($\epsilon = 10^{-5}$) on NETLIB: part 2.} \label{Table:LP-NETLIB-Direct-4}
\end{table}

\begin{table}[t] \scriptsize
\centering
\begin{tabular}{|r|r|r|r|}\hline
\textbf{Problem} & \textsf{ABIP-CG} & \textsf{SCS-CG} & \textsf{DSDP-CG} \\\hline
25FV47          & 2.73e+01 & 1.13e+02 & 1.01e+00 \\ \hline
80BAU3B         & 1.27e+01 & 5.47e+00 & 3.21e+01 \\ \hline
ADLITTLE        & 3.43e-02 & 8.65e-02 & 1.28e-02 \\ \hline
AFIRO           & 4.17e-03 & 1.43e-03 & 2.42e-03 \\ \hline
AGG             & 1.85e+00 & ---      & 1.69e-01 \\ \hline
AGG2            & 2.49e+00 & 1.48e+02 & 2.24e-01 \\ \hline
AGG3            & 2.03e+00 & 2.88e+01 & 2.36e-01 \\ \hline
BANDM           & 2.33e+00 & 1.67e+01 & 7.53e-02 \\ \hline
BEACONFD        & 5.06e+00 & 1.33e+01 & 5.46e-02 \\ \hline
BLEND           & 2.51e-01 & 1.48e-01 & 1.18e-02 \\ \hline
BNL1            & 3.60e+00 & 5.02e+01 & 5.01e-01 \\ \hline
BNL2            & 6.31e+00 & 3.76e+01 & 6.22e+00 \\ \hline
BOEING1         & 7.22e-01 & 6.45e+01 & 3.25e-01 \\ \hline
BOEING2         & 9.02e-01 & 8.02e-01 & 8.74e-02 \\ \hline
BORE3D          & 5.17e+01 & 2.31e+02 & 4.62e-02 \\ \hline
BRANDY          & 1.06e+01 & 5.95e+01 & ---      \\ \hline
CAPRI           & 7.20e+00 & ---      & 1.31e-01 \\ \hline
CRE{\_}A        & 1.33e+00 & 5.01e+00 & 1.18e+01 \\ \hline
CRE{\_}B        & 4.94e+01 & 2.50e+02 & ---      \\ \hline
CRE{\_}C        & 2.31e+00 & 6.09e+00 & 1.13e+01 \\ \hline
CRE{\_}D        & 7.20e+01 & 1.63e+02 & ---      \\ \hline
CYCLE           & 2.18e+03 & 2.30e+03 & 5.01e+00 \\ \hline
CZPROB          & 3.37e+00 & 7.70e+00 & 1.09e+00 \\ \hline
D2Q06C          & 4.79e+03 & ---      & 1.09e+01 \\ \hline
D6CUBE          & 2.35e+01 & 1.42e+01 & 3.01e+00 \\ \hline
DEGEN2          & 2.01e-01 & 2.19e-01 & 1.56e-01 \\ \hline
DEGEN3          & 3.58e+00 & 4.26e+00 & 3.26e+00 \\ \hline
DFL001          & ---      & 3.54e+01 & 8.35e+01 \\ \hline
E226            & 9.34e+00 & 9.64e+01 & 1.21e-01 \\ \hline
ETAMACRO        & 1.87e+00 & 1.64e+01 & 2.01e-01 \\ \hline
FFFFF800        & 5.65e+02 & ---      & 4.29e-01 \\ \hline
FINNIS          & 7.00e-01 & 1.18e+00 & 2.14e-01 \\ \hline
FIT1D           & 1.29e+02 & ---      & 1.35e+00 \\ \hline
FIT1P           & 5.82e-01 & 2.86e+00 & 1.61e+00 \\ \hline
FIT2D           & 6.94e+03 & ---      & 1.42e+02 \\ \hline
FIT2P           & 4.81e+00 & 1.95e+01 & 1.86e+03 \\ \hline
FORPLAN         & ---      & 1.17e+01 & 1.01e-01 \\ \hline
GANGES          & 2.20e+00 & 4.53e+00 & 8.97e-01 \\ \hline
GFRD{\_}PNC     & 2.31e-01 & 4.38e-01 & 3.10e-01 \\ \hline
GREENBEA        & 3.55e+02 & 1.44e+03 & 1.25e+01 \\ \hline
GREENBEB        & 2.15e+01 & 1.82e+02 & 1.27e+01 \\ \hline
GROW7           & 1.60e-01 & 1.06e-01 & 7.32e-02 \\ \hline
GROW15          & 4.02e-01 & 2.01e-01 & 2.95e-01 \\ \hline
GROW22          & 6.03e-01 & 2.94e-01 & ---      \\ \hline
ISRAEL          & 5.21e+00 & ---      & 8.51e-02 \\ \hline
KB2             & 3.58e+00 & 2.89e+01 & 6.18e-03 \\ \hline
KEN{\_}7        & 3.58e+00 & 1.38e+00 & 5.70e+00 \\ \hline
KEN{\_}11       & 8.73e+01 & 1.88e+01 & ---      \\ \hline
KEN{\_}13       & 2.37e+02 & 7.82e+01 & ---      \\ \hline
KEN{\_}18       & 4.21e+03 & 4.32e+02 & ---      \\ \hline
LOTFI           & 1.21e+01 & ---      & 3.91e-02 \\ \hline
MAROS           & ---      & ---      & 1.14e+00 \\ \hline
MAROS{\_}R7     & 2.71e+00 & 1.28e+00 & 1.91e+01 \\ \hline
MODSZK1         & ---      & 5.58e-01 & 3.85e-01 \\ \hline
NESM            & 3.23e+00 & 4.97e+00 & 3.99e+00 \\ \hline
OSA{\_}07       & 3.40e+01 & 3.23e+00 & 2.43e+01 \\ \hline
OSA{\_}14       & 1.21e+02 & 8.80e+01 & 1.35e+02 \\ \hline
\end{tabular}\caption{CPU time (in seconds) of all three indirect solvers ($\epsilon = 10^{-3}$) on NETLIB: part 1.} \label{Table:LP-NETLIB-Indirect-1}
\end{table}

\begin{table}[t] \scriptsize
\centering
\begin{tabular}{|r|r|r|r|r|r|}\hline
\textbf{Problem} & \textsf{ABIP-CG} & \textsf{SCS-CG} & \textsf{DSDP-CG} \\\hline
OSA{\_}30       & 1.58e+03 & 1.36e+02 & 5.47e+02 \\ \hline
OSA{\_}60       & 1.36e+03 & 4.31e+02 & ---      \\ \hline
PDS{\_}02       & 4.40e+00 & 1.08e+00 & 1.35e+01 \\ \hline
PDS{\_}06       & 3.15e+01 & 5.61e+00 & 2.90e+02 \\ \hline
PDS{\_}10       & 7.21e+01 & 1.53e+01 & 1.13e+03 \\ \hline
PDS{\_}20       & 2.55e+02 & 1.18e+02 & 7.43e+03 \\ \hline
PEROLD          & ---      & ---      & 1.06e+00 \\ \hline
PILOT           & 3.27e+01 & ---      & 1.67e+01 \\ \hline
PILOT4          & 4.49e+02 & ---      & 6.65e-01 \\ \hline
PILOT87         & 1.21e+03 & ---      & 3.92e+01 \\ \hline
PILOT{\_}JA     & ---      & ---      & 2.88e+00 \\ \hline
PILOT{\_}WE     & 3.23e+01 & 4.74e+02 & 1.72e+00 \\ \hline
PILOTNOV        & ---      & ---      & 2.82e+00 \\ \hline
QAP8            & 1.63e-01 & 4.10e-02 & 4.52e-01 \\ \hline
QAP12           & 6.09e-01 & 1.19e+00 & 1.66e+01 \\ \hline
QAP15           & 1.97e+00 & 3.39e+00 & 1.27e+02 \\ \hline
RECIPE          & 6.89e-02 & 1.16e-02 & 1.89e-02 \\ \hline
SC50A           & 1.48e-02 & 3.01e-03 & 4.60e-03 \\ \hline
SC50B           & 4.39e-02 & 2.81e-03 & 4.60e-03 \\ \hline
SC105           & 2.17e-02 & 1.82e-02 & 1.77e-02 \\ \hline
SC205           & 2.25e-01 & 1.49e-01 & 2.97e-02 \\ \hline
SCAGR7          & 6.17e-02 & 3.25e-02 & 1.61e-02 \\ \hline
SCAGR25         & 3.56e-01 & 2.49e-01 & 8.67e-02 \\ \hline
SCFXM1          & 2.64e+01 & ---      & 1.19e-01 \\ \hline
SCFXM2          & 4.73e+01 & ---      & 4.90e-01 \\ \hline
SCFXM3          & 6.52e+01 & ---      & 8.91e-01 \\ \hline
SCORPION        & 3.85e-01 & 2.29e-01 & 7.42e-02 \\ \hline
SCRS8           & 1.17e+00 & 8.02e+00 & 2.38e-01 \\ \hline
SCSD1           & 1.12e-01 & 3.59e-02 & 1.87e-02 \\ \hline
SCSD6           & 2.77e-01 & 3.68e-01 & 5.96e-02 \\ \hline
SCSD8           & 9.59e-01 & 1.48e+00 & 1.55e-01 \\ \hline
SCTAP1          & 6.77e-01 & 3.57e+00 & 6.83e-02 \\ \hline
SCTAP2          & 3.69e+00 & 2.14e+00 & 8.08e-01 \\ \hline
SCTAP3          & 3.60e+00 & 2.58e+00 & 1.55e+00 \\ \hline
SEBA            & 1.45e+01 & ---      & 1.16e+00 \\ \hline
SHARE1B         & 2.20e+01 & ---      & 3.45e-02 \\ \hline
SHARE2B         & 8.54e+00 & ---      & 1.67e-02 \\ \hline
SHELL           & 4.35e-01 & 1.72e-01 & 2.80e-01 \\ \hline
SHIP04L         & 5.14e-01 & 3.35e-01 & 2.92e-01 \\ \hline
SHIP04S         & 5.02e-01 & 2.34e-01 & 1.43e-01 \\ \hline
SHIP08L         & 1.45e+00 & 9.66e-01 & 1.08e+00 \\ \hline
SHIP08S         & 6.14e-01 & 8.80e-01 & 3.22e-01 \\ \hline
SHIP12L         & 2.83e+00 & 3.47e+00 & 1.33e+00 \\ \hline
SHIP12S         & 9.11e-01 & 9.49e-01 & 2.99e-01 \\ \hline
SIERRA          & 1.48e+03 & ---      & 4.09e+00 \\ \hline
STAIR           & 9.46e-01 & 1.45e+01 & 1.43e-01 \\ \hline
STANDATA        & 1.80e+00 & 8.71e-01 & 2.52e-01 \\ \hline
STANDGUB        & 2.30e+00 & 7.72e-01 & 2.69e-01 \\ \hline
STANDMPS        & 2.85e+00 & 1.07e+00 & 3.84e-01 \\ \hline
STOCFOR1        & ---      & ---      & 1.27e-02 \\ \hline
STOCFOR2        & ---      & ---      & 3.06e+00 \\ \hline
STOCFOR3        & ---      & ---      & 1.94e+02 \\ \hline
TRUSS           & 2.32e+01 & 4.53e+01 & 1.43e+00 \\ \hline
TUFF            & 1.49e+00 & ---      & 2.77e-01 \\ \hline
VTP{\_}BASE     & ---      & ---      & 5.42e-02 \\ \hline
WOOD1P          & ---      & 4.05e+00 & ---      \\ \hline
WOODW           & ---      & ---      & 7.22e+00 \\ \hline
\end{tabular} \caption{CPU time (in seconds) of all three indirect solvers ($\epsilon = 10^{-3}$) on NETLIB: part 2.} \label{Table:LP-NETLIB-Indirect-2}
\end{table}

\end{document}